\newcommand{\Real}{\mathbb{R}} 
\newcommand{\Complex}{\mathbb{C}}
\newcommand{\Integer}{\mathbb{Z}} 
\newcommand\bydef{\stackrel{\mathclap{\normalfont\mbox{\tiny def}}}{=}}
\newtheorem{theorem}{Theorem}[section]
\newtheorem{corollary}[theorem]{Corollary}
\newtheorem{proposition}[theorem]{Proposition}
\newtheorem{lemma}[theorem]{Lemma}
\newtheorem{conjecture}[theorem]{Conjecture}
\newtheorem{prop/def}[theorem]{Definition/Proposition}
\newtheorem{question}[theorem]{Open question}
\theoremstyle{definition}
\newtheorem{definition}[theorem]{Definition}
\newtheorem{example}[theorem]{Example}
\theoremstyle{remark}
\newtheorem{remark}[theorem]{Remark}
\numberwithin{theorem}{section}
\numberwithin{equation}{section}
\title{Origami: real structure, enumeration and quantum modularity}
\author[R.~Fesler]{Raphaël Fesler}
\address{R.~Fesler: Guangdong Technion-Israel Institute of Technology, Daxue lu 241, 515063, Shantou, Guangdong,
P.R. of China}
\email{raphael.fesler@gtiit.edu.cn}
\author[P.~Zograf]{Peter Zograf}
\address{P.~Zograf: St. Petersburg Department of Steklov Mathematical Institute, St.~Petersburg 191023, Russia
}
\email{zograf@pdmi.ras.ru}
\thanks{This work is supported by the Russian Science Foundation  (project 25-11-00058)}
\begin{document}
	
	\begin{abstract}
	We define real origami (that is, origami equipped with a real structure) and enumerate them using the combinatorics of zonal polynomials. We explicitly express in terms of sums of divisors the numbers of genus 2 real origami with 2 simple zeros and the numbers of genus 3 real origami with 2 double zeros showing that their 
    generating functions are quantum modular forms. Furthermore, 
    we show that by replacing zonal polynomials with Schur polynomials we can effectively count the classical (complex) origami. 
    As a by-product, we establish a connection between classical origami and a specific class of double Hurwitz numbers.
    Finally, we discuss some conjectures and open questions involving Jack functions, quantum modular forms, and integrable hierarchies.
        
	\end{abstract}
	
	\maketitle
	\tableofcontents
	
	\section{Introduction}
	
	An {\it origami} $X$ is a square-tiled surface of a special kind that can be constructed as follows. Take $n$ identical unit squares $S=[0,1] \times [0,1]$ and label their opposite sides as "top"--"bottom" and "left"--"right". Glue the squares together by identifying the top and bottom sides and the left and right sides, reversing the orientation. As a result, we get a compact surface without boundary that carries a natural complex structure and a holomorphic Abelian differential. We will call such an object a {\it complex} origami to distinguish it from a
    {\it real} origami that we will define a little later.
	
	Recall that a complex origami is conveniently represented by two permutations $h,v$ in the symmetric group $S_n$ (where $h$ stands for "horizontal" and $v$ stands for "vertical"). Moreover, each origami $X$ is a ramified cover of the torus $T=\mathbb{C}/\mathbb{Z}\oplus\!\sqrt{-1}\,\mathbb{Z}$, where the projection $f:X\to T$ is holomorphic with only one critical value at $0\in T$. The ramification profile of $f$ over $0\in T$ is described by the cycle structure of the commutator $[h,v]=hvh^{-1}v^{-1}\in S_n$. Clearly, $X$ is connected if and only if $h$ and $v$ generate a subgroup in $S_n$ that acts transitively on the set $\{1,\ldots,n\}$. 
    (See \cite{Ker09,Zorich2002} and the references therein for an accessible introduction.)
	
	In the present paper we will predominantly deal with origami admitting an 
    anti-holomorphic involution. We consider two kinds of such origami.
    The first one, that we call {\it real origami}, carries a fixed point free anti-holomorphic
    involution that covers the ordinary complex conjugation $z\mapsto \bar{z}$ on the torus $T$. The second one, that we call {\it mirror symmetric}, or simply {\it mirror origami}, carries a fixed point free anti-holomorphic involution that covers the reflection 
    $(x,y)\mapsto (y,x),\;z=x+\sqrt{-1}y\in T$ (precise definitions are below).
    As we will see, the real and mirror origami, though different, are in bijection.
    
    Effective origami enumeration is a long-standing combinatorial problem. The main objective of the present paper is to propose an independent 
    approach to real origami count based on the combinatorics of the symmetric group (more precisely, zonal polynomials), see Section \ref{Section:Zonal}.
    In the special cases of genus 2 and 2 simple zeros, as well as genus 3 and 2 
    double zeros, we explicitly express the number of real origami of a given degree
    (= the number of squares) in terms of divisor sums, or Eisenstein series 
    (Theorem \ref{Th:Genfunction} and Section \ref{g3}). The algorithm of the real origami count also applies to the case of complex origami (with zonal polynomials replaced by Schur polynomials), see Section \ref{Section:ComplexOrigami}. Finally, Section \ref{Section:JackFunction} discusses further connections of real origami with Jack polynomials, quantum modular forms, and integrable hierarchies. Some numerical data
    is given in Appendices and Tables at the end of the paper.

    \section{Real origami and zonal polynomials}\label{Section:Zonal}
    \subsection{Definitions and preliminaries}    
    We begin with two definitions of real origami and show their equivalence.
	
	\begin{definition}[geometric]\label{Def:OrigamiGeo}
		An origami $f:X\to T$ is called {\it real} if the following conditions are satisfied: 
		\begin{itemize}
			\item $f:X\to T$ is a ramified cover of $T$ branching over $0\in T$; 
			\item there exist an anti-holomorphic fixed point free involution $\phi:X\to X$ 
			such that $f\circ\phi={\rm conj}\circ f$, where ${\rm conj}:T\to T$ is the complex conjugation,
            and the factor space $X/\phi$ is connected;
			\item the preimage of the meridian $\beta=\{x=1/2\}\subset T$ splits into disjoint union of pairs of 
			non-intersecting cycles $f^{-1}(\beta)=(\sigma_1,\bar\sigma_1,\ldots,\sigma_m,\bar\sigma_m)$, where $\bar\sigma_i=\phi(\sigma_i^{-1}),\;i=1,\ldots,m$.
		\end{itemize}
	\end{definition}
	
	\begin{definition}
		[combinatorial]\label{Def:OrigamiCombi}
		A pair of permutations $h,v\in S_{2n}$ defines a real origami if the following conditions are
		satisfied:
		\begin{itemize} 
			\item there exist an involution without fixed points $\tau\in S_{2n}$ such that 
			$\tau h\tau h^{-1}=\tau v\tau v=id$;
                \item the cycle decomposition of $v$ consist of even number of cycles 
			$\gamma_1\bar\gamma_1\ldots\gamma_{m}\bar\gamma_m$, where $\bar\gamma_i = 
			\tau\gamma_i^{-1}\tau,\;i=1,\ldots,m$.
                \item the group generated by $h, v$ and $\tau$ acts transitively on the set
			$\{1,\bar{1},\ldots,n,\bar{n}\}$;
		\end{itemize}
	\end{definition}
    
	\begin{remark}
    The numbers that count real origami provide a version of twisted elliptic Hurwitz numbers, 
    cf. \cite{HM24}.
	   The combinatorial model described by Definition \ref{Def:OrigamiCombi} was introduced in \cite{HM24}
       in a different context of tropical geometry. The next proposition answers, in particular, a question from \cite{HM24}
       about a geometric counterpart of Definition \ref{Def:OrigamiCombi}.
        \end{remark}
    
	\begin{proposition}\label{Prop:DefEqui}
		Definitions \ref{Def:OrigamiGeo} and \ref{Def:OrigamiCombi} are equivalent.
		\begin{proof}
			Let us show that Definition \ref{Def:OrigamiGeo} implies Definition \ref{Def:OrigamiCombi}.
			Take $b=(1/2,1/2)\in T$ as the base point, and denote by $\alpha$ and $\beta$ the loops 
			that freely generate the fundamental group $\pi_1(T\setminus\{0\},b)$, see Fig.  \ref{Fig.Loops}.
			\begin{figure}[H]
				\includegraphics[scale=.6]{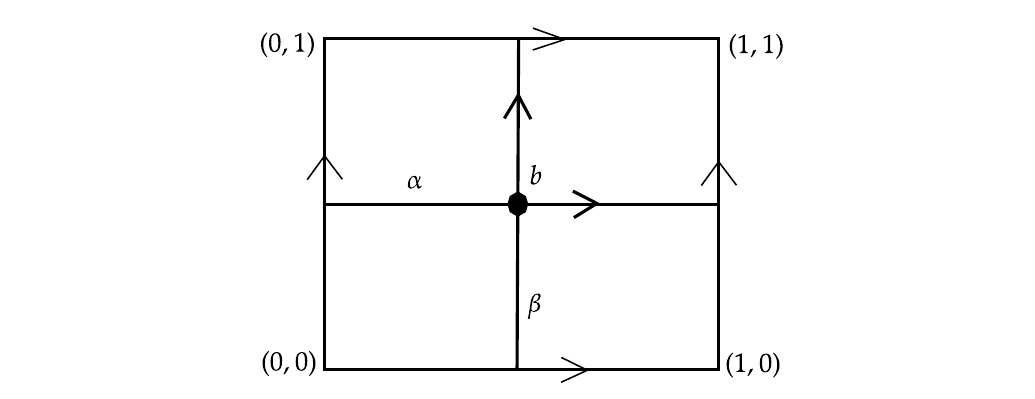}  
				\caption{Loops on the torus.}\label{Fig.Loops}
			\end{figure} 
			
			Consider the monodromy representation
			\[\rho:\pi_1(T\setminus\{0\},b)\to S_{2n},\]
			and put $h=\rho(\alpha),\;v=\rho(\beta)$. Notice that the preimage $f^{-1}(b)\in X$ consists of $2n$ distinct points 
			$a_1,\ldots,a_{2n}$ 
			that are pairwise interchanged by $\phi$, thus giving
			an involution $\tau$ in $S_{2n}$ without fixed points. It is an easy check that the triple
			$(h,v,\tau)$ satisfies all the conditions of Definition \ref{Def:OrigamiCombi}.
			
			Now let us show that Definition \ref{Def:OrigamiCombi} implies Definition \ref{Def:OrigamiGeo}. There are two cases to consider.

            \emph{Case 1}: The group generated by $h,\,v$ acts transitively on the set $\{1,\bar{1},\ldots,n,\bar{n}\}$. Then, using $h$ and $v$ we construct a connected ramified cover $f:X\to T$ with the prescribed ramification 
			profile over $0\in T$. To define the anti-holomorphic involution $\phi:X\to X$, take a regular point $x\in X$ and choose a path $s$ connecting $f(x)$ 
			with the base point $b=(1/2,1/2)\in T$. Lift the path $s$ to $X$ starting at $x$; it will end
			at some point $a_i\in f^{-1}(b)$. Consider the path ${\rm conj}\circ s^{-1}$ that is reverse and complex conjugate to $s$, and lift it to $X$ starting at $\tau a_i\in f^{-1}(b)$. It will end at some point in $f^{-1}(f(x))$ that we denote by $\phi(x)$. We need to verify that $\phi(x)$ is
			independent of the path $s$. This is easy to see when $f(x)$ belongs to the interior of the unit square $S$, in which case a path connecting $f(x)$ with $b$ inside $S$ is unique up to 
			homotopy. Let us show that $\phi(x)$ extends smoothly to the entire surface $X$; to do that, it is
			sufficient to consider $s=\alpha$ and $s=\beta$. In the first case lift the path $\alpha$ to $X$
			starting at some $a_i$; it will end at $h(a_i)\in f^{-1}(b)$. With $\tau h(a_i)$ as a start point, lift the reverse and complex conjugate of $\alpha$ to $X$; it will end at 
			$h^{-1}\tau h(a_i)=\tau a_i$, i.e. $\phi(a_i)=\tau a_i$ (here we used that ${\rm conj}(\alpha)=\alpha$ and $h^{-1}\tau h(a_i)=\tau$). In the second case, a similar argument 
			also yields $\phi(a_i)=\tau a_i$, thus proving that the map $\phi:X\to X$ is well defined.
			By construction, $\phi$ is anti-holomorphic and satisfies the relation 
			$f\circ\phi={\rm conj}\circ f$. Since $\tau$ is an involution, $\phi\circ\phi=\rm{id}$.
			Moreover, the condition that $\tau$ has no fixed points implies $\phi(x)\neq x$ for all $x\in X$.
			It is straightforward to verify the last condition in Definition \ref{Def:OrigamiGeo}
			provided it holds for Definition \ref{Def:OrigamiCombi}.

            {\emph Case 2}: The group generated by $h,\,v$ does not act transitively on the set $\{1,\bar{1},\ldots,n,\bar{n}\}$. In this case the covering surface $X$ has two connected components
            $X=Y\cup\overline{Y}$. Considerations similar to those of Case 1 show that these components are identified by an anti-holomorphic involution $\phi:Y\to \overline{Y}$, where $\overline{Y}$ is the complex conjugate of $Y$.
		\end{proof}
	\end{proposition}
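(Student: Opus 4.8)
The plan is to set up the standard dictionary between branched covers of the torus $T$ and pairs of permutations, and to read off what an anti-holomorphic involution $\phi$ of the cover becomes on the combinatorial side.

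For the implication Definition \ref{Def:OrigamiGeo} $\Rightarrow$ Definition \ref{Def:OrigamiCombi} I would take the base point $b=(1/2,1/2)$, which is fixed by ${\rm conj}$, together with the horizontal loop $\alpha$ and the meridian $\beta=\{x=1/2\}$ as free generators of $\pi_1(T\setminus\{0\},b)$. Writing $\rho$ for the monodromy representation into $S_{2n}$, put $h=\rho(\alpha)$, $v=\rho(\beta)$, and let $\tau\in S_{2n}$ be the permutation of the fibre $f^{-1}(b)$ induced by $\phi$; it is a fixed-point-free involution precisely because $\phi$ is one. Since $\phi$ carries a lift of a path $s$ to a lift of ${\rm conj}\circ s$, the relation $f\circ\phi={\rm conj}\circ f$ gives $\tau\,\rho(\gamma)\,\tau^{-1}=\rho({\rm conj}_*\gamma)$ for every $\gamma$; as ${\rm conj}$ fixes $\alpha$ and reverses $\beta$, this is exactly $\tau h\tau h^{-1}=\tau v\tau v={\rm id}$, the first bullet of Definition \ref{Def:OrigamiCombi}. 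The second bullet I would extract from the meridian bullet of Definition \ref{Def:OrigamiGeo}: the components of $f^{-1}(\beta)$ are precisely the cycles of $v$, the involution $\phi$ permutes these circles reversing their orientation (because ${\rm conj}$ reverses $\beta$), and it fixes none of them (an orientation-reversing self-homeomorphism of a circle has a fixed point, so a fixed circle would contradict freeness of $\phi$), so the cycles group into pairs $\gamma_i,\bar\gamma_i$ with $\bar\gamma_i=\tau\gamma_i^{-1}\tau$. Finally, connectedness of $X/\phi$ forces $X$ to consist of a single $\phi$-invariant component or of two components interchanged by $\phi$, which is equivalent to transitivity of $\langle h,v,\tau\rangle$.

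For the converse I would build $f:X\to T$ from $(h,v)$ in the usual way and define $\phi$ at a regular point $x$ as follows: choose a path $s$ from $f(x)$ to $b$, lift $s$ starting at $x$ to an endpoint $a_i\in f^{-1}(b)$, then lift ${\rm conj}\circ s^{-1}$ starting at $\tau a_i$, and declare its endpoint to be $\phi(x)$. The one step carrying content is independence of $s$: two choices of $s$ differ by an element of $\pi_1(T\setminus\{0\},b)$, so it suffices to check invariance under $\alpha$ and $\beta$, where the two recomputations of $\phi(a_i)$ come out as $h^{-1}\tau h(a_i)$ and $v\tau v(a_i)$, both equal to $\tau a_i$ by virtue of $\tau h\tau h^{-1}={\rm id}$ and $\tau v\tau v={\rm id}$ respectively. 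On the interior of a unit square the connecting path is unique up to homotopy, so $\phi$ is continuous there and then, by a removable-singularity argument, extends anti-holomorphically across the branch points. By construction $\phi$ covers ${\rm conj}$ and squares to the identity (because $\tau$ does); that it is fixed-point-free uses that $\tau$ has no fixed points, together with the fact that the fixed locus of ${\rm conj}$ consists of two circles, one of which is $\{y=1/2\}\ni b$. If $\langle h,v\rangle$ acts transitively then $X$ is connected and so is $X/\phi$; otherwise $\tau$, which normalizes $\langle h,v\rangle$, must act transitively on the $\langle h,v\rangle$-orbits, so there are exactly two of them and $X=Y\cup\overline Y$ with $\phi:Y\to\overline Y$ exhibiting $\overline Y$ as the complex conjugate of $Y$, whence $X/\phi\cong Y$ is again connected; in both cases the cycle condition on $v$ is precisely the meridian bullet.

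The step I expect to be the main obstacle is the well-definedness of $\phi$ in the converse together with its behaviour over the branch locus: one must be scrupulous about the left-versus-right convention for the monodromy action on the fibre, about the asymmetry that ${\rm conj}$ fixes $\alpha$ but reverses $\beta$ (so that the $\alpha$- and $\beta$-checks invoke the two different relations of Definition \ref{Def:OrigamiCombi}), and about the local picture near a branch point, where $f$ is modelled by $z\mapsto z^k$ and ${\rm conj}$ by $w\mapsto\bar w$, so that $\phi$ is automatically of the form $z\mapsto\zeta\bar z$ with $\zeta^k=1$ and one has to confirm that no fixed point of $\phi$ lurks over $0$. Everything else is a direct translation between Definitions \ref{Def:OrigamiGeo} and \ref{Def:OrigamiCombi}.
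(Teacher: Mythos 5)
Your proposal is correct and follows essentially the same route as the paper: the forward direction via the monodromy representation at the base point $b=(1/2,1/2)$ with $\tau$ induced by $\phi$ on the fibre, and the converse via the path-lifting construction of $\phi$ checked against the generators $\alpha$ and $\beta$, with the same two-case split according to whether $\langle h,v\rangle$ already acts transitively. You merely supply more detail at the points the paper dismisses as "an easy check" (the conjugation relation $\tau\rho(\gamma)\tau^{-1}=\rho(\mathrm{conj}_*\gamma)$ and the orientation-reversal argument pairing the cycles of $v$), which is consistent with the paper's argument.
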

	
	\begin{remark}\label{Rem:Profil}
		Since $\phi:X\to X$ is a fixed point free involution, the ramification profile of $f:X\to T$ has the form
		$\lambda=[\lambda_1^2\ldots\lambda_l^2]$ (i.e., each element of the partition $\lambda$ enters with even multiplicity). Equivalently, the commutator $[h,v]=hvh^{-1}v^{-1}$ has the same cycle structure
		$\lambda=[\lambda_1^2\ldots\lambda_l^2]$.
	\end{remark}
	
	\begin{remark}
		Denote by $\mathcal{O}_\Real^{geom}(2n)$ (resp. $\mathcal{O}_\Real^{comb}(2n)$) the set of real origami
		of degree $2n$ given by
		Definition  \ref{Def:OrigamiGeo} (resp. by Definition  \ref{Def:OrigamiCombi}). It is easy to see that the forgetful map $\mathcal{O}_\Real^{comb}(2n)\to\mathcal{O}_\Real^{geom}(2n)$ is
		independent of the labeling of the points in $f^{-1}(b)$ and therefore factors through $\pi:\mathcal{O}_\Real^{comb}(2n)/S_{2n}\to\mathcal{O}_\Real^{geom}(2n)$. It is standard to show that $\pi$ is a bijection (see \cite{CM16}). Recall that an automorphism of a cover $f$ can be identified with a permutation on $f^{-1}(b)$ yielding
		$$\text{Stab}_{S_{2n}}(h,v) = \text{Aut}_\Real (\pi(h,v)).$$
		This means that ${|\mathcal{O}_\Real^{comb}(2n)|}/{{(2n)!}}$ is the weighted number of real 
		origami (where each origami is counted with weight reciprocal to the order of its group
		of real automorphisms). 
	\end{remark}
	
	\subsection{Enumeration of real origami of genus 2}
	Here we derive an explicit formula for the number of real origami of arbitrary
	even degree $2n$ (equal to the number of squares) with 2 simple zeros following an approach of Zorich, 
	see \cite{Zorich2002}. Denote by $\mathcal{H}(\mu_1-1,\ldots,\mu_k-1)$ the moduli space of holomorphic Abelian 
	differentials with $k$ zeros of orders $\mu_1-1,\ldots,\mu_k-1$, where $\mu=[\mu_1\ldots\mu_k]$
	is a partition of $2n$. By the Riemann--Hurwitz formula, the genus of the underlying surface is then
	\[g=1+n-\frac{k}{2}\]
	(in particular, $k$ must be even).
	
    Let us proceed with simple examples. The case of Abelian covers (that is, $g=1$, or, equivalently,  $\mu=[1^{2n}]$) is well-known, see \cite{Zorich2002}. The next case $g=2$ is less trivial. The moduli space of genus 2 
	Abelian differentials consists of 2 strata --- the principal stratum $\mathcal{H}(1,1)$ and the 
	codimension 1 stratum $\mathcal{H}(2)$. Clearly, no real origami can belong to the stratum 
	$\mathcal{H}(2)$. It means that all genus 2 real origami lie in the principal stratum $\mathcal{H}(1,1)$. 
	We have	
	\begin{theorem}\label{Th:Genfunction}
		The number $N_n^\Real1 (1,1)=|\mathcal{O}_\Real^{geom}(2n)\cap\mathcal{H}(1,1)|$
        of genus 2 degree $2n$ real origami in $\mathcal{H}(1,1)$ is given by the formula
		\begin{equation}
			N_n^\Real (1,1)=\frac{1}{2}(\sigma_2(n)-\sigma_1(n))\;,
		\end{equation}
		where 
		\[\sigma_k(n) := \sum_{d|n} d^k\,\]
		and $d$ runs over the divisors of $n$. Equivalently, at the level of generating functions
\[ 
\sum_{n=1}^\infty N_n^\Real (1,1)q^n=\frac{1}{2}(E_3-E_2),         
\]
where 
\begin{align}\label{Eis}
E_k=\sum_{n=1}^\infty \sigma_{k-1}(n)q^n
\end{align}
is the $k$-th Eisenstein series, $k=2,3,\ldots$.
\end{theorem}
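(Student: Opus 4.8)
The plan is to follow Zorich's horizontal-cylinder decomposition. Every square-tiled surface is completely periodic in the horizontal direction, so a real origami $X$ in $\mathcal{H}(1,1)$ breaks up into maximal horizontal cylinders glued along horizontal saddle connections joining its two zeros $A,B$; by Remark \ref{Rem:Profil} the ramification profile over $0\in T$ is $[2^2,1^{2n-4}]$, so there really are exactly two zeros, and since $\phi$ is fixed-point free it must interchange $A$ and $B$. Because ${\rm conj}\colon z\mapsto\bar z$ preserves the horizontal foliation of $T$ (reversing only the vertical direction), $\phi$ preserves the orientation along horizontal leaves and permutes the horizontal cylinders; on any cylinder it fixes it acts by the half-turn reflection $(x,y)\mapsto(x+\tfrac{w}{2},\,h-y)$ — the unique fixed-point-free anti-holomorphic involution of a flat cylinder of circumference $w$ and height $h$ that covers ${\rm conj}$ — and it carries any horizontal saddle connection to a \emph{different} one, since a set-wise fixed saddle connection would have to be preserved with its orientation while its two endpoints $A,B$ were exchanged, impossible for a segment.

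The first main step is to enumerate the horizontal separatrix diagrams of $\mathcal{H}(1,1)$ — there is a one-cylinder type, two-cylinder types, and a three-cylinder type — and for each to determine whether it supports a fixed-point-free anti-holomorphic involution that interchanges the two zeros and is compatible with Definition \ref{Def:OrigamiCombi}, in particular with the even pairing of the vertical cylinders. The outcome of this analysis is that the only surviving configuration is the two-cylinder diagram in which $\phi$ swaps the two cylinders. There $\phi$ forces the two cylinders to share a common circumference $w$ and a common height $h$ and determines the gluing of one cylinder from that of the other; what is left free is the way a boundary circle of the cylinder is cut into its two saddle connections, of integer lengths $\ell_1,\ell_2\ge 1$ with $\ell_1+\ell_2=w$, together with one relative twist $t\in\mathbb Z/w\mathbb Z$.

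It then remains to count lattice points. The surface has $2wh=2n$ squares, hence $wh=n$, so $w$ ranges over the divisors of $n$ with $h=n/w$. For a fixed $w$ there are $w-1$ admissible pairs $(\ell_1,\ell_2)$ and $w$ values of $t$; an order-two symmetry of the gluing data (equivalently, a canonical order-two real automorphism carried by every such origami, which produces the overall factor $\tfrac12$) identifies these $w(w-1)$ data in pairs, so the diagram of circumference $w$ contributes $\binom{w}{2}$ real origami. Summing,
\[ N_n^\Real(1,1)=\sum_{w\mid n}\binom{w}{2}=\tfrac12\sum_{w\mid n}(w^2-w)=\tfrac12\bigl(\sigma_2(n)-\sigma_1(n)\bigr), \]
and passing to generating functions and comparing with \eqref{Eis} gives $\sum_{n\ge1}N_n^\Real(1,1)\,q^n=\tfrac12(E_3-E_2)$.

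The main obstacle is the classification together with the precise bookkeeping of parameters and automorphisms. One has to genuinely rule out a real structure on the one-cylinder diagram, on the three-cylinder diagram, and on the two-cylinder configurations other than the surviving one (including those in which $\phi$ fixes a cylinder): the subtle point is that an \emph{isolated} fixed cylinder does admit the half-turn reflection described above, so the obstruction appears only when one tries to make that reflection commute with the gluing permutation of the saddle connections and with the vertical-cylinder pairing. Equally delicate is pinning down the exact ranges of $(\ell_1,\ell_2,t)$ and the order-two identification, since a slip there rescales the answer by a constant. Everything else is a routine lattice-point count in the cone of cylinder parameters.
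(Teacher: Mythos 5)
Your strategy coincides with the paper's: classify the separatrix/cylinder configurations of $\mathcal{H}(1,1)$ compatible with a real structure, observe that only the two-cylinder configuration in which $\phi$ interchanges the two cylinders survives, and then count gluing data to obtain $\tfrac12\sum_{\ell\mid n}\ell(\ell-1)$. The final lattice-point count matches the paper exactly ($\ell-1$ splittings of the circumference into two saddle connections, $\ell$ twists, the divisibility $\ell\mid n$, and an overall factor $\tfrac12$), so that part is in order.

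The genuine gap is the one you flag yourself: the elimination of the one-cylinder diagram, the three-cylinder diagram, and the remaining two-cylinder diagram is asserted ("the outcome of this analysis is\dots") but never carried out, and that elimination is the actual mathematical content of the proof. The paper disposes of it with two concrete reasons that your write-up does not supply: types I and III are excluded because they have an \emph{odd} number of vertical cylinders, which contradicts the requirement of Definition \ref{Def:OrigamiGeo} that the components of $f^{-1}(\beta)$ split into disjoint pairs interchanged by $\phi$ (note this is a parity argument in the \emph{vertical} direction, whereas you set up the horizontal decomposition, where --- as you yourself observe --- a cylinder fixed by $\phi$ is not immediately excluded); and type IIb is excluded because it is incompatible with assigning positive lengths to the saddle connections. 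Until an actual argument replaces the assertion, the proof is incomplete. A second, smaller defect: your parenthetical that the factor $\tfrac12$ comes ``equivalently'' from a canonical order-two real automorphism carried by every such origami cannot be right as stated, since the theorem computes the literal cardinality $|\mathcal{O}_\Real^{geom}(2n)\cap\mathcal{H}(1,1)|$ rather than an automorphism-weighted count; the correct justification is the first one you offer (and the one the paper gives): the parametrization by $(\ell_1,\ell_2,t)$ hits each surface twice, owing to the rotational symmetry of the quotient M\"obius graph.
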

	\begin{proof}
		After A. Zorich \cite{Zorich2002}, cf. also \cite{DGM2021,GM2020}, we use separatrix diagrams and cylinder decompositions to count 
		real origami in strata of Abelian differentials. Roughly speaking, a separatrix diagram is the closure
		of the union of non-closed horizontal (or vertical) trajectories of a holomorphic Abelian differential. The 
		complement to a separatrix diagram is a disjoint union of flat cylinders, each foliated into circles.
		In the case of $\mathcal{H}(1,1)$, there are 4 admissible separatrix diagrams (i.e., realizable by a complex origami); see Figure 
        \ref{Fig.Separatrix1} below (details can be found in \cite{GM2020,Zorich2023}).
        
       \begin{figure}[h]
\begin{center}
\includegraphics[width=4in]{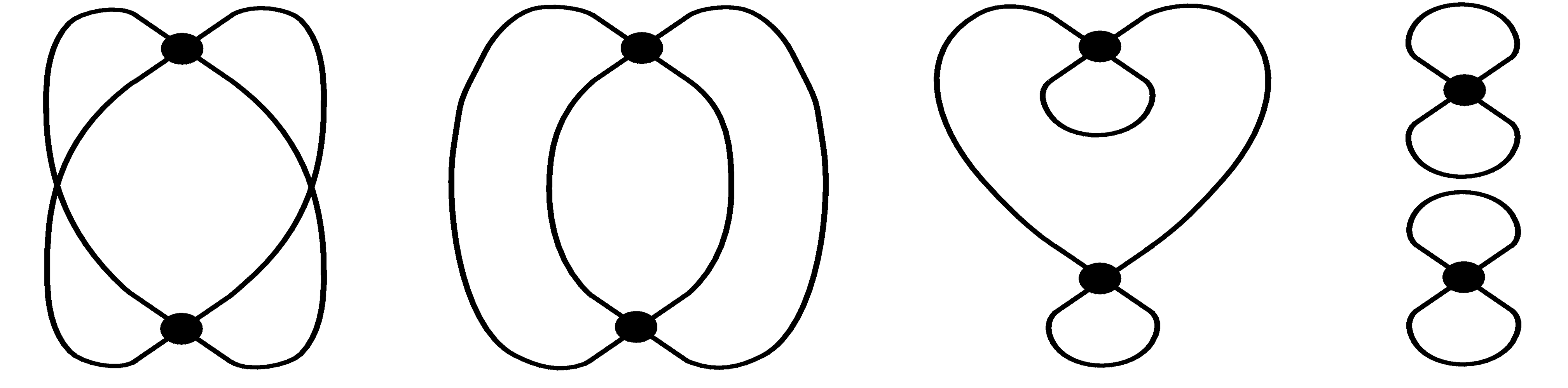}
\caption{From left to right: separatrix diagrams with one cylinder (type I), with
two cylinders (type IIa and type IIb), and with three cylinders (type III).}
\end{center}
        \label{Fig.Separatrix1}
	\end{figure}
 
        Out of these four diagrams only type IIa is compatible with a real structure. Indeed, types I and III are not realizable because they have an odd number of vertical cylinders, cf. Definition \ref{Def:OrigamiGeo}, and type IIb is not compatible with assignment of positive lengths to saddle connections (singular trajectories).
        
        \begin{figure}[h]
\begin{center}
\includegraphics[width=4in]{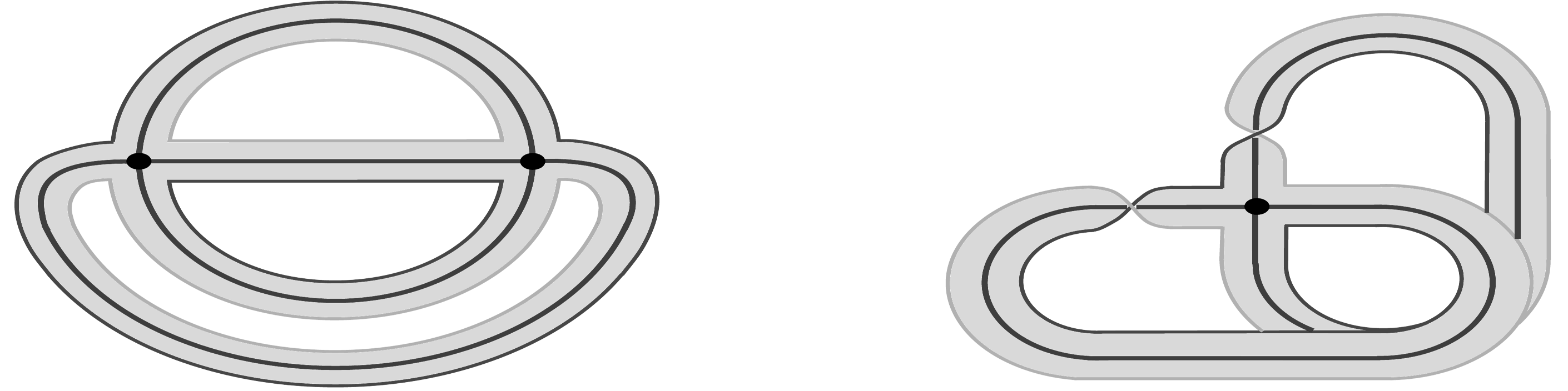}
\caption{2-cylinder separatrix diagram invariant with respect to the anti-holomorphic involution
(left) and its 1-cylinder non-orientable factor (right).}
\end{center}
        \label{Fig.Separatrix}
	\end{figure}
		
		Let us factorize separatrix diagram IIa by the anti-holomorphic involution. We  get a M\"obius
		graph with one vertex, two twisted loops and two boundary cycles identified with two boundary
		components of the cylinder. This gives us a non-orientable square-tiled surface composed of $n$ unit
		squares. Let $\ell_1,\ell_2$ be the lengths of the loops, then $\ell=\ell_1+\ell_2$ is the length 
		of each boundary component. We compute the number of ways to glue the cylinder of circumference $\ell$
		to the M\"obius graph. First, we notice that $\ell$ can be split into the sum $\ell_1+\ell_2$ 
		of positive integers in $\ell-1$ ways. Second, one boundary component of the cylinder can be shifted by
		$0,1,\ldots,\ell-1$ units relative to the other one. We also see that $n$ must be divisible by $\ell$.
All together, this yields
		\[
		N_n^\Real (1,1)=\frac{1}{2}\sum_{\ell|n} \ell(\ell-1),
		\]
		where the factor 1/2 accounts for the rotational symmetry of the M\"obius graph.
	\end{proof}
	
	\begin{corollary}
		The number of real origami in $\mathcal{H}(1,1)$ has the asymptotics
        \[
		N_n^\Real(1,1)=\frac{\zeta(3)}{6}n^3+O(n^2)
		\quad {\rm as}\; n\rightarrow \infty.
        \]
	\end{corollary}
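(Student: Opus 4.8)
The plan is to reduce the statement to the classical estimate for the summatory function of the divisor power sums. By Theorem~\ref{Th:Genfunction} we have $N_m^\Real(1,1)=\tfrac12\bigl(\sigma_2(m)-\sigma_1(m)\bigr)$, so the asserted asymptotics, read (as is customary for square-tiled surfaces) as a statement about the number of real origami of degree at most $2n$, amounts to
\[
\sum_{m=1}^{n}N_m^\Real(1,1)=\frac{\zeta(3)}{6}\,n^3+O(n^2).
\]
Since $\sum_{m\le n}\sigma_1(m)=\tfrac{\zeta(2)}{2}n^2+O(n\log n)=O(n^2)$ by Dirichlet's classical estimate, everything reduces to showing $\sum_{m\le n}\sigma_2(m)=\tfrac{\zeta(3)}{3}\,n^3+O(n^2)$.

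To prove this I would write $\sigma_2(m)=\sum_{d\mid m}(m/d)^2$ and interchange summation via $m=ef$:
\[
\sum_{m\le n}\sigma_2(m)=\sum_{ef\le n}f^2=\sum_{e\le n}\ \sum_{f\le n/e}f^2 .
\]
The inner sum is $\sum_{f\le n/e}f^2=\tfrac13(n/e)^3+O\bigl((n/e)^2\bigr)$ by Faulhaber's formula, with an absolute implied constant, so
\[
\sum_{m\le n}\sigma_2(m)=\frac{n^3}{3}\sum_{e\le n}\frac1{e^3}+O\!\left(n^2\sum_{e\le n}\frac1{e^2}\right)=\frac{n^3}{3}\Bigl(\zeta(3)+O(n^{-2})\Bigr)+O(n^2)=\frac{\zeta(3)}{3}\,n^3+O(n^2),
\]
using $\sum_{e>n}e^{-3}=O(n^{-2})$ and $\sum_{e\le n}e^{-2}=O(1)$. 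Combining this with the bound for $\sum_{m\le n}\sigma_1(m)$ gives $\sum_{m\le n}N_m^\Real(1,1)=\tfrac12\bigl(\tfrac{\zeta(3)}{3}n^3+O(n^2)\bigr)=\tfrac{\zeta(3)}{6}n^3+O(n^2)$.

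There is no genuine obstacle: this is an elementary Dirichlet-hyperbola-style estimate, and the only points requiring care are the uniformity of the Faulhaber error term in $e$ and the (standard) convention that the $n^3$ growth refers to the partial sums $\sum_{m\le n}N_m^\Real(1,1)$ rather than to $N_n^\Real(1,1)$ itself, which is only of size $n^{2+o(1)}$. If one prefers a pointwise statement, the same argument yields the average order $\tfrac1n\sum_{m\le n}N_m^\Real(1,1)=\tfrac{\zeta(3)}{6}n^2+O(n)$.
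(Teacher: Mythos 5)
Your argument is correct and follows the same reduction as the paper: Theorem \ref{Th:Genfunction} turns the count into $\tfrac12(\sigma_2(n)-\sigma_1(n))$, and everything rests on the classical asymptotics of divisor sums, which the paper simply cites from Hardy--Wright while you derive them by the hyperbola-style interchange $\sum_{m\le n}\sigma_2(m)=\sum_{e\le n}\sum_{f\le n/e}f^2$; your uniform treatment of the Faulhaber error term in $e$ is sound. The substantive point you add --- and it is a good catch --- is that the corollary cannot be read pointwise: since $\sigma_2(n)=n^2\sum_{d\mid n}d^{-2}\le\zeta(2)\,n^2$, one has $N_n^\Real(1,1)=O(n^2)$ for every individual $n$, so the asserted $\tfrac{\zeta(3)}{6}n^3$ growth can only refer to the partial sums $\sum_{m\le n}N_m^\Real(1,1)$ (equivalently, to the average order, which is the quantity relevant for volume computations in this literature). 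The paper's own proof states the estimates for $\sigma_1(n)$ and $\sigma_2(n)$ in pointwise form, which is likewise the summatory statement written loosely; your reformulation is the correct reading, and the corollary (and its proof) would benefit from being restated in terms of $\sum_{m\le n}$.
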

	
	\begin{proof}
		It is well known that $\sigma_1(n)=\frac{\pi^2}{12}n^2 + O(n\log(n))$ and 
		$\sigma_2(n)=\frac{\zeta(3)}{3}n^3 + O(n^2)$ as $n\to\infty$,
		see, e.g., \cite{HW79}. 
	\end{proof}

    \subsection{Enumeration of real origami of genus 3}\label{g3} Here we sketch a proof of a formula for the numbers $N_n^\Real(2,2)$ that count real origami of degree $2n$ in the stratum $\mathcal{H}(2,2)$ of genus 3 Abelian differentials with 2 double poles.
    In this case there are 3 separatrix diagrams (out of 24 admissible ones) that make a nontrivial contribution
    to the numbers $N_n^\Real(2,2)$, see Figure \ref{Fig.Separatrix2}.  
\begin{figure}[h]
\begin{center}
\includegraphics[width=4in]{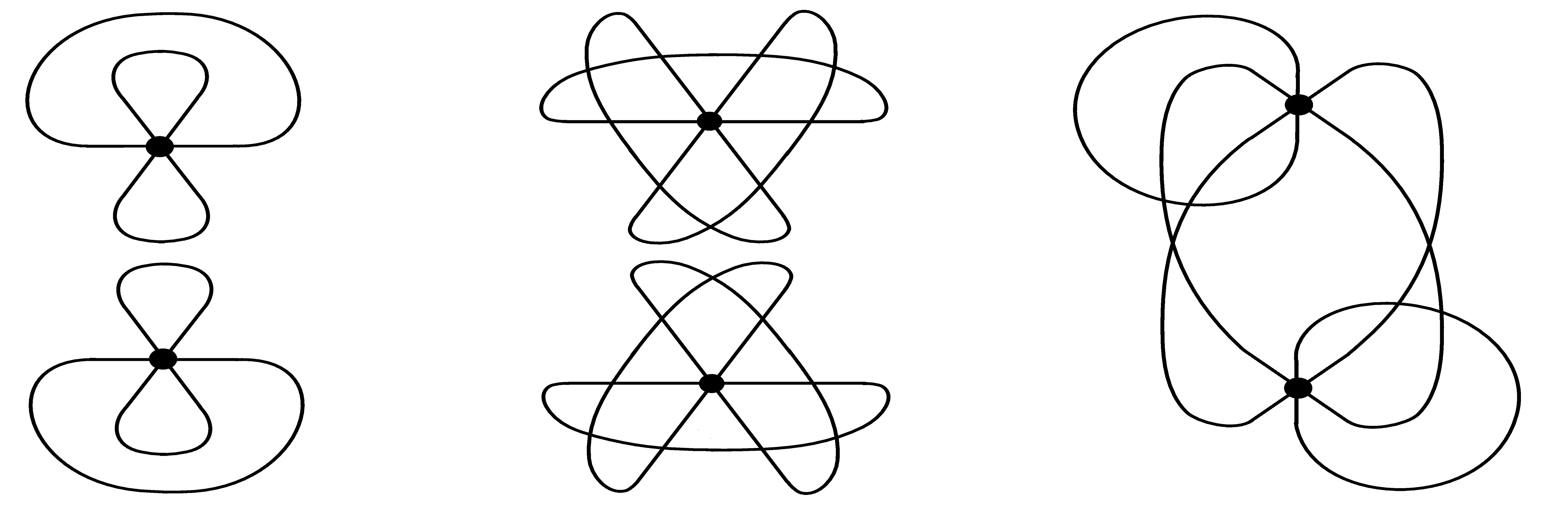}
\caption{From left to right: 4-cylinder disconnected separatrix diagram, 2-cylinder disconnected and 2-cylinder connected diagrams.}
\end{center}
        \label{Fig.Separatrix2}
	\end{figure}
More precisely,
\begin{itemize}
\item 4-cylinder disconnected diagram contributes $3E_2^2-\frac{1}{6}E_2+E_3-\frac{5}{6}E_4$,
\item 2-cylinder disconnected diagram contributes $\frac{1}{3}E_2-\frac{1}{2}E_3+\frac{1}{6}E_4$,
\item 2-cylinder connected diagram contributes $E_2-\frac{3}{2}E_3+\frac{1}{2}E_4$.
\end{itemize}
That gives in total $\sum_{n=1}^\infty N_n^\Real(2,2)q^n=3E_2^2+\frac{7}{6}E_2-E_3-\frac{1}{6}E_4$, where $E_k$ is the Eisenstein series defined above.\footnote{Note that disconnected separatrix diagrams can be associated with both connected and disconnected (2-component) real origami depending on how the cylinders are attached to them.} This calculation essentially coincides with that of \cite{DGM2021}, pp. 13--15, for the stratum $\mathcal{H}(2)$. However, such an
approach becomes overly complicated even for low-dimensional strata in the moduli spaces
of Abelian differentials.

	  \subsection{Quasimodularity vs. quantum modularity}
      As shown in \cite{EO2001}, the generating function for the numbers of complex origami
      in the stratum $\mathcal{H}(\mu_1-1,\ldots,\mu_k-1)$ of Abelian differentials with $k$ zeros of orders $\mu_1-1,\ldots,\mu_k-1$ is a quasimodular form of weight $2g$ (here $\mu=[\mu_1\ldots\mu_k]$
	is a partition of $2n$ encoding the ramification profile over $0\in T$). Although not 
    (quasi)modular, the Eisenstein series $E_3$ that appears in Theorem
    \ref{Th:Genfunction} and Subsection \ref{g3} belongs to the class of
    {\it quantum modular forms} introduced by D.~Zagier in \cite{Zagier2010}. 
    Explicit computations for these two strata (namely $\mathcal{H}(1,1)$ and $\mathcal{H}(2,2))$, as well as for some other low genus strata\footnote{To appear elsewhere.} make it natural to expect that
	the following is true:
	\begin{conjecture}\label{Conj:QMF}
		The generating function for the numbers of real origami in the stratum
		$\mathcal{H}(\mu_1-1,\mu_1-1,\ldots,\mu_k-1,\mu_k-1)$ of Abelian differentials with
        $\mu$ a partition of $n$ is a quantum modular form.
	\end{conjecture}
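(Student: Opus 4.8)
The plan is to prove a ``real'', i.e.\ non-orientable, analogue of the Bloch--Okounkov and Eskin--Okounkov theorems. By the zonal-polynomial machinery of Section~\ref{Section:Zonal}, the number $N_n^{\Real}$ of real origami in the stratum $\mathcal H(\mu_1-1,\mu_1-1,\ldots,\mu_k-1,\mu_k-1)$ is a weighted sum over partitions of the values of the zonal spherical functions attached to the ramification profile $[\mu_1^2\cdots\mu_k^2]$. The first step is to recognize $\sum_{n\ge 1}N_n^{\Real}(\dots)\,q^n$ as a $q$-bracket of zonal shifted-symmetric observables in the Jack-deformed Fock space at parameter $\alpha=2$ --- the natural ``$\beta=1/2$'' counterpart of the infinite-wedge formalism underlying \cite{EO2001}, in which the $q$-bracket of ordinary shifted-symmetric functions is a quasimodular form of weight $2g$. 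The target statement is then that this zonal $q$-bracket is a \emph{quantum} modular form.

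The key structural point --- and the mechanism by which odd-weight Eisenstein series enter --- is that the hyperoctahedral (zonal) tower replaces the integer contents of a partition by half-integer-shifted contents (equivalently, $\alpha=2$ Jack data, or the combinatorics of the Gelfand pair whose spherical functions are the zonal polynomials). When the generating operator is expanded and its $q$-coefficients extracted, these half-integer shifts are expected to produce, alongside the usual even Eisenstein series $E_2,E_4,\dots$ carried by the orientable part, the odd Eisenstein series $E_3,E_5,\dots$ of \eqref{Eis} --- precisely the ``half-integral weight'' Eisenstein series singled out by Zagier in \cite{Zagier2010} as basic examples of quantum modular forms. This is corroborated by the explicit answers of Theorem~\ref{Th:Genfunction} and Subsection~\ref{g3}, where $E_3$ occurs, and it matches the geometry: factoring an admissible separatrix diagram by the anti-holomorphic involution, as in the proof of Theorem~\ref{Th:Genfunction}, yields a M\"obius ribbon graph whose cross-caps carry an odd number of half-twists, and it is exactly this cross-cap contribution that produces the odd-weight part.

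Granting this first step, one is reduced to the purely analytic assertion that the $\Real$-algebra $\mathcal R$ generated by the Eisenstein series $E_j$ ($j\ge 2$) together with all their $q$-derivatives $\bigl(q\,\tfrac{d}{dq}\bigr)^{m}E_j$ consists of quantum modular forms. For this, the even subalgebra is the ring of quasimodular forms, whose elements are quantum modular with polynomial period functions; the $E_{2k+1}$ are quantum modular by \cite{Zagier2010}; and the period functions of all of these are real-analytic away from finitely many rationals, a property preserved under sums, products and $q$-differentiation, so $\mathcal R$ lies inside the space of quantum modular forms. This would establish the conjecture for the (possibly disconnected) count of Definition~\ref{Def:OrigamiCombi}; the connected count then follows by the standard inclusion--exclusion over the at most two connected components described in the footnote of Subsection~\ref{g3}, once one checks it preserves membership in $\mathcal R$.

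I expect the main obstacle to be the first step in full generality, i.e.\ proving the non-orientable Bloch--Okounkov theorem for an arbitrary stratum $\mathcal H(2\lambda)$. Already in the complex case the hard point is that the multi-variable divisor-type sums produced by the cylinder decompositions collapse into quasimodular forms; the real case inherits this and must in addition pin down that the M\"obius/cross-cap contributions land precisely on the odd Eisenstein series and their derivatives and products, with no mock-modular or false-theta remainder. Settling this seems to require an explicit operator formalism for the zonal tower --- Jack-deformed vertex operators at $\alpha=2$, or equivalently a workable description of the zonal characters --- which is exactly the circle of questions discussed in Section~\ref{Section:JackFunction}. A secondary and more routine task is the weight bookkeeping: one should show that the graded ``weight $w$'' piece of the generating function is a quasimodular form of weight $w$ plus products and $q$-derivatives of odd Eisenstein series of total weight $w$, with $w$ bounded in terms of $g$ (weights $\le 3$ for $\mathcal H(1,1)$ and $\le 4$ for $\mathcal H(2,2)$), in contrast with the weight exactly $2g$ of the complex case in \cite{EO2001}.
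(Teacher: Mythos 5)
The statement you are addressing is stated in the paper as a \emph{conjecture}, and the paper offers no proof of it: the only supporting evidence given is the explicit computations for $\mathcal{H}(1,1)$ (Theorem~\ref{Th:Genfunction}) and $\mathcal{H}(2,2)$ (Subsection~\ref{g3}), plus the authors' remark that the closely related statement --- that the $q$-bracket of a $2$-shifted symmetric function is quantum modular --- ``currently has a status of work in progress.'' Your first step is essentially that very statement: recognizing the generating function as a $q$-bracket of zonal ($\alpha=2$) observables and then invoking a non-orientable Bloch--Okounkov theorem. You are candid that this is the main obstacle, and indeed it is not a gap in a proof so much as the entire open problem; as written, your proposal is a research plan aligned with the paper's own Section~\ref{Section:JackFunction}, not a proof.

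Beyond that, there is a concrete error in the step you describe as ``purely analytic'' and hence routine. You assert that the algebra $\mathcal{R}$ generated by the $E_j$ and their $q$-derivatives consists of quantum modular forms because real-analyticity of the period functions ``is preserved under sums, products and $q$-differentiation.'' Sums are fine, but products are not: if $f$ and $g$ are quantum modular with cocycles $h_f^\gamma = f - f|_k\gamma$ and $h_g^\gamma = g - g|_{k'}\gamma$ extending nicely to $\Real$, then the cocycle of $fg$ is
\begin{equation}
fg - (fg)|_{k+k'}\gamma \;=\; f\cdot h_g^\gamma + h_f^\gamma\cdot (g|_{k'}\gamma)\,,
\end{equation}
which involves the restriction of $f$ (respectively $g|_{k'}\gamma$) to $\mathbb{Q}$, a function with no regularity whatsoever. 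So quantum modular forms are not known to form a ring, and products such as $E_3E_5$ or $E_2E_3$ arising in higher-genus strata are not automatically quantum modular. (The genus~$3$ answer $3E_2^2+\tfrac76E_2-E_3-\tfrac16E_4$ evades this only because the nonlinear term $E_2^2$ is quasimodular and the genuinely quantum piece $E_3$ enters linearly.) Any proof of Conjecture~\ref{Conj:QMF} along your lines would have to either show that only such ``safe'' combinations occur, or work with a strictly larger notion of quantum modularity closed under the operations you need. A smaller point: for a general stratum the disconnected count is not resolved by a two-component inclusion--exclusion; one must take the full logarithm as in \eqref{Eq:GenFunctConnR}, which again produces products.
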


      	  We now dwell a little on Conjecture \ref{Conj:QMF} with a 
	  special case. In the following paragraphs we skip most of the details and refer the reader to the cited articles.
	  We call a ramified covering of the torus \emph{simple}
	  if the branch points are all simple. In particular,
	  by the Riemann-Hurwitz formula, a covering of the torus simply ramified over
        $2g-2$ points has genus $g$.
	  It was predicted by Dijkgraaf \cite{Dij}, and formally proved by Kaneko
	  and Zagier \cite{KZ}, that the generating function for the numbers of simply ramified genus $g$
	  coverings of the torus, denoted
	  here by $Z(q,x)$,  is a quasi-modular form of weight $6g-6$ for the modular group
	  $\text{SL}(2,\Integer)$. 
	  The starting point of this topic was the following important observation:
	  \begin{proposition}\label{Prop:Content}
	  	The number $T_{g,d}$ of (possibly disconnected) simply ramified coverings of the torus
	  	of genus $g$ and degree $d$ is given by
	  	\begin{equation}
	  		T_{g,d} = \sum_{\lambda\vdash d}\nu(\lambda)^{2g-2},
	  	\end{equation}
	  	where 
	  	 \begin{equation}\label{Eq:Content}
	  	\nu(\lambda) = \frac{1}{2}\sum_{i=1}^\infty(\lambda_i-i+1/2)^2-(-i+1/2)^2
	  	\end{equation}
	  \end{proposition}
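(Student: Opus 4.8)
The plan is to express $T_{g,d}$ as an automorphism-weighted count of branched covers via monodromy, convert it into a coefficient-extraction problem in the centre of the group algebra $\mathbb{C}[S_d]$, and evaluate it using the two classical character identities for commutators and for the transposition class sum. \emph{Step 1 (monodromy).} First I would recall that a degree-$d$, possibly disconnected, cover of the torus simply ramified over $2g-2$ fixed points, weighted by $1/|\mathrm{Aut}|$, is counted by
\[
T_{g,d}=\frac{1}{d!}\,\#\bigl\{(a,b,\tau_1,\dots,\tau_{2g-2})\in S_d\times S_d\times \mathcal T^{2g-2}\ :\ [a,b]\,\tau_1\cdots\tau_{2g-2}=e\bigr\},
\]
where $\mathcal T\subset S_d$ is the set of transpositions; the relation is the single defining relation of $\pi_1$ of the torus with $2g-2$ punctures, the $\tau_i$ are transpositions because the branching is simple, and the absence of a transitivity requirement is exactly what makes disconnected covers appear.

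\emph{Step 2 (central algebra).} Next I would observe that both $K:=\sum_{a,b\in S_d}[a,b]$ and $T:=\sum_{\tau\in\mathcal T}\tau$ lie in the centre $Z(\mathbb{C}[S_d])$ (each is stable under simultaneous conjugation), hence so does $\Omega:=K\,T^{2g-2}$, and the cardinality in Step 1 is precisely the coefficient of the identity $e$ in $\Omega$. Thus $d!\,T_{g,d}$ equals that coefficient, and the problem becomes purely algebraic.

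\emph{Step 3 (eigenvalues).} Then I would compute the scalar $\omega_\lambda(\Omega)$ by which $\Omega$ acts on the irreducible $S_d$-module $V_\lambda$. Using Schur's lemma in the form $\sum_{a}\rho_\lambda(a x a^{-1})=\tfrac{d!}{\dim\lambda}\chi^\lambda(x)\,\mathrm{Id}$ together with the orthogonality relation $\sum_{b}\chi^\lambda(b)\chi^\lambda(b^{-1})=d!$, one gets $\sum_{a,b}\chi^\lambda([a,b])=(d!)^2/\dim\lambda$, so $K$ acts on $V_\lambda$ by $\bigl(d!/\dim\lambda\bigr)^2$. The class sum $T$ acts by $\binom d2\,\chi^\lambda(\tau)/\dim\lambda$, which by the Jucys--Murphy computation (equivalently, Frobenius's formula for the value of $\chi^\lambda$ on a transposition) equals the content sum $\sum_{\square\in\lambda}c(\square)$ with $c(i,j)=j-i$. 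A one-line algebraic identity — expanding $(\lambda_i-i+\tfrac12)^2-(-i+\tfrac12)^2=\lambda_i(\lambda_i-2i+1)$ and summing over $i$, using $\sum_i\lambda_i=d$ — identifies this content sum with $\nu(\lambda)$ as written in \eqref{Eq:Content}. Hence $\omega_\lambda(\Omega)=\bigl(d!/\dim\lambda\bigr)^2\,\nu(\lambda)^{2g-2}$.

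\emph{Step 4 (coefficient extraction).} Finally I would expand $\Omega$ in central primitive idempotents, $\Omega=\sum_{\lambda\vdash d}\omega_\lambda(\Omega)\,p_\lambda$ with $p_\lambda=\tfrac{\dim\lambda}{d!}\sum_{\sigma}\chi^\lambda(\sigma^{-1})\sigma$, whose coefficient of $e$ is $(\dim\lambda)^2/d!$. This yields
\[
d!\,T_{g,d}=\sum_{\lambda\vdash d}\Bigl(\frac{d!}{\dim\lambda}\Bigr)^{2}\nu(\lambda)^{2g-2}\,\frac{(\dim\lambda)^2}{d!}=d!\sum_{\lambda\vdash d}\nu(\lambda)^{2g-2},
\]
and dividing by $d!$ gives the proposition. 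The part needing the most care is bookkeeping the normalizations so that the $1/d!$ automorphism weight cancels exactly, and invoking the two character identities (the commutator count $(d!/\dim\lambda)^2$ and the transposition eigenvalue $\nu(\lambda)$) in precisely the right form; conceptually there is no serious obstacle, since this is the standard Burnside/Frobenius computation underlying Hurwitz-number formulae, and one can cross-check small cases (e.g. $g=2$, $d=2$ gives $1^2+(-1)^2=2$).
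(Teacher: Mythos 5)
Your proof is correct and follows the standard Burnside/Frobenius route that underlies this identity; the paper itself states the proposition without proof, citing Dijkgraaf and Kaneko--Zagier, where essentially this computation appears. All the key steps check out: the commutator class sum acting by $(d!/\dim\lambda)^2$, the transposition class sum acting by the content sum, the elementary identity identifying that content sum with $\nu(\lambda)$, and the cancellation of normalizations in the idempotent expansion.
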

	  
	  The quantity $\nu(\lambda)$ is important and appears
	  in different contexts; we will come back
	  to it later.
	  Using Proposition \ref{Prop:Content}, the generating function of possibly
	  disconnected ramified covering of the torus $Z^\circ(q,x)$ can be written as
	  $$Z^\circ(q,x)=\sum_{n=1}^\infty\sum_{\lambda\vdash n}e^{\nu(\lambda)x}q^{|\lambda|},$$
	  see \cite{Dij,KZ} for details.
	  
	  We now treat the real structure case. Denote by $\text{conj}:z\mapsto \bar{z}$ the complex conjugation on the torus $T$. We call a simply ramified covering $f:X\to T$
	  \emph{real} if the covering surface $X$ admits an orientation reversing
	  involution $\phi$ without fixed points such that 
	  $f\circ\phi(x)={\rm conj}\circ f(x)$ for any $x\in X$.
	  In particular if the covering surface has genus $g$
	  then the critical points come in $g-1$ pairs 
	  $(x_1,\phi(x_1),\dots,x_{g-1},\phi(x_{g-1}))$.
	  Denote by $T_{g,d}^\Real$ the number of simply ramified coverings of the torus
	  of degree $d$ and genus $g$, possibly disconnected. Notice that
	  the geometric definition of $T_{g,d}^\Real$ given here
	agrees (up to a simple renormalization) with the combinatorial Definition 3       of \cite{HM24} that follows from Proposition \ref{Prop:DefEqui}.
	  Now, using the combinatorial definition of $T_{g,d}^\Real$ of \cite{HM24}
	  and the results of \cite{Mat11} we get
	  \begin{proposition}\label{Prop:ContentReal}
	  	\begin{equation}
	  		T_{g,d}^\Real = \sum_{\lambda\vdash d}\nu_\Real(\lambda)^{g-1},
	  	\end{equation}
	  	where \begin{equation}\label{Eq:ContentReal}
	  	\nu_\Real(\lambda) = \sum_{i=1}^\infty(\lambda_i-i/2)^2-(-i/2)^2.
	  	\end{equation}
	  \end{proposition}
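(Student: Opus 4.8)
The plan is to mirror the classical symmetric-group computation underlying Proposition~\ref{Prop:Content} (cf.~\cite{Dij,KZ}), replacing the representation theory of $S_d$ by the harmonic analysis of the Gelfand pair $(S_{2d},H)$, where $H=C_{S_{2d}}(\tau_0)\cong\Integer/2\wr S_d$ is the hyperoctahedral group realised as the centraliser of a fixed-point-free involution $\tau_0\in S_{2d}$; this is what brings in zonal polynomials, since after the characteristic map the spherical functions of $(S_{2d},H)$ are precisely the zonal polynomials $Z_\lambda$, $\lambda\vdash d$, in the same way that the irreducible characters of $S_d$ yield the Schur polynomials. Concretely, I would first unwind the combinatorial Definition~3 of \cite{HM24} (which, by Proposition~\ref{Prop:DefEqui}, computes the geometric $T_{g,d}^\Real$ up to an explicit normalisation, the combinatorial fibre carrying $2d$ rather than $d$ points): it presents $T_{g,d}^\Real$ as a weighted count of tuples consisting of $h,v\in S_{2d}$, a fixed-point-free involution $\tau$ and $g-1$ transpositions $t_1,\dots,t_{g-1}$, subject to the reality constraints $\tau h\tau=h$, $\tau v\tau=v^{-1}$, $\tau t_i\tau=\bar t_i$ and a torus monodromy relation of the shape $[h,v]\,t_1\bar t_1\cdots t_{g-1}\bar t_{g-1}=\mathrm{id}$, together with transitivity. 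Conjugating $\tau$ to the fixed reference involution $\tau_0$ (all such being conjugate, at the cost of an explicit combinatorial factor), the conditions $\tau_0 h\tau_0=h$ and $\tau_0 v\tau_0=v^{-1}$ force $h\in H$ and make $v\tau_0$ an involution, so the entire count is assembled out of $H$-biinvariant data and can be evaluated inside the Hecke algebra $\Complex[H\backslash S_{2d}/H]$.

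The next step is to diagonalise. The algebra $\Complex[H\backslash S_{2d}/H]$ is commutative, with primitive idempotents $\{\epsilon^\lambda\}_{\lambda\vdash d}$ attached to the spherical representations $V^\lambda\subset\Complex[S_{2d}/H]$; write $d_\lambda=\dim V^\lambda$. The ``genus-one'' (commutator) part of the count is an $H$-biinvariant element acting on each $\epsilon^\lambda$ by a scalar which, via the Gelfand-pair analogue of the Frobenius commutator formula, equals a $\lambda$-independent constant times $1/d_\lambda$; each conjugate pair $(t_i,\bar t_i)$ of simple branch points contributes, once $\tau_0$-equivariance is imposed, one factor of the eigenvalue $\kappa(\lambda)$ of the corresponding transposition double-coset sum on $\epsilon^\lambda$. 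The coefficient-of-identity functional is, up to a $\lambda$-independent constant, multiplication by $d_\lambda$, so the $d_\lambda$ cancel; after dividing by the overall normalisation this leaves
\[
T_{g,d}^\Real=\sum_{\lambda\vdash d}\kappa(\lambda)^{\,g-1}.
\]
The exponent $g-1$ here, rather than $2g-2$ as in Proposition~\ref{Prop:Content}, is precisely the imprint of the conjugate-pairs structure of the ramification: the $2g-2$ branch points fall into $g-1$ orbits of $\tau_0$, each contributing a single factor.

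It remains to identify $\kappa(\lambda)$, and this is where \cite{Mat11} enters: the eigenvalue of the transposition double-coset sum on the zonal spherical function indexed by $\lambda$ is the Jack content sum at parameter $\alpha=2$,
\[
\kappa(\lambda)=\sum_{(i,j)\in\lambda}\bigl(2(j-1)-(i-1)\bigr)=2\,n(\lambda')-n(\lambda),
\]
and the elementary identity $2\,n(\lambda')-n(\lambda)=\sum_i\bigl(\lambda_i^2-i\lambda_i\bigr)=\sum_i\bigl((\lambda_i-i/2)^2-(i/2)^2\bigr)$ identifies it with $\nu_\Real(\lambda)$. Substituting into the display above gives the asserted formula.

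The representation theory above is essentially off the shelf once the Gelfand-pair picture is in place, so I expect the genuine work---and the main obstacle---to be normalisation bookkeeping: reconciling the combinatorial weighting of \cite{HM24} with the geometric count and tracking the hidden powers of $2$ (in particular the fibre-size convention) behind ``up to a simple renormalisation''; confirming that transposition double cosets correctly model conjugate pairs of simple branch points; and verifying that the $|H|$- and $|S_{2d}|$-factors in the Gelfand-pair commutator and Frobenius formulas cancel the spherical dimensions $d_\lambda$ as cleanly as the $\dim\lambda$ cancel in the classical computation underlying Proposition~\ref{Prop:Content}.
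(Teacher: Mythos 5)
Your proposal is correct and follows exactly the route the paper intends: the paper offers no written proof, only the citations to the combinatorial definition of \cite{HM24} and to \cite{Mat11}, and your reconstruction --- diagonalising the count in the Hecke algebra of the Gelfand pair $(S_{2d},H_d)$, with each conjugate pair of simple branch points contributing one factor of the transposition double-coset eigenvalue (the $\alpha=2$ content sum $2n(\lambda')-n(\lambda)=\sum_i(\lambda_i^2-i\lambda_i)=\nu_\Real(\lambda)$, whence the exponent $g-1$ instead of $2g-2$) --- is precisely what those citations encode. The remaining work you flag (normalisation bookkeeping between the combinatorial and geometric counts) is real but routine, and your identification $f_2^\Real=\nu_\Real$ matches the paper's own later statement.
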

	  Let
	  \begin{equation}
	  	F_g(q) = \sum_{d=1}^\infty T^{\Real,\text{conn}}_{g,d}q^d
	  \end{equation}  
	 be the generating function for the numbers of genus $g$ real connected simply ramified coverings of the torus. Then we have  
	 \begin{theorem}
     For the generating function $F_2(q)$ holds the identity
	 	$$F_2(q)=\frac{1}{2}(E_3(q)-E_2(q)).$$ 
        In particular, $F_2(q)$ is a quantum modular form.
	 \end{theorem}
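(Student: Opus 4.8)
The plan is to compute $F_2(q)$ directly from the formula in Proposition~\ref{Prop:ContentReal} and then recognize the answer as a combination of Eisenstein series. First I would specialize to $g=2$, so that
\[
\sum_{d=1}^\infty T_{2,d}^\Real q^d = \sum_{d=1}^\infty \Bigl(\sum_{\lambda\vdash d}\nu_\Real(\lambda)\Bigr)q^d
= \sum_{\lambda}\nu_\Real(\lambda)\,q^{|\lambda|},
\]
the sum running over all partitions. The exponent $g-1=1$ is what makes genus $2$ special: the content-like weight $\nu_\Real(\lambda)$ appears linearly, so there is no need to deal with mixed moments of $\nu_\Real$. The next step is to evaluate $\nu_\Real(\lambda)=\sum_{i\ge1}\bigl((\lambda_i-i/2)^2-(i/2)^2\bigr)=\sum_{i\ge1}(\lambda_i^2 - i\lambda_i)$, a finite sum since $\lambda_i=0$ for large $i$. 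Writing $\lambda$ in terms of its parts, this is an elementary symmetric-type statistic, and I would express $\sum_\lambda \nu_\Real(\lambda)q^{|\lambda|}$ as a $q$-series by the standard generating-function manipulation: differentiate/weight the partition generating function $\prod_{k\ge1}(1-q^k)^{-1}$, or equivalently sum over the multiset of part sizes with multiplicities. The cleanest route is to use the ``adding a box'' / hook-content style bookkeeping: $\nu_\Real(\lambda)$ telescopes nicely when one records the contribution of each cell, so $\sum_\lambda \nu_\Real(\lambda)q^{|\lambda|}$ becomes a sum over pairs (partition, distinguished cell) which factors.

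Concretely, I expect $\nu_\Real(\lambda)$ to be the sum over cells $\square\in\lambda$ of some local weight (a shifted content), and then $\sum_\lambda\nu_\Real(\lambda)q^{|\lambda|} = \bigl(\sum_\lambda q^{|\lambda|}\bigr)\cdot(\text{something})$ is too naive; rather I would fix the column or the arm structure and resum. After obtaining a closed $q$-series for $\sum_d T_{2,d}^\Real q^d$, I must pass from the possibly-disconnected count to the connected count $T_{2,d}^{\Real,\mathrm{conn}}$. For genus $2$ this passage is governed by the exponential formula, but crucially a disconnected real simply-ramified cover of the relevant Euler characteristic in genus $2$ must have exactly one ``real'' component contributing the single pair of branch points and the other components are unramified tori; since an unramified connected cover of $T$ contributes via a separate (known) generating function, I would either argue that for the genus-$2$ stratum the disconnected contributions vanish or can be subtracted explicitly. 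In fact the cleanest statement is that because $g-1=1$, a disconnected cover would need its two branch points on the same component, forcing all other components to be trivial covers of degree summing to the rest — and I would check that after the subtraction the generating function is exactly $\tfrac12(E_3-E_2)$, matching Theorem~\ref{Th:Genfunction}. Indeed, comparing with Theorem~\ref{Th:Genfunction}, where $N_n^\Real(1,1)=\tfrac12(\sigma_2(n)-\sigma_1(n))$, I would expect $T_{2,d}^{\Real,\mathrm{conn}}$ to coincide with $N_d^\Real(1,1)$ on the nose (a simple cover of the torus simply ramified over $2g-2=2$ points and a real origami in $\mathcal{H}(1,1)$ are the same object), so the identity $F_2=\tfrac12(E_3-E_2)$ follows immediately from the already-proved Theorem~\ref{Th:Genfunction} together with the observation that the two enumeration problems are literally the same.

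Given that, the honest shortest proof is: (i) observe that a connected real simply-ramified degree-$d$ cover of $T$ of genus $2$ is ramified over exactly $2g-2=2$ points, with the two branch points exchanged by $\phi$, hence descends to a single branch point downstairs and produces precisely a real origami of degree $2d$... — here I need to be careful with the factor of $2$ in ``degree'' conventions — so that $T_{2,d}^{\Real,\mathrm{conn}}=N_d^\Real(1,1)$; (ii) invoke Theorem~\ref{Th:Genfunction} to get $F_2(q)=\sum_d N_d^\Real(1,1)q^d = \tfrac12(E_3-E_2)$; (iii) cite \cite{Zagier2010} for the fact that $E_3$, equivalently $\sum\sigma_2(n)q^n$, is a quantum modular form (it is the ``depth one'' Eisenstein-type object whose period function is a cocycle), while $E_2$ is quasimodular, hence the difference is a quantum modular form. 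The main obstacle I anticipate is reconciling the two bookkeeping conventions — the degree/ramification normalization relating ``simply ramified cover'' language (Proposition~\ref{Prop:ContentReal}, \cite{HM24,Mat11}) with the ``origami in $\mathcal{H}(1,1)$'' language (Theorem~\ref{Th:Genfunction}), including the automorphism weights and the factor accounting for the involution — and, if one instead insists on proving the $E_3-E_2$ identity purely from the $\nu_\Real$ formula, carrying out the partition resummation and correctly handling the disconnected-to-connected reduction for $g=2$. Once the dictionary is pinned down, no further computation is needed.
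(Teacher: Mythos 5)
Your main route has a genuine gap. The step you lean on --- that a connected real simply ramified genus-$2$ cover of $T$ and a real origami in $\mathcal{H}(1,1)$ ``are literally the same object'', so that $T^{\Real,\mathrm{conn}}_{2,d}=N_d^\Real(1,1)$ follows by inspection --- is not true at the level of objects. A degree-$d$ real simply ramified cover of genus $2$ is branched over \emph{two} distinct points $p$ and $\mathrm{conj}(p)$ of $T$, each with profile $[2,1^{d-2}]$, whereas a real origami in $\mathcal{H}(1,1)$ is branched over the \emph{single} point $0\in T$ with profile $[2,2,1^{2n-4}]$ (and at degree $2n$). These are different enumeration problems, and the paper records the coincidence $T^{\Real,\mathrm{conn}}_{2,d}=N_d^\Real(1,1)$ only \emph{after} the theorem, as a remark, precisely because it is a consequence of computing both generating functions independently and finding them both equal to $\tfrac12(E_3-E_2)$ --- a small ``mirror symmetry'' phenomenon, not a tautology. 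Unless you supply an actual bijection (with the degree doubling and the automorphism weights accounted for), steps (i)--(ii) of your short proof are circular with respect to what needs to be shown.

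Your first plan --- computing $F_2(q)=\langle \nu_\Real\rangle_q$ directly from Proposition \ref{Prop:ContentReal}, i.e.\ resumming $\sum_\lambda \nu_\Real(\lambda)q^{|\lambda|}$ and handling the passage to connected covers --- is a viable alternative in principle, but you leave both the partition resummation and the connected/disconnected reduction as anticipated obstacles rather than carrying them out, so it does not constitute a proof either. For comparison, the paper's own argument is different from both of your routes: it reruns the monotone-factorization argument of Theorem \ref{Th:HsimisH}, with Lemma \ref{Lemma:MonoInvo} (unique strictly monotone factorization of fixed-point-free involutions) playing the role of Lemma \ref{Lem:Monot}, or alternatively applies the propagator of \cite{HM24} to the unique genus-$2$ tropical graph. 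Your step (iii), citing \cite{Zagier2010} for the quantum modularity of $E_3$, matches what the paper does.
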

     \begin{proof}
     The proof is based on Lemma \ref{Lemma:MonoInvo} and is similar to the proof of Theorem \ref{Th:HsimisH} but for involutions without fixed points instead of
     general permutations. Alternatively, applying the propagator of \cite{HM24} to a loop with one vertex (which is
     the unique tropical graph of genus 2) we get the same result.
     \end{proof}
     \begin{remark}
     It is tempting to treat the equality 
     $$
    T^{\Real,\text{conn}}_{2,d}=N_d^\Real(1,1)
     $$
     as a tiny instance of the general ``mirror symmetry'' phenomenon.    
     \end{remark}
     
	Notice that the generating function $Z_\Real^\circ(q,x)$ of (possibly
	  disconnected) real ramified coverings of the torus  can be written as
$$
Z^\circ_\Real(q,x)=\sum_{n=1}^\infty\sum_{\lambda\vdash n}e^{\nu_\Real(\lambda)x}q^{|\lambda|}.
      $$
      
	 We now bring the quantities from Equation \ref{Eq:Content} and \ref{Eq:ContentReal}
	 into a broader context. We start with Equation \ref{Eq:Content}. 
     The \emph{algebra of shifted symmetric polynomials} is defined as
     the projective limit 	 
$\Lambda^*=\lim\limits_{\longleftarrow}\Lambda^*(n)$ , where
	 $\Lambda^*(n)$ is the algebra of symmetric polynomials in $n$
	  variables $\lambda_1 -1 ,\dots, \lambda_n-n$.
	 The limit is taken with respect to the homomorphisms setting the last
	 variable equal to zero. In particular, $\nu(\lambda)$ belongs to $\Lambda^*$.
     
	 Introduce the family of shifted symmetric functions
	 \begin{equation}
	 \mathbf{P}_\ell(\lambda) = \sum_{i=1}^\infty(\lambda_i-i+1/2)^\ell-(-i+1/2)^\ell
	 \text{ and } \mathbf{P}_\lambda =\prod_{i}\mathbf{P}_{\lambda_i}\,,
	 \end{equation}
	 so that $\nu(\lambda)=\frac{1}{2}\mathbf{P}_2(\lambda)$.
	 Consider also the set of \emph{central characters} that are defined as
	 \begin{equation}\label{Eq:CentChar}
	 	f_\sigma(\lambda)=\frac{|C_\sigma|}{\chi^\lambda(1)}\chi^\lambda(\sigma),
	 \end{equation}
	 where $|C_\sigma|$ is the cardinality of the conjugacy class of $\sigma$,
	 $\chi^\lambda(1)$ is the dimension of the irreducible representation
	 indexed by $\lambda$, and $\chi^\lambda(\sigma)$ is the value of the character
     indexed by $\lambda$ applied to $\sigma$. We write $f_k(\lambda)=f_\sigma(\lambda)$ in case 
	 when $\sigma$ is a $k-$cycle. In particular, we have
	 $$f_2=\nu(\lambda),$$
	 see \cite{Mac95}, Chapter 1. 
     
	 We now summarize the main properties of shifted symmetric functions
in the following
	 \begin{theorem}[\cite{KO94}]\label{Th:KO}
	 	The algebra $\Lambda^*$ is freely generated by $\mathbf{P}_\ell$
	 	with $\ell\geq1$. The functions $f_\mu$ belong to $\Lambda^*$ and form a 
	    basis of $\Lambda^*$
	    as $\mu$ ranges over all partitions.
	 \end{theorem}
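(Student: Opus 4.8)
The plan is to treat both assertions with one tool: the degree filtration on $\Lambda^*$, whose associated graded algebra is the ordinary ring of symmetric functions $\Lambda$.

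First I would set up this filtration. For $f\in\Lambda^*$ let $\deg f$ be its degree as a polynomial in the variables $\lambda_i-i$; this is well defined and multiplicative. The top-degree homogeneous component of a degree-$d$ element is a genuinely symmetric polynomial in the $\lambda_i-i$ (equivalently in the $\lambda_i$, since the shift $\lambda_i\mapsto\lambda_i-i$ only perturbs lower-order terms), and setting $\lambda_n=0$ and passing to top-degree parts reproduces the standard stabilization of $\Lambda$, so these leading forms assemble into an element of $\Lambda$. This gives an injection $\mathrm{gr}\,\Lambda^*\hookrightarrow\Lambda$. On the other hand each $\mathbf{P}_\ell$ lies in $\Lambda^*$ --- the correction term $-(-i+1/2)^\ell$ in its definition is precisely what makes the family compatible under $\lambda_n\mapsto 0$ --- it has degree $\ell$, and its leading form is the power sum $p_\ell$. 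Since $\Lambda=\mathbb{Q}[p_1,p_2,\dots]$, the leading forms of polynomials in the $\mathbf{P}_\ell$ already exhaust $\Lambda$, so the injection above is onto and $\mathrm{gr}\,\mathbb{Q}[\mathbf{P}_1,\mathbf{P}_2,\dots]=\mathrm{gr}\,\Lambda^*$. A standard filtered-algebra argument (lift a homogeneous generating set and induct on degree; for algebraic independence, pass to $\mathrm{gr}$ the top-degree component of any putative relation) then gives $\Lambda^*=\mathbb{Q}[\mathbf{P}_1,\mathbf{P}_2,\dots]$ as a free polynomial algebra, which is the first assertion; in particular $\{\mathbf{P}_\mu\}$ is a linear basis of $\Lambda^*$.

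For the second assertion, the substantive point --- and the step I expect to be the main obstacle --- is the membership $f_\mu\in\Lambda^*$. I would prove it using the Jucys--Murphy presentation $Z\Complex[S_n]=\mathbb{Q}[X_1,\dots,X_n]^{S_n}$, in which $X_i$ acts on the Gelfand--Tsetlin vector indexed by a standard tableau by the content of the box inserted at the $i$-th step. Hence any symmetric polynomial in the $X_i$ acts on the irreducible module $V^\lambda$ by a symmetric polynomial in the multiset of contents of $\lambda$, and writing $\sum_{b\in\lambda}c(b)^k=\sum_i\bigl(F_k(\lambda_i-i)-F_k(-i)\bigr)$ for a suitable polynomial $F_k$ exhibits it as a shifted symmetric polynomial in $(\lambda_1,\dots,\lambda_n)$. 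Thus for fixed $n$ the eigenvalue of any class sum on $V^\lambda$ --- in particular $\tfrac{|C_\mu|}{\chi^\lambda(1)}\chi^\lambda(\mu)=f_\mu(\lambda)$ for the class of cycle type $\mu$ --- is an element of $\Lambda^*(n)$. The remaining, more delicate point is $n$-independence: here one invokes the Farahat--Higman theorem that the structure constants of the class algebra are polynomial in $n$ (equivalently, one passes to the Ivanov--Kerov algebra of partial permutations), which guarantees that these polynomials form a compatible family and so define a genuine element of $\Lambda^*=\varprojlim\Lambda^*(n)$.

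It remains to see that the $f_\mu$ form a basis, for which I would once more compare leading forms. The single-cycle functions $f_k$ are classically known to have degree $k$ with leading form a nonzero multiple of $p_k$; and since the class sum of cycle type $\mu$ differs from the product of the class sums of its individual cycles by permutations moving strictly fewer points, $f_\mu$ has degree $|\mu|$ with leading form a nonzero multiple of $p_{\mu_1}\cdots p_{\mu_{\ell(\mu)}}=p_\mu$ (keeping track of fixed points in the normalization is what makes this hold uniformly in $\mu$). Hence $f_\mu=c_\mu\,\mathbf{P}_\mu+(\text{terms of strictly lower degree})$ with $c_\mu\neq 0$; as $\{\mathbf{P}_\mu\}$ is a basis of $\Lambda^*$, the transition matrix to $\{f_\mu\}$ is triangular for the degree filtration with invertible diagonal, and therefore $\{f_\mu\}$ is also a basis of $\Lambda^*$.
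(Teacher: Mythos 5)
The paper does not actually prove this statement: Theorem \ref{Th:KO} is quoted from Kerov--Olshanski \cite{KO94} without argument, so there is no in-text proof to compare yours against. Judged on its own, your reconstruction follows the standard route and is essentially sound. The filtration argument for freeness (the leading form of $\mathbf{P}_\ell$ in the associated graded algebra is $p_\ell$, and $\Lambda=\mathbb{Q}[p_1,p_2,\dots]$) is exactly how this is done in the literature, and the concluding triangularity $f_\mu=c_\mu\mathbf{P}_\mu+(\text{lower degree})$ with $c_\mu\neq 0$ (indeed $c_\mu=\prod_i\mu_i^{-1}$ with the usual normalization; note $f_2=\tfrac12\mathbf{P}_2$ exactly) correctly yields the basis claim, since each filtered piece $\Lambda^*_{\leq d}$ is finite-dimensional. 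You also correctly isolate the one genuinely delicate point: the Jucys--Murphy/content computation only shows that, for each fixed $n$, the central character of a class sum is a shifted symmetric polynomial in $\Lambda^*(n)$, and the stability in $n$ --- that a single element of $\Lambda^*=\varprojlim\Lambda^*(n)$ represents $f_\mu$ for all $n\geq|\mu|$ --- requires an extra input. Deferring that input to Farahat--Higman polynomiality (or the Ivanov--Kerov partial-permutation algebra) is legitimate and non-circular, but it is doing most of the work for the membership claim, so your argument should be read as a correct reduction to classical facts rather than a self-contained proof; one would either flesh out the induction on the Farahat--Higman filtration starting from the $n$-uniform content formula for single cycles, or follow \cite{KO94} in exhibiting $f_\mu$ explicitly as a shifted symmetric expression via the Frobenius character formula.
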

	 
	 We now recall a remarkable theorem of Bloch and Okounkov.
	 Using Theorem \ref{Th:KO} one provides the algebra $\Lambda^*$
	 with the weight grading by assigning to $ \mathbf{P}_\ell$
	 the weight $k=\ell +1$. Let $f: \text{Part}(n)\to\mathbb{Q}$
	 be an arbitrary function on the set of all partitions.
     Its $q$-bracket is defined as the formal power series 
	 $$\langle f \rangle_q =\frac{\sum_\lambda f(\lambda)q^{|\lambda|}}{\sum_\lambda q^{|\lambda|}}
	 \in \mathbb{Q}[[q]]\;. $$
	 
	 \begin{theorem}[\cite{BO00}]\label{Th:OkBlo}
	 	If $f$ is a shifted symmetric function of weight $k$, then
	 	$\langle f \rangle_q$ is a quasi-modular form of weight $k$. 
	 \end{theorem}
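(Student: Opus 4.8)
The plan is to follow the argument of Bloch and Okounkov \cite{BO00}, whose four steps are: reduce to the shifted power sums; realize the $q$-bracket as a normalized trace on the semi-infinite wedge space; compute the resulting $n$-point correlation function in closed form in terms of a Jacobi theta function; and read off quasimodularity from the Taylor expansion of that theta function at $z=0$. For the first step, Theorem~\ref{Th:KO} presents $\Lambda^*$ as the polynomial ring on the $\mathbf P_\ell$ ($\ell\ge1$), graded by $\operatorname{wt}\mathbf P_\ell=\ell+1$; since $f\mapsto\langle f\rangle_q$ is linear and the quasimodular forms of a fixed weight form a vector space, it suffices to treat $f=\mathbf P_\mu=\prod_i\mathbf P_{\mu_i}$, for which $k=|\mu|+\ell(\mu)$. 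I would package these into the generating operator $\mathbf P(z)=\sum_{\ell\ge1}\mathbf P_\ell\, z^\ell/\ell!$ and study the multivariable bracket
\[
F_n(z_1,\dots,z_n;q)\;\bydef\;\Big\langle\,\prod_{j=1}^n\mathbf P(z_j)\,\Big\rangle_q ,
\]
from which $\langle\mathbf P_\mu\rangle_q$ is read off as the coefficient of $\prod_j z_j^{\mu_j}/\mu_j!$. For the second step I would pass to the charge-zero part of the semi-infinite wedge space, with basis $\underline\lambda$ indexed by partitions and energy operator $H$ acting by $H\underline\lambda=|\lambda|\,\underline\lambda$, so that $\sum_\lambda q^{|\lambda|}=\operatorname{Tr}(q^H)=\prod_{m\ge1}(1-q^m)^{-1}$ and $\langle f\rangle_q=\operatorname{Tr}(\widehat f\, q^H)/\operatorname{Tr}(q^H)$, with $\widehat f$ the diagonal operator carrying the eigenvalue $f(\lambda)$ on $\underline\lambda$; here $\mathbf P(z)$ is the renormalized Okounkov operator $\mathcal E_0(z)-1/\varsigma(z)$, $\varsigma(z)=e^{z/2}-e^{-z/2}$.

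The technical core, and the step I expect to be the main obstacle, is the evaluation of $F_n$. Using the commutation relations of the operators $\mathcal E_r(z)$ in the infinite wedge together with the trace formula for their products --- equivalently, fermionic Wick calculus with careful tracking of the $\zeta$-regularized constants --- one obtains closed formulas. For $n=1$,
\[
F_1(z;q)=\frac{1}{\Theta(z;q)}-\frac{1}{\varsigma(z)},\qquad
\Theta(z;q)=\varsigma(z)\prod_{m\ge1}\frac{(1-q^m e^{z})(1-q^m e^{-z})}{(1-q^m)^2},
\]
and for general $n$ the bracket $F_n$ is the Bloch--Okounkov $n$-point function: a finite combination of $\Theta(z;q)$, its $z$-derivatives and its reciprocal, organized as a sum over set partitions of $\{1,\dots,n\}$. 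Everything surrounding this computation is comparatively formal.

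For the last step, set $q=e^{2\pi i\tau}$ and rescale $z$: then $\Theta(z;q)$ is, up to an elementary exponential factor, the odd Jacobi theta function, and $\partial_z\log\Theta$ has a Taylor expansion at $z=0$ whose coefficient of $z^{2k-1}$ equals, up to a rational constant, a rational multiple of the normalized Eisenstein series $E_{2k}(q)$ --- modular for $2k\ge4$, quasimodular for $2k=2$. The additive-constant ambiguities cancel against the elementary term $1/\varsigma(z)$, so the Taylor coefficients of $F_1(z;q)$, and hence of all the $F_n$, lie in the ring $\widetilde{M}_*=\Complex[E_2,E_4,E_6]$ of quasimodular forms; moreover, assigning weight $-1$ to each variable $z_j$ makes $F_n$ homogeneous of weight $n$, so the coefficient of $\prod_j z_j^{\ell_j}$ is quasimodular of weight $\sum_j(\ell_j+1)$. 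Extracting the coefficient of $\prod_j z_j^{\mu_j}/\mu_j!$ therefore exhibits $\langle\mathbf P_\mu\rangle_q$ as a quasimodular form of weight $|\mu|+\ell(\mu)=k$, which together with the first step proves the theorem. As an alternative to the last two steps, one can follow the reformulation via Jacobi forms: show directly that the generating bracket $\langle\prod_j\mathbf P(z_j)\rangle_q$ is a meromorphic Jacobi form, and invoke the structure theorem expressing the Taylor coefficients of such forms at $z=0$ through quasimodular forms.
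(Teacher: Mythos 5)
The paper offers no proof of this statement---it is quoted directly from Bloch--Okounkov \cite{BO00}---and your outline is a faithful reconstruction of that source's argument: reduction to the monomials $\mathbf{P}_\mu$ via the generating series $\mathbf{P}(z)$, realization of the $q$-bracket as a normalized trace on the charge-zero part of the infinite wedge, the closed theta-function formula for the $n$-point functions, and extraction of quasimodularity (with the correct weight bookkeeping, $z_j$ of weight $-1$) from the Taylor coefficients of $\Theta$ and $1/\Theta$ at $z=0$. The only cosmetic slip is that the Bloch--Okounkov $n$-point formula is a sum over permutations of $\{1,\dots,n\}$ of determinants in derivatives of $\Theta$, not over set partitions (those enter only when passing between connected and disconnected correlators); this does not affect the strategy.
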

	 
	 Similarly to the algebra of shifted symmetric polynomials,
	 we can define the algebra of \emph{$\alpha$-shifted symmetric polynomials}
	 $\Lambda^*_\alpha$.  The algebra of $\alpha$-shifted symmetric polynomials is defined as
	 $\Lambda^*_\alpha=\lim\limits_{\longleftarrow}\Lambda^*_\alpha(n),$ where
	 $\Lambda^*_\alpha(n)$ is the algebra of symmetric polynomials in the $n$
	 variables $\lambda_1 -\frac{1}{\alpha} ,\dots, \lambda_n-\frac{n}{\alpha}$.
	 The projective limit is taken with respect to the homomorphisms setting the last
	 variable equal to zero. In particular $\nu_\Real(\lambda)$ belongs to $\Lambda^*_2$.
	 To continue the parallel with the classical theory,
	 we can introduce the family of 2-shifted symmetric functions
	 \begin{equation}
	 	\mathbf{R}_\ell(\lambda) = \sum_{i=1}^\infty(\lambda_i-i/2)^\ell-(-i/2)^\ell
	 	\text{ and } \mathbf{R}_\lambda =\prod_{i}\mathbf{R}_{\lambda_i}
	 \end{equation}
	 so that $\nu_\Real(\lambda)=\mathbf{R}_2(\lambda)$.
	 
	 We also introduce the \emph{central zonal function} that are defined as
	 \begin{equation}\label{Eq:CentChar}
	 	f^\Real_\sigma(\lambda)={|K_\sigma|}\omega^\lambda(\sigma).
	 \end{equation}
	 Here $|K_\sigma|$ is the cardinality of the double coset class of $\sigma$
	 in $H_n\backslash S_{2n}/H_n$, $H_n$ is the hyperoctahedral
	 group, and $\omega^\lambda(\sigma)$ is the zonal function indexed
	 by $\lambda$ applied to $\sigma$ (see the next section for details).
	 We write $f_k^\Real(\lambda)$ in the case 
	 where $\sigma$ is a $k-$cycle. In particular we have 
	 $$f^\Real_2=\nu_\Real(\lambda),$$
	 see \cite{Mac95} Chapter VII.
     
	 Similarly to Theorem \ref{Th:KO}, we have for $2$-shifted symmetric functions (in fact, for $\alpha$-shifted symmetric functions)
	 \begin{theorem}[\cite{Las08}]\label{Th:Lass}
	 	The algebra $\Lambda^*_2$ is freely generated by the $\mathbf{R}_\ell$
	 	with $\ell\geq1$. The functions $f^\Real_\mu$ belong to $\Lambda^*_2$ and form a basis of $\Lambda^*_2$
	 	as $\mu$ ranges over all partitions.
	 \end{theorem}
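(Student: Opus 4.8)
The plan is to parallel the Kerov--Olshanski proof of Theorem~\ref{Th:KO}, with the irreducible characters $\chi^\lambda$ of $S_n$ replaced by the zonal spherical functions $\omega^\lambda$ of the Gelfand pair $(S_{2n},H_n)$, the conjugacy classes of $S_n$ replaced by the double cosets in $H_n\backslash S_{2n}/H_n$, and the shifted power sums $\mathbf{P}_\ell$ replaced by the $2$-shifted power sums $\mathbf{R}_\ell$; this essentially reproduces Lassalle's argument from \cite{Las08}. The proof has a soft part (free generation by the $\mathbf{R}_\ell$), a hard part (polynomiality of the $f^\Real_\mu$), and a formal part (the basis statement), and I would carry them out in that order.

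For the first part I would filter $\Lambda^*_2$ by polynomial degree: viewing an element, for $n\gg0$, as a polynomial in $\lambda_1,\dots,\lambda_n$, it has degree $\le d$ exactly when its top homogeneous component has degree $\le d$, and that component is a stable symmetric function, hence lies in $\Lambda_d$. The resulting top-component map is an injective graded-algebra homomorphism $\operatorname{gr}\Lambda^*_2\hookrightarrow\Lambda$. Since $\mathbf{R}_\ell$ has degree $\ell$ with top component the power sum $p_\ell$, the monomials $\mathbf{R}_{\mu_1}\cdots\mathbf{R}_{\mu_r}$ have top components $p_\mu$, which already span $\Lambda$; hence the map is an isomorphism $\operatorname{gr}\Lambda^*_2\cong\Lambda$, the $\mathbf{R}_\ell$ are algebraically independent, and they generate $\Lambda^*_2$ because a filtered subalgebra with full associated graded must coincide with the whole algebra. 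This part is pure linear algebra, identical to the $\alpha=1$ case, and uses nothing about zonal polynomials.

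The substance of the theorem is the assertion $f^\Real_\mu\in\Lambda^*_2$: for fixed $\mu$, the function $\lambda\mapsto|K_\mu|\,\omega^\lambda(\mu)$ has to be shown to be a $2$-shifted symmetric polynomial of degree at most $|\mu|$. Since the zonal polynomial is the Jack polynomial $P^{(2)}_\lambda$ up to a scalar, $f^\Real_\mu$ is, up to normalization, the Jack character at $\alpha=2$ read off from the power-sum expansion $P^{(2)}_\lambda=\sum_\mu\theta^\lambda_\mu(2)\,p_\mu$ through the Gelfand-pair dictionary of \cite{Mac95}, Ch.~VII. I would then invoke the $\alpha=2$ case of Lassalle's structure theory of Jack characters \cite{Las08} --- equivalently, a ``partial matchings'' counterpart of the Ivanov--Kerov algebra of partial permutations attached to the commutative Hecke algebra $\mathbb{C}[H_n\backslash S_{2n}/H_n]$ --- which delivers the polynomiality, the degree bound, and the leading-term formula
\begin{equation}\label{Eq:LeadFReal}
f^\Real_\mu=c_\mu\,\mathbf{R}_{\mu_1}\cdots\mathbf{R}_{\mu_r}+(\text{terms of lower degree}),\qquad c_\mu\in\mathbb{Q}\setminus\{0\}.
\end{equation}
This is precisely where the combinatorics of the pair $(S_{2n},H_n)$ genuinely enters, rather than being a formal byproduct of the first part, and it is the step I expect to be the main obstacle: in the classical case the analogous polynomiality is the deepest ingredient behind Theorem~\ref{Th:KO} \cite{KO94}, and at $\alpha=2$ it is Lassalle's theorem, which I would invoke as a black box rather than reprove.

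Finally, granting \eqref{Eq:LeadFReal}, the basis statement is immediate. By the first part the monomials $\mathbf{R}_{\mu_1}\cdots\mathbf{R}_{\mu_r}$, indexed by partitions $\mu$, form a basis of $\Lambda^*_2$, the monomial attached to $\mu$ having degree $|\mu|$; by \eqref{Eq:LeadFReal} the matrix expressing the $f^\Real_\mu$ in this basis is block-triangular with respect to $|\mu|$ and carries the nonzero scalars $c_\mu$ on its diagonal, hence is invertible. Therefore $\{f^\Real_\mu\}$ is a basis of $\Lambda^*_2$ as $\mu$ ranges over all partitions, which would complete the proof.
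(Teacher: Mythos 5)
You should first be aware that the paper does not prove this statement at all: Theorem \ref{Th:Lass} is quoted verbatim from \cite{Las08} as an external input, exactly parallel to the way Theorem \ref{Th:KO} is quoted from \cite{KO94}. So there is no in-paper argument to measure your proposal against, and the relevant question is whether your outline would stand on its own.

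Two of your three steps do. The filtration argument for free generation by the $\mathbf{R}_\ell$ (top homogeneous components are stable symmetric functions, $\mathbf{R}_\ell\mapsto p_\ell$ identifies the associated graded with $\Lambda$, hence algebraic independence and generation) is correct and genuinely independent of the zonal combinatorics, and the final triangularity argument deducing the basis statement from the leading-term formula is also correct as formal linear algebra. The problem is the middle step, which you yourself identify as the entire substance of the theorem: the polynomiality of $\lambda\mapsto f^\Real_\mu(\lambda)$ as a $2$-shifted symmetric function, together with the degree bound and the nonvanishing leading coefficient $c_\mu$. You propose to ``invoke the $\alpha=2$ case of Lassalle's structure theory of Jack characters \cite{Las08} as a black box'' --- but that black box \emph{is} the theorem being proved; the statement is attributed to \cite{Las08} precisely because this polynomiality is its content. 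As written, your argument therefore reduces the claim to itself and is circular rather than incomplete in some repairable technical detail. To make it a genuine proof you would need to actually establish polynomiality of $f^\Real_\mu$, e.g.\ via the partial-matchings analogue of the Ivanov--Kerov algebra for the Hecke algebra of $(S_{2n},H_n)$ that you allude to, or via Lassalle's own manipulation of the Pieri-type recurrences for Jack polynomials at $\alpha=2$; either route is substantially harder than everything else in your write-up, and neither is sketched beyond naming it.
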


     It is also worth mentioning that the quantities $\nu(\lambda)$
     and $\nu_\Real(\lambda)$ are parts of a family of eigenvalues
     of the Beltrami-Laplace operator, see \cite{Mac95} chapter
     VI.
     
	 Given the striking resemblance between the worlds of
     complex and real ramified coverings of the torus, together with the fact 
	 that $F_2(q)$ is quantum modular\footnote{Numerical evidence suggests that $F_3(q)$ is also
     quantum modular.}, it is natural to expect that the following analog  
     of Theorem \ref{Th:OkBlo} holds:
     \begin{conjecture}
	 	If $f$ is a 2-shifted symmetric function of weight $k$, then
	 	$\langle f \rangle_q$ is a quantum modular form of weight $k$. 	
     \end{conjecture}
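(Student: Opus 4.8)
The plan is to prove the analog of the Bloch--Okounkov theorem (Theorem~\ref{Th:OkBlo}) in the $\alpha=2$ (zonal) world, following the same three-step scheme that underlies the classical proof: reduce the problem to a single $n$-point generating function, find a closed form for that function, and then read off the modular nature of its Taylor coefficients.

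\emph{Step 1: reduction to the generators.} By Theorem~\ref{Th:Lass} the algebra $\Lambda^{*}_{2}$ is freely generated by the functions $\mathbf{R}_{\ell}$, $\ell\ge 1$, and the weight is additive under multiplication, so each graded piece of $\Lambda^{*}_{2}$ is finite-dimensional and spanned by the monomials $\mathbf{R}_{\mu}=\prod_{i}\mathbf{R}_{\mu_{i}}$. By linearity of the $q$-bracket it therefore suffices to prove the statement for $f=\mathbf{R}_{\mu}$. Equivalently, set $\Phi(z)(\lambda)=\sum_{i\ge 1}\bigl(z^{\lambda_{i}-i/2}-z^{-i/2}\bigr)$ (a finite sum, since the $i$-th term vanishes once $\lambda_{i}=0$) and introduce the $n$-point function
\[
\mathsf{F}_{\Real}(z_{1},\dots,z_{n};q)=\Bigl\langle\ \prod_{j=1}^{n}\Phi(z_{j})\ \Bigr\rangle_{q}\,,
\]
whose Taylor coefficients at $z_{1}=\dots=z_{n}=1$ (in the variables $\log z_{j}$) are exactly the $q$-brackets $\langle\mathbf{R}_{\mu_{1}}\cdots\mathbf{R}_{\mu_{n}}\rangle_{q}$. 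It is enough to show that every such coefficient is a quantum modular form of the corresponding weight.

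\emph{Step 2: a closed form for $\mathsf{F}_{\Real}$.} In the classical case one shows that the $n$-point function is an explicit rational expression in Jacobi theta functions, which forces the coefficients to be quasimodular; that derivation rests on the free-fermion / $\widehat{\mathfrak{gl}}_{\infty}$ formalism, which has no direct counterpart for zonal polynomials. The plan is to replace it by the combinatorics of the Gelfand pair $(S_{2n},H_{n})$: use the Cauchy and Pieri rules for zonal polynomials (Jack polynomials at $\alpha=2$, see~\cite{Mac95}, Ch.~VII) together with principal specialization to rewrite $\sum_{\lambda}q^{|\lambda|}\prod_{j}\Phi(z_{j})(\lambda)$ as a $q$-deformed infinite product, or, alternatively, to run the transfer-operator (``cut-and-join'') computation in the centre of the double-coset algebra $\mathbb{C}[H_{n}\backslash S_{2n}/H_{n}]$, whose relevant eigenvalues are the $\nu_{\Real}(\lambda)$ of Proposition~\ref{Prop:ContentReal}. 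A more geometric variant is to feed the propagator of~\cite{HM24} into the (in each fixed case, finite) family of relevant tropical graphs, exactly as was done above for $F_{2}$ and for the stratum $\mathcal{H}(2,2)$; the key point there is that the edge weight produced by that propagator is itself built from Eisenstein-type $q$-series, so that a finite graph sum cannot leave a fixed ring of $q$-series.

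\emph{Step 3: identifying the ring, and the main obstacle.} The half-integer shifts $-i/2$ --- in contrast to the integer steps of the classical theory --- are precisely what brings in the \emph{odd}-weight Eisenstein series $E_{3},E_{5},\dots$, as is already visible in $F_{2}=\tfrac12(E_{3}-E_{2})$ and in $\sum_{n}N_{n}^{\Real}(2,2)q^{n}=3E_{2}^{2}+\tfrac76E_{2}-E_{3}-\tfrac16E_{4}$. One expects the Taylor coefficients of $\mathsf{F}_{\Real}$ to be polynomials in $E_{2}$ and the Eisenstein series $E_{k}$ with $k\ge 3$, graded by weight; since $E_{2}$ and the $E_{2k}$ are (quasi)modular, the $E_{2k+1}$ are quantum modular in the sense of Zagier~\cite{Zagier2010}, and products and $E_{2}$-multiples of quantum modular forms with (quasi)modular forms remain quantum (quasi)modular with additive weight, membership of the coefficients in that ring would yield the conjecture. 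Establishing this last inclusion --- i.e.\ that the product/transfer-operator expansion of Step~2 never leaves the span of (quasimodular)$\times$(odd Eisenstein) $q$-series --- is the crux: it is exactly the place where the classical argument invokes the rigidity of the explicit theta-function formula, and here one must instead extract the same rigidity either from the zonal Cauchy identities or from a closed form for the~\cite{HM24} propagator. A successful execution would in particular settle Conjecture~\ref{Conj:QMF} for these strata and pin down $F_{3}(q)$.
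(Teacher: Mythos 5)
The statement you are trying to prove is stated in the paper as a \emph{conjecture}, explicitly flagged as ``work in progress''; the authors give no proof, only the supporting evidence of $F_2(q)=\tfrac12(E_3-E_2)$ and numerical evidence for $F_3(q)$. So there is no proof in the paper to compare against, and your text must be judged on whether it itself constitutes a proof. It does not: it is a research plan whose decisive step is openly left open. Step~1 (reduction to the monomials $\mathbf{R}_\mu$ via Theorem~\ref{Th:Lass} and linearity of the $q$-bracket) is sound. But Step~2 never produces the closed form for the $n$-point function that the whole argument hinges on --- you list three candidate routes (zonal Cauchy/Pieri identities, a cut-and-join computation in the double-coset algebra, the propagator of \cite{HM24}) without carrying any of them out, and you yourself identify the resulting inclusion in Step~3 as ``the crux.'' In the classical Bloch--Okounkov proof the theta-function formula is not an optional convenience; it is the entire content of the theorem, and no $\alpha=2$ analogue of it is currently known. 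Asserting that ``a finite graph sum cannot leave a fixed ring of $q$-series'' presupposes a closed form for the \cite{HM24} propagator that you have not exhibited.

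There is also a structural problem in Step~3 that would persist even if Step~2 were completed. You assume that the class of quantum modular forms of given weight is closed under multiplication by quasimodular forms and under taking products, with weights adding. Quantum modular forms in Zagier's sense are defined by a cocycle condition on $\mathbb{Q}$ with a regularity requirement on the error term, and this class is not known (and not obviously true) to be a graded ring stable under multiplication by $E_2$ and the $E_{2k}$; the paper itself is careful to formulate the quantum modularity only of the individual generating functions, not of a ring they generate. Without a precise definition of ``quantum modular of weight $k$'' that makes your ring-theoretic closure statement true, the final inference in Step~3 does not go through. In short: the reduction in Step~1 is correct, but the proposal contains no proof of the conjecture, and the two missing ingredients (an explicit zonal $n$-point function, and closure properties of quantum modular forms) are each substantial open problems rather than routine gaps.
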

	 This conjecture currently has a status of work in progress.
	  
	\subsection{Involutions without fixed point and the Gelfand pair $(S_{2n},H_n)$}
	Denote by $\mathcal{O}_\Real^{\circ}(\lambda)$ 
	the number of real origami in the sense of (slightly modified) Definition \ref{Def:OrigamiCombi},
	where
	\begin{itemize}
		\item  $\tau:=(1\,\bar 1)\dots (n\,\bar n)$ is \emph{fixed},
		\item transitivity condition \emph{not required},
		\item $\lambda=[\lambda_1\ldots\lambda_s]$ is a partition of $n$ such that the commutator
        $[h,v]$ has cycle structure $[\lambda_1^2\ldots\lambda_s^2]$, see Remark \ref{Rem:Profil}.
	\end{itemize}

    Our goal for the rest of this section is to prove the identity
	
	\begin{equation}
		\sum_{\rho\vdash n} Z_\rho = \frac{1}{2^nn!}\sum_{\lambda\vdash n}
		\mathcal{O}_\Real^{\circ}(\lambda) p_\lambda\,,
	\end{equation}
	where $Z_\rho$ are zonal polynomials and $p_\lambda$ are (products of) power sums.
	
        We start by recalling the necessary results from \cite{BF24}.
	Let $S_{2n}$ be the symmetric group on $2n$ elements $\{1,\bar1,\dots,n,\bar n\}$
	with ordering $1<\bar1<\dots<n<\bar n$, and $\tau$ is fixed as above:
	\begin{equation}\label{Eq:tau}
		\tau = (1,\bar1)\dots(n,\bar n).
	\end{equation}
	
	\begin{definition}
		The hyperoctahedral group $H_n$ is the centralizer of $\tau$ in the 
		symmetric group $S_{2n}$.
	\end{definition}
    In other words, $H_n$ consists of the elements $\sigma\in S_{2n}$ satisfying
		$\tau\sigma\tau\sigma^{-1}=id$. The order of $H_n$ is $2^nn!$.
	
	\begin{example}
		Some elements of $S_6$ belonging to $H_3$:
		\begin{equation}
			(1,2,3)(\bar 1,\bar 2,\bar 3);\ (1,2,3,\bar 1,\bar 2,\bar 3);\ 
			(1,2)(\bar 1,\bar 2)(3)(\bar 3); (1,3,2)(\bar 1, \bar 3, \bar 2),(3\bar 3)\dots
		\end{equation}
	\end{example}
	
	Consider now the subset $C^{\sim}(\tau) = \{\sigma \in S_{2n} \mid \tau \sigma =
	\sigma^{-1} \tau\}$ of $S_{2n}$.
	
	\begin{lemma}[see \cite{BF24} for details]
		Let $\sigma = c_1 \dots c_m \in C^{\sim}(\tau)$, where $c_1, \ldots, c_m$ are
independent cycles. Then for every $i$
		\begin{itemize}
			\item either there exists $j \ne i$ such that $c_i = (u_1 \ldots
			u_k)$ and $c_j = (\tau u_k \ldots \tau u_1)$,
                \item or $c_i$ has even length $2k$ and looks like $c_i = (u_1 \ldots
			u_k\, \tau u_k \ldots \tau u_1)$; 
		\end{itemize}
        here $\{u_1,\ldots,u_{2n}\}$ is a set of elements permuted by $S_{2n}$.
	\end{lemma}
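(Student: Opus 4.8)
\emph{Proof proposal.} The plan is to read off the cycle structure of $\sigma\in C^{\sim}(\tau)$ from the uniqueness of the disjoint cycle decomposition. First I would rewrite the defining relation: since $\tau^2=\mathrm{id}$, the condition $\tau\sigma=\sigma^{-1}\tau$ is equivalent to $\tau\sigma\tau^{-1}=\sigma^{-1}$, i.e.\ conjugation by $\tau$ inverts $\sigma$. Writing $\sigma=c_1\cdots c_m$ as a product of disjoint nontrivial cycles, conjugation by $\tau$ gives the disjoint decomposition $\sigma^{-1}=\tau\sigma\tau^{-1}=(\tau c_1\tau^{-1})\cdots(\tau c_m\tau^{-1})$, while inverting termwise gives $\sigma^{-1}=c_1^{-1}\cdots c_m^{-1}$. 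By uniqueness of the cycle decomposition there is a permutation $\pi\in S_m$ with $\tau c_i\tau^{-1}=c_{\pi(i)}^{-1}$ for every $i$; applying $\tau(\cdot)\tau^{-1}$ once more and taking inverses shows $\pi^2=\mathrm{id}$, so $\pi$ is an involution on the index set. Fixed points of $\sigma$ are handled the same way: since $\tau$ is fixed-point-free, they come in pairs $\{u,\tau u\}$ and correspond to the first alternative with $k=1$.

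The two alternatives in the statement then correspond exactly to $\pi(i)\ne i$ and $\pi(i)=i$. If $\pi(i)\ne i$, set $c_i=(u_1\,u_2\,\ldots\,u_k)$; then $\tau c_i\tau^{-1}=(\tau u_1\,\ldots\,\tau u_k)$, and the relation $\tau c_i\tau^{-1}=c_{\pi(i)}^{-1}$ forces $c_{\pi(i)}=(\tau u_k\,\ldots\,\tau u_1)$, which is the first bullet with $j=\pi(i)\ne i$ (these are genuinely distinct cycles of the decomposition).

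If $\pi(i)=i$, then $\tau c_i\tau^{-1}=c_i^{-1}$. Comparing supports, $\tau$ maps $\operatorname{supp}c_i$ onto itself, and since $\tau$ is a fixed-point-free involution of the whole set it restricts to a fixed-point-free involution of $\operatorname{supp}c_i$, so $|\operatorname{supp}c_i|$ is even, say $2k$. Choosing a base point $w\in\operatorname{supp}c_i$ and writing the support as $\{c_i^{\,j}w:\ j\in\mathbb{Z}/2k\mathbb{Z}\}$, the identity $c_i\tau=\tau c_i^{-1}$ translates into $\tau(c_i^{\,j}w)=c_i^{\,s-j}w$ for a fixed offset $s\in\mathbb{Z}/2k\mathbb{Z}$; fixed-point-freeness of $\tau$ forces $s$ to be odd. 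Replacing $w$ by $c_i^{\,t}w$ changes $s$ to $s-2t$, so I can normalize to $s=2k-1$, and for that choice the cyclic word for $c_i$ reads $(u_1\,u_2\,\ldots\,u_k\,\tau u_k\,\ldots\,\tau u_1)$ with $u_j=c_i^{\,j-1}w$, which is the second bullet.

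The routine part is the cycle bookkeeping in the two cases; the only point requiring a small argument is the normalization in the fixed case — showing that the ``reflection offset'' $s$ is odd (this is exactly where $\tau$ having no fixed points is used) and that it can be brought to a standard value by rotating the cycle. I expect this normalization, together with the care needed to treat the fixed points of $\sigma$ consistently with the $k=1$ instance of the first alternative, to be the main, though still elementary, obstacle.
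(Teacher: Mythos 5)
The paper does not prove this lemma itself --- it simply cites \cite{BF24} for the details --- so there is no internal proof to compare against. Your argument is correct and self-contained: rewriting $\tau\sigma=\sigma^{-1}\tau$ as $\tau\sigma\tau^{-1}=\sigma^{-1}$, invoking uniqueness of the disjoint cycle decomposition to get an involution $\pi$ on the index set with $\tau c_i\tau^{-1}=c_{\pi(i)}^{-1}$, and splitting into the cases $\pi(i)\neq i$ (the $\tau$-symmetric pair) and $\pi(i)=i$ (the self-symmetric cycle) is exactly the right skeleton. The one step that genuinely needs care --- showing in the fixed case that the offset $s$ in $\tau(c_i^{\,j}w)=c_i^{\,s-j}w$ is odd because $\tau$ has no fixed points, and that rotating the base point shifts $s$ by even amounts so it can be normalized to $-1$ --- is handled correctly, and your treatment of the fixed points of $\sigma$ as $k=1$ instances of the first alternative is consistent with the statement. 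I see no gap.
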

	
	In the first case we say that the cycles $c_i$ and $c_j$ are
	$\tau$-symmetric, and in the second case the cycle $c_i$ is
	$\tau$-self-symmetric.
	
    \begin{definition}
    \label{It:AlphaCycl} Denote by $B_n^\sim$ the set of permutations
    $\sigma \in C^{\sim}(\tau)$ such that their cycle decomposition
    contains no $\tau$-self-symmetric cycles.
    \end{definition}

The properties of $B_n^\sim$ that we need are summarized in the following two propositions.
	
\begin{proposition}\label{Prop:PropertiesBn} 
    Any element $\sigma\in B_n^\sim$ satisfies $\sigma\tau\sigma\tau =id$.
    In particular, both $\sigma\tau$ and $\tau\sigma$ are involutions \emph{without fixed points}. Moreover, each of the following two sets is in bijection with $B_n^\sim$: 
    \begin{enumerate}
    \item\label{It:Coset} 
    the set $S_{2n}/H_n$ of right cosets of $H_n$ in $S_{2n}$;
    \item\label{It:Invol} the set of fixed-point-free involutions $\iota \in S_{2n}$.
    \end{enumerate}
    In particular, all these sets have cardinality $(2n-1)!!$.
	\end{proposition}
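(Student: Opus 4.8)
The plan is to route everything through the single assignment $\Phi\colon\sigma\mapsto\sigma\tau$, showing it restricts to a bijection from $B_n^\sim$ onto the set of fixed-point-free involutions of $S_{2n}$, and then to match the latter with $S_{2n}/H_n$ through the conjugation action. I start with the purely algebraic part. For any $\sigma\in C^{\sim}(\tau)$ the defining relation $\tau\sigma=\sigma^{-1}\tau$, multiplied on the right by $\tau$ and using $\tau^2=\mathrm{id}$, yields $\tau\sigma\tau=\sigma^{-1}$; hence $\sigma\tau\sigma\tau=\sigma(\tau\sigma\tau)=\sigma\sigma^{-1}=\mathrm{id}$ and, symmetrically, $\tau\sigma\tau\sigma=(\tau\sigma\tau)\sigma=\sigma^{-1}\sigma=\mathrm{id}$. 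So for every $\sigma\in B_n^\sim\subseteq C^{\sim}(\tau)$ both $\sigma\tau$ and $\tau\sigma=\tau(\sigma\tau)\tau^{-1}$ are involutions. Conversely, any involution $\iota$ satisfies $\iota\tau\in C^{\sim}(\tau)$, since $\tau(\iota\tau)\tau=\tau\iota=(\iota\tau)^{-1}$; thus $\Phi$ is already a bijection between $C^{\sim}(\tau)$ and the set of elements of $S_{2n}$ of order dividing two, with inverse again right multiplication by $\tau$.

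The one genuinely combinatorial point is that $\sigma\tau$ is fixed-point-free exactly when $\sigma\in B_n^\sim$. If $\sigma$ contains a $\tau$-self-symmetric cycle $(u_1\ldots u_k\,\tau u_k\ldots\tau u_1)$, then $\sigma(\tau u_1)=u_1$, so $\sigma\tau$ fixes $u_1$. Conversely, suppose $\sigma\tau(x)=x$, i.e.\ $\sigma(\tau x)=x$; then $\tau x=\sigma^{-1}(x)$, so $x$ and $\tau x$ sit in one cycle $c=(w_1\ldots w_l)$ of $\sigma$ with $w_1=\tau x$ and $w_2=x$. Conjugating $c$ by $\tau$ produces a cycle $(\tau w_1\ldots\tau w_l)$ of $\tau\sigma\tau=\sigma^{-1}$, whose reversal $(\tau w_l\ldots\tau w_1)$ is therefore a cycle of $\sigma$; it contains $\tau w_1=x$, so by disjointness of cycles it equals $c$, which means $c$ is $\tau$-self-symmetric --- impossible for $\sigma\in B_n^\sim$. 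Together with $\sigma\tau\sigma\tau=\mathrm{id}$ from the first paragraph, this proves the first assertion: for $\sigma\in B_n^\sim$ both $\sigma\tau$ and its $\tau$-conjugate $\tau\sigma$ are fixed-point-free involutions.

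It remains to assemble the bijections. By the previous two paragraphs $\Phi$ maps $B_n^\sim$ into the set of fixed-point-free involutions; it is injective (since right multiplication by $\tau$ is), and onto, because for a fixed-point-free involution $\iota$ the element $\sigma:=\iota\tau$ lies in $C^{\sim}(\tau)$ and cannot have a $\tau$-self-symmetric cycle (else $\sigma\tau=\iota$ would have a fixed point), so $\sigma\in B_n^\sim$ by Definition \ref{It:AlphaCycl} and $\Phi(\sigma)=\iota$; this gives the bijection with fixed-point-free involutions. For the coset statement, the fixed-point-free involutions of $S_{2n}$ are precisely the elements of cycle type $[2^n]$, a single conjugacy class containing $\tau$, whose stabilizer under conjugation is by definition $H_n$; hence $gH_n\mapsto g\tau g^{-1}$ is a bijection $S_{2n}/H_n\to\{\text{fixed-point-free involutions}\}$, and composing with $\Phi^{-1}$ identifies $S_{2n}/H_n$ with $B_n^\sim$. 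The common cardinality is $|S_{2n}/H_n|=(2n)!/|H_n|=(2n)!/(2^nn!)=(2n-1)!!$, using $|H_n|=2^nn!$. The only step requiring care is the cycle-tracing argument of the second paragraph; everything else is formal manipulation in $S_{2n}$.
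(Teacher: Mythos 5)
Your proof is correct. Note that the paper itself does not prove this proposition: it is stated as a result recalled from \cite{BF24} (together with the preceding Lemma on the cycle structure of elements of $C^{\sim}(\tau)$), so there is no in-paper argument to compare against. Your route --- $\Phi(\sigma)=\sigma\tau$ as a bijection onto fixed-point-free involutions, then orbit--stabilizer for the conjugacy class of $\tau$ to identify that class with $S_{2n}/H_n$ --- is the natural one and all the formal manipulations check out, including the cardinality $(2n)!/(2^nn!)=(2n-1)!!$. The one place where you lean on the recalled Lemma without saying so is the final step of your cycle-tracing argument: from ``$c$ coincides with its own $\tau$-reversal'' you conclude ``$c$ is $\tau$-self-symmetric,'' which in the paper's terminology means $c$ has the specific palindromic form $(u_1\ldots u_k\,\tau u_k\ldots\tau u_1)$ of even length. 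This follows either from the dichotomy in the Lemma (a cycle equal to its own $\tau$-reversal cannot be paired with a \emph{different} cycle, hence falls into the self-symmetric case) or from a two-line direct computation: writing $u_j=\sigma^{j-1}(x)$ and using $\sigma^{-1}=\tau\sigma\tau$ together with $\sigma(\tau x)=x$ gives $u_{l+1-j}=\tau u_j$, and oddness of $l$ would force a fixed point of $\tau$. Making that half-line explicit would close the only soft spot; everything else is complete.
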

   
	\begin{example}
		Here are a few permutations belonging to $B_3^\sim$:
		\begin{equation}\label{Ex:Bntild}
			(1,2,3)(\bar 3,\bar 2,\bar 1);\ 
			(1,2)(\bar 1,\bar 2)(3)(\bar 3); (1,3,2)(\bar 2, \bar 3, \bar 1),\dots
		\end{equation}
		However, the following permutations \emph{do not} belong to $B_3^\sim$:
		\begin{equation}
			(1,2,\bar 2)(\bar 2, 2,\bar 1);\ 
			(1,\bar 1)(2,\bar 2)(3)(\bar 3); (1,\bar 3,3)(\bar 3,  3, \bar 1),\dots
		\end{equation}
	\end{example}

Fix a partition $\lambda\vdash n,\;\lambda=[\lambda_1 \ldots \lambda_s]$, and denote by $B_\lambda^{\sim} \subset B_n^{\sim}$
	the set of permutations whose decomposition into independent cycles
	consists of $s$ pairs of $\tau$-symmetric cycles of lengths
	$\lambda_1 \dots, \lambda_s$. Apparently, $B_n^{\sim} =
	\sqcup_{\lambda\vdash n} B_\lambda^{\sim}$.
	
	\begin{proposition}\label{Prop:ConjClass}
		$B_\lambda^{\sim}$ is an $H_n$-conjugacy class in $S_{2n}$.
	\end{proposition}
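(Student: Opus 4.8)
\section*{Proof proposal for Proposition \ref{Prop:ConjClass}}

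The plan is to show that $B_\lambda^{\sim}$ is exactly one orbit of the conjugation action of $H_n$ on $S_{2n}$. This breaks into two statements: (i) conjugation by an element of $H_n$ maps $B_\lambda^{\sim}$ into itself; and (ii) $H_n$ acts transitively on $B_\lambda^{\sim}$. Granting both, and noting that $B_\lambda^{\sim}\neq\varnothing$ (a configuration of $\tau$-symmetric cycle pairs of lengths $\lambda_1,\dots,\lambda_s$ is easily written down), it follows that $B_\lambda^{\sim}$ is precisely the $H_n$-conjugacy class of any of its members.

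For (i), I would first record that, since $\tau^2=\mathrm{id}$, the defining condition $\tau\sigma=\sigma^{-1}\tau$ of $C^{\sim}(\tau)$ is equivalent to $\tau\sigma\tau=\sigma^{-1}$. If $h\in H_n$, then $h\tau=\tau h$, and inserting $\tau^2=\mathrm{id}$ gives $\tau(h\sigma h^{-1})\tau=h(\tau\sigma\tau)h^{-1}=(h\sigma h^{-1})^{-1}$, so conjugation by $h$ preserves $C^{\sim}(\tau)$. Conjugation preserves cycle type, and using $h\tau=\tau h$ it sends a $\tau$-symmetric pair $\{(u_1\dots u_k),\,(\tau u_k\dots\tau u_1)\}$ to $\{(hu_1\dots hu_k),\,(\tau hu_k\dots\tau hu_1)\}$, which is again a $\tau$-symmetric pair of the same length; in particular no $\tau$-self-symmetric cycle is created. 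Hence $h\,B_\lambda^{\sim}h^{-1}\subseteq B_\lambda^{\sim}$.

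For (ii), let $\sigma,\sigma'\in B_\lambda^{\sim}$. By the structure lemma for $C^{\sim}(\tau)$ together with Definition \ref{It:AlphaCycl}, $\sigma$ decomposes into $s$ $\tau$-symmetric cycle pairs, the $i$-th supported on a block $U_i\sqcup\tau U_i$ with $|U_i|=\lambda_i$ and $U_i\cap\tau U_i=\varnothing$, and the $2s$ blocks partition $\{1,\bar 1,\dots,n,\bar n\}$; likewise $\sigma'$ has blocks $U_i'\sqcup\tau U_i'$, and after reindexing we may assume matched pairs have equal lengths. For each $i$ I fix a cyclic labeling $(u^i_1\dots u^i_{\lambda_i})$ of the chosen cycle of $\sigma$ on $U_i$ and a cyclic labeling $(w^i_1\dots w^i_{\lambda_i})$ of the chosen cycle of $\sigma'$ on $U_i'$, and define $h$ by $h(u^i_j)=w^i_j$ and $h(\tau u^i_j)=\tau w^i_j$. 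Because the blocks partition the underlying set, $h$ is a well-defined permutation in $S_{2n}$; it commutes with $\tau$ by construction, so $h\in H_n$; and a direct check on each $\tau$-symmetric pair gives $h\sigma h^{-1}=\sigma'$.

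Combining (i) and (ii) yields the proposition. I expect the only delicate point to be the bookkeeping in step (ii): verifying that the block-by-block prescription for $h$ is simultaneously consistent with the partition into $\tau$-symmetric blocks and with the relation $h\tau=\tau h$, and that the arbitrary rotational choices in the cyclic labelings still transport each cycle pair of $\sigma$ onto the corresponding pair of $\sigma'$. All of this is routine once the $\tau$-symmetric block picture supplied by the structure lemma is in place.
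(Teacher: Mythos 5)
Your argument is correct. The paper itself states Proposition \ref{Prop:ConjClass} without proof (deferring to \cite{BF24} and the standard theory of the Gelfand pair $(S_{2n},H_n)$), and what you give is exactly the expected direct argument: stability of $B_\lambda^{\sim}$ under $H_n$-conjugation follows from $h\tau=\tau h$ together with the fact that conjugation transports $\tau$-symmetric cycle pairs to $\tau$-symmetric cycle pairs, and transitivity follows by building $h$ block-by-block on the partition into supports $U_i\sqcup\tau U_i$ (which really is a partition, since by the structure lemma every cycle of an element of $B_n^{\sim}$ has a \emph{distinct} $\tau$-mirror partner with disjoint support). The rotational ambiguity in the cyclic labelings is harmless, as you note, since any single consistent choice already yields an $h\in H_n$ with $h\sigma h^{-1}=\sigma'$.
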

	
	\begin{example}
		
		In Example (\ref{Ex:Bntild}), we see that the first and last
		permutations belong to $B_{[3]}^\sim$, while the second one
		belongs to $B_{[2,1]}^\sim$.
		Furthermore, to illustrate Proposition \ref{Prop:PropertiesBn}, we notice that
		$\sigma=(1,2,3)(\bar 3,\bar 2,\bar 1)\in B_{[3]}$,
		and $\sigma\tau=(1\bar2)(2\bar3)(3\bar1)$ is an involution
		without fixed points.
	\end{example}
	
	Our next step will be to express the commutator
	\begin{equation}
		[v,h]= vhv^{-1}h^{-1}
	\end{equation}
	where $(v,h)\in B_n^\sim\times H_n$, in terms of certain 
	connection coefficients in the group algebra $\mathbb{C}[S_{2n}]$. This makes sense since 
        we conjugate an element of $B_n^\sim$ with an element of $H_n$, see Proposition \ref{Prop:ConjClass}). Thus, if $v\in B_\lambda^\sim$,
	then $hv^{-1}h^{-1}$ is also in $ B_\lambda^\sim$.
	
	Rewrite the commutator as follows:
        \begin{equation}\label{Eq:Commutator}
		[v,h]= (v\tau)(\tau hv^{-1}h^{-1}).
	\end{equation}
	
	We see that because $v$ and $hv^{-1}h^{-1}$ belong to $B_n^\sim$,
	then $(v\tau)(\tau hv^{-1}h^{-1})$ is just the product of two 
	involutions without fixed points by Proposition \ref{Prop:PropertiesBn}. 
	Furthermore, we can rewrite 
	\begin{equation}
		\tau hv^{-1}h^{-1} = h\tau v^{-1}h^{-1} = h v\tau h^{-1}\,.
	\end{equation}
	
        As we will see soon, the involutions without fixed points are closely related
	to the Hecke algebra of the Gelfand pair $(S_{2n},H_n)$. 
	Following \cite{GJ96} (see also \cite{Mac95}, Chapter VII, for details),
	   consider the set $\mathcal{P}_n=\{\iota_1,\dots,\iota_{(2n-1)!!}\}$ 
	of perfect matchings  on the set $\{1,\bar1\dots,n,\bar n\}$. 
	Denote by $G(\iota_i,\iota_j)$ the graph with vertices $\{1,\bar1,\dots n,\bar n\}$,
	whose edges are formed by $\iota_i$ and $\iota_j$.
	Each of the $s\geq 1$ connected components of $G(\iota_i,\iota_j)$ contains an even number of         edges $2\lambda_k,\; k=1,\ldots, s$, arranged in weakly decreasing order; see Fig.                   \ref{Fig.Perf_Match}.

\begin{figure}[h]
		\includegraphics[scale=.6]{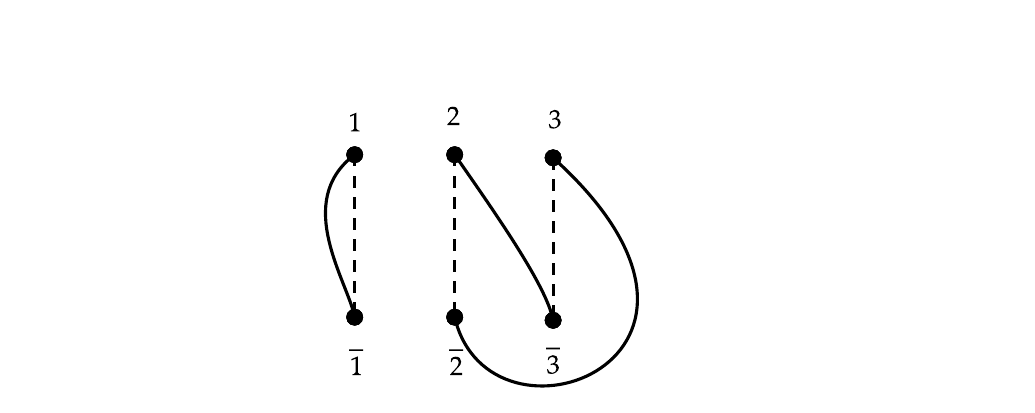}  
		\caption{Two perfect matchings:
			$\iota_1=(1,\bar1)(2,\bar2)(3,\bar3)$ (dashed lines) and
			$\iota_2=(1,\bar1)(2,\bar3)(3,\bar2)$ (solid lines). Here
			$\Lambda(\iota_1,\iota_2)=[2,1]$. As permutations,
			$\iota_1\iota_2 = (1)(\bar{1})(2,3)(\bar{2},\bar{3})$, 
			with cycle decomposition $[2^2,1^2]$.}\label{Fig.Perf_Match}
	\end{figure}
	
	Denote by $\Lambda(\iota_i,\iota_j)=[\lambda_1\ldots\lambda_s]$ the
        corresponding partition of $n$.

        Equivalently, a perfect matching $\iota_k$ can be thought of as an involution in $S_{2n}$ without fixed points that we will denote (slightly abusing notation) by the same symbol. In this case, the product $\iota_i\iota_j$ has a cycle decomposition of the form $[\lambda_1^2\ldots\lambda_s^2]$ with the same entries $\lambda_k$ as above.
        We will freely use both of these approaches.
    
	The commutator in (\ref{Eq:Commutator}) can be decomposed as follows.
	Fix perfect matchings $v\tau$ and 
	$\iota$ such that
	$\Lambda(v\tau,\iota)=\mu$,
	with $\mu\vdash n$, and fix an element $h\in H_n$.
	Introduce the numbers  
	\begin{gather}
		\kappa^\mu_{\mu,\lambda} = 2^n n!\,\#\,\{hv\tau h^{-1} \in \mathcal{P}_n\ | \ 
		\Lambda(hv\tau h^{-1},\iota)=\mu\ \text{and}\ 
		\Lambda(v\tau,hv\tau h^{-1})=\lambda\},
	\end{gather}
	where $\lambda$ and $\nu$ are partitions of $n$, and the factor $2^nn!$ is the order of $H_n$.
    In particular,
	\begin{equation}\label{Eq:HurwandCoeffk}
		\mathcal{O}_\Real^{\circ}(\lambda) = \sum_\mu \kappa^\mu_{\mu,\lambda}\,,
	\end{equation}
	where $\mu$ runs over all partitions of $n$.
    
	The numbers $\kappa^\mu_{\mu,\lambda}$ admit an explicit expression
	in terms of connection coefficient in a double coset algebra that we describe now.
	Recall that $H_{n}$ is a subgroup of $S_{2n}$ of order $|H_n|=2^nn!$.
	We say that a permutation $\sigma\in S_{2n}$ belongs to 
	the double coset $\mathcal K_\lambda\in H_n\backslash S_{2n}/H_n$ if $\sigma\tau\sigma^{-1}\tau$      has cycle structure
	$\lambda=[\lambda_1\lambda_1\ldots\lambda_s\lambda_s]$.
	The double cosets $\mathcal K_\lambda$ are naturally indexed by partitions $\lambda\vdash n$.
	\begin{remark}\label{Rem:Size}
		It is important to recall that we defined $B_\lambda^\sim$ as the set of permutations 
		$\sigma \in B_n^\sim$ having cycle decomposition $\lambda$. Since
		$\mathcal K_\lambda$ consists of \emph{all} permutations $\sigma\in S_{2n}$ such that 
            $\sigma\tau\sigma^{-1}\tau$ has cycle  
		decomposition $\lambda$, this implies $|\mathcal{K}_\lambda|=|B^\sim_\lambda|\cdot |H_n|$.
	\end{remark}
	
	Define $K_\lambda \in \Complex[S_{2n}]$ as the formal sum of
	all elements of $\mathcal{K}_\lambda$. Then $K_\lambda$
	form a basis of a commutative subalgebra of $\Complex[S_{2n}]$,
	known as the double coset algebra, that identifies with the
	Hecke algebra of the Gelfand pair $(S_{2n},H_n)$.
	
	Denote by $b^\mu_{\mu,\lambda}=[K_\mu]K_\mu K_\lambda$ the coefficient at $K_\mu$
	in the linear expansion of $K_\lambda K_\mu$ 
	with respect to the basis $K_\lambda$.
	
	The reason for introducing this machinery is due to the
	following lemma:	
	
	\begin{lemma}[\cite{HSS92}, Lemma 3.2] \label{Lem:HeckeAndMatching}
        We have
		\begin{equation}
			b^\mu_{\mu,\lambda} = \kappa^\mu_{\mu,\lambda}\,.
		\end{equation}	
	\end{lemma}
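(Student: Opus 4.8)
The plan is to rewrite both sides of $b^{\mu}_{\mu,\lambda}=\kappa^{\mu}_{\mu,\lambda}$ as one and the same count of factorisations, and then to match the normalising constants. First I would unwind the left-hand side. Since $\{K_{\nu}\}_{\nu\vdash n}$ is a basis of the commutative double coset algebra and the double cosets $\mathcal{K}_{\nu}$ partition $S_{2n}$, the structure constant can be read off directly inside $\Complex[S_{2n}]$: for any fixed $\sigma_{0}\in\mathcal{K}_{\mu}$ the coefficient of the group element $\sigma_{0}$ in $K_{\mu}K_{\lambda}=\sum_{\sigma_{1}\in\mathcal{K}_{\mu}}\sum_{\sigma_{2}\in\mathcal{K}_{\lambda}}\sigma_{1}\sigma_{2}$ equals $b^{\mu}_{\mu,\lambda}$, so
\begin{equation}
b^{\mu}_{\mu,\lambda}=\#\bigl\{(\sigma_{1},\sigma_{2})\in\mathcal{K}_{\mu}\times\mathcal{K}_{\lambda}\ :\ \sigma_{1}\sigma_{2}=\sigma_{0}\bigr\}.
\end{equation}

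Next I would transport this to perfect matchings through the conjugation map $\pi\colon S_{2n}\to\mathcal{P}_{n}$, $\pi(g)=g\tau g^{-1}$. Transitivity of the conjugation action of $S_{2n}$ on $\mathcal{P}_{n}$ together with the description of $H_{n}$ as $\{h:h\tau h^{-1}=\tau\}$ shows that $\pi$ is surjective with every fibre a coset $gH_{n}$, hence uniformly $2^{n}n!$-to-one; and, unwinding the definition of the double cosets, $g\in\mathcal{K}_{\lambda}$ if and only if $\pi(g)\tau=g\tau g^{-1}\tau$ has cycle type $[\lambda_{1}^{2}\ldots\lambda_{s}^{2}]$, i.e.\ $\Lambda(\pi(g),\tau)=\lambda$. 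Writing $\sigma_{1}=g$ and $\sigma_{2}=g^{-1}\sigma_{0}$ in the factorisation and putting $m=\pi(g)$, $m_{0}=\pi(\sigma_{0})$ (a fixed matching with $\Lambda(m_{0},\tau)=\mu$), the constraint $\sigma_{1}\in\mathcal{K}_{\mu}$ becomes $\Lambda(m,\tau)=\mu$, while $\sigma_{2}\in\mathcal{K}_{\lambda}$ becomes $\Lambda(g^{-1}m_{0}g,\tau)=\lambda$, which after conjugating both entries by $g$ and using the conjugation invariance of $\Lambda$ reads $\Lambda(m_{0},m)=\lambda$. Since these conditions depend on $g$ only through $m=\pi(g)$, the fibre count yields
\begin{equation}
b^{\mu}_{\mu,\lambda}=2^{n}n!\cdot\#\bigl\{m\in\mathcal{P}_{n}\ :\ \Lambda(m,\tau)=\mu\ \text{and}\ \Lambda(m_{0},m)=\lambda\bigr\}.
\end{equation}

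Finally I would match this with $\kappa^{\mu}_{\mu,\lambda}$. The $S_{2n}$-orbits on ordered pairs of perfect matchings are classified exactly by the invariant $\Lambda$ (the Gelfand pair structure of $(S_{2n},H_{n})$, see \cite{Mac95}, Ch.~VII); equivalently, $H_{n}=\text{Stab}_{S_{2n}}(\tau)$ acts transitively on $\{m\in\mathcal{P}_{n}:\Lambda(m,\tau)=\mu\}$, so that this set coincides with the $H_{n}$-orbit $\{h\,m_{0}\,h^{-1}:h\in H_{n}\}$ of $m_{0}$. Choosing the two fixed matchings in the definition of $\kappa^{\mu}_{\mu,\lambda}$ to be $\iota=\tau$ and $v\tau=m_{0}$ (legitimate because $\Lambda(m_{0},\tau)=\mu$), the set counted there is precisely $\{m:\Lambda(m,\tau)=\mu,\ \Lambda(m_{0},m)=\lambda\}$ and the prefactor is $2^{n}n!$, whence $b^{\mu}_{\mu,\lambda}=\kappa^{\mu}_{\mu,\lambda}$. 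That the result is independent of the auxiliary choices (the representative $\sigma_{0}\in\mathcal{K}_{\mu}$ on one side, the pair of matchings of relative type $\mu$ on the other) follows from the same homogeneity.

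The individual steps are short, and the main work is the bookkeeping in the middle step: verifying that $\pi$ is uniformly $2^{n}n!$-to-one, that $\mathcal{K}_{\lambda}$ is exactly $\pi^{-1}\bigl(\{m:\Lambda(m,\tau)=\lambda\}\bigr)$ --- a single double coset of cardinality $|B^{\sim}_{\lambda}|\,|H_{n}|$ by Remark \ref{Rem:Size} --- and keeping the orientation of $\Lambda$ correct when conjugating the constraint on $\sigma_{2}$ by $g$. The only ingredient that is not pure bookkeeping is the transitivity of $H_{n}$ on matchings of a prescribed relative type with respect to $\tau$, i.e.\ the classification of $S_{2n}$-orbits on pairs of perfect matchings; this is standard structure theory for the Gelfand pair $(S_{2n},H_{n})$ and should be invoked rather than reproved.
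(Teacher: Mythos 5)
Your proof is correct; the paper gives no argument of its own for this lemma, simply citing \cite{HSS92}, and your derivation --- reading $b^\mu_{\mu,\lambda}$ off as a factorisation count at a fixed $\sigma_0\in\mathcal K_\mu$, pushing it through the uniformly $2^nn!$-to-one map $g\mapsto g\tau g^{-1}$ onto perfect matchings, and using conjugation-invariance of $\Lambda$ together with the transitivity of $H_n$ on matchings of fixed relative type to $\tau$ --- is exactly the standard argument behind the cited Lemma 3.2. The bookkeeping steps you flag (the fibre size, the identification $\mathcal K_\lambda=\pi^{-1}\{m:\Lambda(m,\tau)=\lambda\}$, and the orientation of $\Lambda$ after conjugating the constraint on $\sigma_2$) all check out against the paper's definitions.
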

	
	Recall a particular case of a ``real'' analogue of the Frobenius formula.
	
	\begin{lemma}[\cite{HSS92}, Lemma 3.3]\label{Lem:RealFrob}
		\begin{equation}
			b^\mu_{\mu,\lambda} = \frac{|K_\mu||K_\lambda|}{(2n)!}\sum_{\rho\vdash n}
			\chi^{2\rho}(1)\omega^\rho(\lambda)\omega^\rho(\mu)\omega^\rho(\mu)\,,
		\end{equation}
		where $2\rho=[2\rho_1\ldots2\rho_m]$ is a partition of $2n$ and $\chi^{2\rho}(1)$ 
		is the dimension of the irreducible
		representation $\chi^{2\rho}$ of $S_{2n}$. In addition, 
		$\omega^\rho(\mu):=\omega^\rho(x)$ is the value of 
		the zonal spherical function $\omega^\rho$ on any $x$ in $\mathcal K_\mu$
		with $\omega^\rho(x) = \frac{1}{2^nn!}\sum_{h\in H_n}\chi^{2\rho}(xh)$.
	\end{lemma}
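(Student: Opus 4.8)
The plan is to diagonalize the structure constants $b^\mu_{\mu,\lambda}$ by exploiting that the double coset algebra $\mathcal{H}=\mathcal{H}(S_{2n},H_n)$ is commutative and semisimple, so that its one-dimensional characters — supplied by the zonal spherical functions $\omega^\rho$, $\rho\vdash n$ — simultaneously diagonalize multiplication in the basis $\{K_\lambda\}$. Write $G=S_{2n}$, so that $|G|=(2n)!$. I will use the classical Gelfand-pair facts for $(S_{2n},H_n)$ (see \cite{Mac95}, Ch.~VII): $\mathcal{H}$ is commutative, its spherical representations are exactly the irreducibles $\chi^{2\rho}$ with $2\rho$ an even partition of $2n$, and the spherical functions $\omega^\rho$ are real-valued and constant on double cosets, with $\omega^\rho(\lambda)$ denoting their common value on $\mathcal{K}_\lambda$.

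First I would show that each $\omega^\rho$ yields an algebra homomorphism $\hat\omega^\rho\colon\mathcal{H}\to\Complex$ with $\hat\omega^\rho(K_\lambda)=|K_\lambda|\,\omega^\rho(\lambda)$. Let $v_0$ span the one-dimensional space of $H_n$-fixed vectors in $\chi^{2\rho}$; then $\omega^\rho(g)=\langle\pi(g)v_0,v_0\rangle/\|v_0\|^2$, which matches the formula $\omega^\rho(x)=\frac{1}{2^nn!}\sum_{h\in H_n}\chi^{2\rho}(xh)$ of the statement, because $\frac{1}{|H_n|}\sum_{h}\pi(h)$ is the orthogonal projection onto $\Complex v_0$. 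Since $h K_\lambda h^{-1}=K_\lambda$ for $h\in H_n$, the vector $K_\lambda v_0$ is again $H_n$-fixed, hence a multiple of $v_0$; pairing it against $v_0$ identifies the eigenvalue as $\sum_{g\in\mathcal{K}_\lambda}\omega^\rho(g)=|K_\lambda|\,\omega^\rho(\lambda)$. By commutativity $\hat\omega^\rho$ is then multiplicative, and applying it to $K_\mu K_\lambda=\sum_{\nu'} b^{\nu'}_{\mu,\lambda}K_{\nu'}$ produces the diagonal relations
\[
|K_\mu|\,|K_\lambda|\,\omega^\rho(\mu)\,\omega^\rho(\lambda)=\sum_{\nu'} b^{\nu'}_{\mu,\lambda}\,|K_{\nu'}|\,\omega^\rho(\nu'),\qquad \rho\vdash n.
\]

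Next I would invert this system using orthogonality of spherical functions. Schur orthogonality of matrix coefficients, regrouped over double cosets and using reality of $\omega^\rho$, gives $\frac{\chi^{2\rho}(1)}{(2n)!}\sum_{\lambda}|K_\lambda|\,\omega^\rho(\lambda)\,\omega^{\rho'}(\lambda)=\delta_{\rho\rho'}$; equivalently the matrix $A_{\rho,\lambda}=\big(|K_\lambda|\chi^{2\rho}(1)/(2n)!\big)^{1/2}\omega^\rho(\lambda)$ satisfies $AA^\top=\mathrm{id}$. Here the Gelfand-pair input is decisive: the rows ($\rho\vdash n$) and columns ($\lambda\vdash n$) share the same index set, so $A$ is square and $A^\top A=\mathrm{id}$ yields the dual relation
\[
\sum_{\rho\vdash n}\chi^{2\rho}(1)\,\omega^\rho(\lambda)\,\omega^\rho(\nu)=\frac{(2n)!}{|K_\lambda|}\,\delta_{\lambda\nu}.
\]
Multiplying the diagonal relation by $\chi^{2\rho}(1)\,\omega^\rho(\nu)$ and summing over $\rho$, the right-hand side collapses by this dual relation to $b^\nu_{\mu,\lambda}\,(2n)!$, giving $b^\nu_{\mu,\lambda}=\frac{|K_\mu||K_\lambda|}{(2n)!}\sum_{\rho}\chi^{2\rho}(1)\,\omega^\rho(\mu)\,\omega^\rho(\lambda)\,\omega^\rho(\nu)$; the specialization $\nu=\mu$ is exactly the asserted formula.

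The computation is essentially linear algebra once the normalizations are pinned down, so the work is bookkeeping rather than estimation. The one genuinely structural ingredient I must not take for granted is the complete Gelfand-pair package for $(S_{2n},H_n)$ — commutativity of $\mathcal{H}$, the precise labeling of spherical functions by even partitions $2\rho$, and the reality of the $\omega^\rho$ — since it is exactly the resulting squareness of the spherical ``character table'' that legitimizes the inversion in the final step. I would quote \cite{Mac95}, Ch.~VII for this package; the remainder is the manipulation above.
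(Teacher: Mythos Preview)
Your argument is correct. The paper does not actually prove this lemma: it is quoted verbatim from \cite{HSS92}, Lemma 3.3, and used as a black box in the proof of Theorem~\ref{Prop:SumZonal}. What you have supplied is the standard Gelfand-pair derivation --- diagonalize the Hecke algebra by the spherical functions, then invert via the dual orthogonality relation obtained from squareness of the spherical character table --- and all normalizations check against the conventions the paper records (in particular the scalar product $\langle\omega^\rho,\omega^\rho\rangle=(2n)!/\chi^{2\rho}(1)$ used right after \eqref{zonal}). One cosmetic remark: the step ``$hK_\lambda h^{-1}=K_\lambda$ so $K_\lambda v_0$ is $H_n$-fixed'' is fine, but the more direct reason is simply $hK_\lambda=K_\lambda$ in $\Complex[S_{2n}]$, since $\mathcal{K}_\lambda$ is already a union of left $H_n$-cosets; either justification suffices.
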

	
	We now properly define the zonal polynomials
	
	\begin{definition}\label{Def:ZonPoly}
		For $\lambda=[1^{m_1}2^{m_2}\ldots]$ a partition of $n$ put
		$z_\lambda := \prod_i i^{m_i}m_i!$.
		The zonal polynomials $Z_\rho$ are defined by the formula
		\begin{equation}
			Z_\rho= |H_n|\sum_{\lambda \vdash n}
			\frac{1}{z_{2\lambda}}\omega^\rho(\lambda)p_\lambda\,,
		\end{equation}
        where $p_\lambda=\prod_i p_i^{m_i}$ and $p_i$ is the $i^{\text th}$ power sum.
	\end{definition}
	
	We now have all the ingredients to prove the following
	
	\begin{theorem}\label{Prop:SumZonal}
		\begin{equation}\label{Eq:ZonalHurw}
			\sum_{\rho\vdash n} Z_\rho = \frac{1}{2^nn!}\sum_{\lambda\vdash n}
			\mathcal{O}_\Real^{\circ}(\lambda) p_\lambda.
		\end{equation}
	\end{theorem}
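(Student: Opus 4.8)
The plan is to expand the left-hand side $\sum_{\rho\vdash n} Z_\rho$ using the definition of zonal polynomials (Definition \ref{Def:ZonPoly}) and then reorganize the double sum so that the coefficient of each power sum $p_\lambda$ matches $\frac{1}{2^n n!}\mathcal{O}_\Real^\circ(\lambda)$. Writing $Z_\rho = |H_n|\sum_{\lambda\vdash n} z_{2\lambda}^{-1}\omega^\rho(\lambda) p_\lambda$ and summing over $\rho$, we get
\[
\sum_{\rho\vdash n} Z_\rho = |H_n|\sum_{\lambda\vdash n} \frac{p_\lambda}{z_{2\lambda}}\sum_{\rho\vdash n}\omega^\rho(\lambda).
\]
So the whole theorem reduces to proving the numerical identity
\[
\frac{|H_n|^2}{z_{2\lambda}}\sum_{\rho\vdash n}\omega^\rho(\lambda) = \mathcal{O}_\Real^\circ(\lambda)
\]
for every partition $\lambda\vdash n$, where I have multiplied through by $|H_n| = 2^n n!$.

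First I would invoke Equation \ref{Eq:HurwandCoeffk}, which says $\mathcal{O}_\Real^\circ(\lambda) = \sum_\mu \kappa^\mu_{\mu,\lambda}$, and then Lemma \ref{Lem:HeckeAndMatching} to replace $\kappa^\mu_{\mu,\lambda}$ by the connection coefficient $b^\mu_{\mu,\lambda}$. Next I would substitute the ``real Frobenius formula'' of Lemma \ref{Lem:RealFrob}, getting
\[
\mathcal{O}_\Real^\circ(\lambda) = \sum_{\mu\vdash n}\frac{|K_\mu||K_\lambda|}{(2n)!}\sum_{\rho\vdash n}\chi^{2\rho}(1)\,\omega^\rho(\lambda)\,\omega^\rho(\mu)^2.
\]
Swapping the order of summation, the inner sum over $\mu$ becomes $\sum_{\mu\vdash n}|K_\mu|\,\omega^\rho(\mu)^2$, which I would evaluate using the orthogonality relations for zonal spherical functions on the Gelfand pair $(S_{2n},H_n)$: one has $\sum_{\mu}|K_\mu|\,\omega^\rho(\mu)\,\omega^{\rho'}(\mu) = \delta_{\rho\rho'}\,|S_{2n}|/\chi^{2\rho}(1) \cdot (\text{normalization})$; more precisely the standard identity gives $\sum_{\mu\vdash n}|K_\mu|\,\omega^\rho(\mu)^2 = \dfrac{(2n)!}{\chi^{2\rho}(1)}$ (this is the ``norm'' of the zonal spherical function summed over the double coset algebra). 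Plugging this in collapses the $\chi^{2\rho}(1)$ and $(2n)!$ factors, leaving
\[
\mathcal{O}_\Real^\circ(\lambda) = |K_\lambda|\sum_{\rho\vdash n}\omega^\rho(\lambda).
\]

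To finish, I would compare $|K_\lambda|$ with $|H_n|^2/z_{2\lambda}$. By Remark \ref{Rem:Size}, $|K_\lambda| = |B^\sim_\lambda|\cdot|H_n|$, and $|B^\sim_\lambda|$ is the number of perfect matchings $\iota$ with $\Lambda(\iota_0,\iota) = \lambda$ for a fixed base matching; a standard count (the analogue of the formula $|C_\lambda| = n!/z_\lambda$ for $S_n$) gives $|B^\sim_\lambda| = |H_n|/z_{2\lambda}$, i.e. $|K_\lambda| = |H_n|^2/z_{2\lambda}$, which is exactly the prefactor needed. Putting everything together yields $\sum_{\rho\vdash n}Z_\rho = \frac{1}{2^n n!}\sum_\lambda |K_\lambda|\left(\sum_\rho\omega^\rho(\lambda)\right)\frac{p_\lambda}{|K_\lambda|}\cdot\frac{|H_n|^2}{z_{2\lambda}}\cdot\frac{z_{2\lambda}}{|H_n|^2}$—after bookkeeping, precisely $\frac{1}{2^n n!}\sum_\lambda \mathcal{O}_\Real^\circ(\lambda)\,p_\lambda$.

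The main obstacle I anticipate is getting the normalization constants exactly right: the definition of $\omega^\rho(\mu)$ in Lemma \ref{Lem:RealFrob} as $\frac{1}{2^n n!}\sum_{h\in H_n}\chi^{2\rho}(xh)$, the precise form of the orthogonality relation $\sum_\mu |K_\mu|\,\omega^\rho(\mu)^2 = (2n)!/\chi^{2\rho}(1)$, and the cardinality identity $|K_\lambda| = |H_n|^2/z_{2\lambda}$ must all be pinned down with their correct powers of $2$ and factorials, since these constants conspire to cancel. I would double-check the orthogonality normalization against \cite{Mac95}, Chapter VII, and verify the $|K_\lambda|$ count against \cite{HSS92}; a sanity check on a small case such as $n=2$ (where $\lambda\in\{[2],[1,1]\}$) would confirm the bookkeeping before committing to the general argument.
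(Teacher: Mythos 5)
Your proposal is correct and follows essentially the same route as the paper's proof: both rest on the identity $\mathcal{O}_\Real^{\circ}(\lambda)=\sum_\mu\kappa^\mu_{\mu,\lambda}$, Lemmas \ref{Lem:HeckeAndMatching} and \ref{Lem:RealFrob}, the orthogonality relation $\sum_\mu|K_\mu|\,\omega^\rho(\mu)^2=(2n)!/\chi^{2\rho}(1)$, and the cardinality formula $|K_\lambda|=|H_n|^2/z_{2\lambda}$. The only cosmetic difference is that you expand the zonal-polynomial side first while the paper starts from the origami side; the chain of identities is identical.
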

	
	\begin{proof}
		Expand the right-hand side of \eqref{Eq:ZonalHurw}:
		\begin{equation}
			\begin{split}
				&\frac{1}{2^nn!}\sum_{\lambda\vdash n}
				\mathcal{O}_\Real^{\circ}(\lambda) p_\lambda\\
				&=\frac{1}{2^nn!}\sum_{\lambda\vdash n}\sum_{\mu\vdash n} \kappa^\mu_{\mu,\lambda}
				p_\lambda\qquad \text{(by Eq. \eqref{Eq:HurwandCoeffk})}\\
				&=\frac{1}{2^nn!}\sum_{\lambda\vdash n}\sum_{\mu\vdash n}\frac{|K_\mu||K_\lambda|}{(2n)!}\sum_{\rho\vdash n}
				\chi^{2\rho}(1)\omega^\rho(\lambda)\omega^\rho(\mu)\omega^\rho(\mu)p_\lambda\qquad \text{(by Lemma \ref{Lem:RealFrob})}\label{zonal}
			\end{split}
		\end{equation}
		
		Recall that the  Hecke algebra of $(S_{2n},H_n)$ has the scalar product
		\begin{equation}
			\langle f,g\rangle=\sum_{\sigma\in S_{2n}}f(\sigma)g(\sigma)
		\end{equation}
		and for zonal spherical functions we have		
		\begin{equation}
			\langle \omega^\lambda,\omega^\rho \rangle = 
			\delta_{\lambda,\rho}\frac{(2n)!}{\chi^{2\lambda}(1)}.
		\end{equation}
		We continue to get
		\begin{equation}
			\begin{split}
				&=\frac{1}{2^nn!}\sum_{\lambda\vdash n}\sum_{\mu\vdash n}\frac{|K_\mu||K_\lambda|}{(2n)!}\sum_{\rho\vdash n}
				\chi^{2\rho}(1)\omega^\rho(\lambda)\omega^\rho(\mu)\omega^\rho(\mu)p_\lambda\\
				&=\frac{1}{2^nn!}\sum_{\lambda\vdash n}\frac{|K_\lambda|}{(2n)!}\sum_{\rho\vdash n}
				\chi^{2\rho}(1)\omega^\rho(\lambda)
				\underbrace{\sum_{\mu\vdash n}|K_\mu|\omega^\rho(\mu)\omega^\rho(\mu)}_{
					=\langle \omega^\rho,\,\omega^\rho \rangle}p_\lambda\\
				&=\frac{1}{2^nn!}\sum_{\rho\vdash n}\sum_{\lambda\vdash n}
				|K_\lambda|\omega^\rho(\lambda)p_\lambda\,.
			\end{split}
		\end{equation}
            From Definition \ref{Def:ZonPoly} of zonal polynomials with the help of the formula
            \begin{equation}
			|K_\lambda|=\frac{|H_n|^2}{z_{2\lambda}}
		\end{equation}
		(see \cite{Mac95}, Chapter VII, Section 2)
		we eventually get		
		\begin{equation}
			\begin{split}
				&=\frac{1}{2^nn!}\sum_{\rho\vdash n}\sum_{\lambda\vdash n}
				|K_\lambda|\omega^\rho(\lambda)p_\lambda\\
				&=\frac{1}{|H_n|}\sum_{\rho\vdash n}\sum_{\lambda\vdash n}
				\frac{|H_n|^2}{z_{2\lambda}}\omega^\rho(\lambda)p_\lambda\\
				&=\sum_{\rho\vdash n} Z_\rho
			\end{split}
		\end{equation}
		completing the proof.
	\end{proof}
	
	To finish this section consider the polynomials 
	\begin{equation}
	   P_n^\circ(\mathbf{p}):=\frac{1}{2^nn!}\sum_{\lambda\vdash n}
	   \mathcal{O}_\Real^{\circ}(\lambda) p_\lambda \,,
	\end{equation}
        where $\mathbf{p}=(p_1,p_2,\ldots)$, and arrange them into the generating function
	\begin{equation}\label{Eq:GenFunctR}
		\mathcal{P}^\circ(\mathbf{p},t) = \sum_nP_n^\circ(\mathbf{p})\, t^n.
	\end{equation}
	This generating function enumerates \emph{all} real origami that are not necessarily
        connected. To get the generating function counting only \emph{connected} real origami 
        (as claimed in Definition \ref{Def:OrigamiGeo}) it is sufficient to take the logarithm:	
	\begin{equation}\label{Eq:GenFunctConnR}
		\mathcal{P}(\mathbf{p},t):=\log\mathcal{P}^\circ(\mathbf{p},t)\,.
	\end{equation}

    Denote by $P_n=P_n(\mathbf{p})$ the coefficient of $\mathcal{P}(\mathbf{p},t)$ at $t^n$.
	Then the coefficient of $P_n$ by
	  the monomial $p_\lambda,\;\lambda\vdash n$, gives the number
	of \emph{connected} real origami of degree $2n$ with ramification profile 
	$\lambda = [\lambda_1\lambda_1\lambda_2\lambda_2\ldots]$ (counted with weights reciprocal to
       the orders of their automorphism groups).
    
    An advantage of Theorem \ref{Prop:SumZonal} is that it provides
	a reasonably fast algorithm for counting origami. 
	Appendix A contains the list of polynomials $P_n$ up to $n=13$ (i.e. up to degree $26$).
	
	\subsection{Yet another ``real''  structure}
	
	In this subsection we introduce an alternative to real origami that is associated with the mirror symmetric map
$\text{symm}:T\to T,\;(x,y)\mapsto (y,x).$
	
\begin{definition}[geometric]\label{Def:MobGeo}
		An origami $f:X\to T$ is called \emph{mirror symmetric}, or simply \emph{mirror}, if the following conditions
		are satisfied:
		\begin{itemize}
			\item $f:X\to T$ is a ramified cover of $T$ branching over $0\in T$; 
			\item there exist an anti-holomorphic fixed point free involution $\phi:X\to X$ 
			such that $f\circ\phi={\rm symm}\circ f$ and the factor space $X/\text{symm}$ is connected;
			\item the preimage of the parallel $\alpha=\{y=1/2\}\subset T$ is exchanged by $\phi$ with the preimage of the meridian $\beta=\{x=1/2\}\subset T.$
		\end{itemize}
	\end{definition}
	
	\begin{definition}[combinatorial]\label{Def:MobCombi}
		A pair of permutations $h,v\in S_{2n}$ defines a mirror origami
		if the following conditions are satisfied
		\begin{itemize}
                \item there exist an involution without fixed points $\tau\in S_{2n}$ such that 
			$h=\tau v \tau.$
			\item the group generated by $h,v$ and $\tau$ acts transitively on the set
			$\{1,\bar{1},\ldots,n,\bar{n}\}$; 
			
		\end{itemize}
	\end{definition}
	
	Following the same lines as in the proof of Proposition \ref{Prop:DefEqui}, 
	 we obtain
	\begin{proposition}
		Definitions \ref{Def:MobGeo} and \ref{Def:MobCombi} are equivalent.
	\end{proposition}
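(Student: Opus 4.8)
The plan is to mirror the proof of Proposition \ref{Prop:DefEqui} almost verbatim, adjusting only the two structural relations that distinguish mirror origami from real origami: the anti-holomorphic involution now covers $\mathrm{symm}$ rather than $\mathrm{conj}$, and instead of the meridian being split into $\tau$-symmetric cycles we have the preimages of the parallel $\alpha$ and the meridian $\beta$ being swapped by $\phi$.

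First I would show that Definition \ref{Def:MobGeo} implies Definition \ref{Def:MobCombi}. Pick the base point $b=(1/2,1/2)\in T$ (which is fixed by $\mathrm{symm}$), take the standard generators $\alpha,\beta$ of $\pi_1(T\setminus\{0\},b)$, and set $h=\rho(\alpha)$, $v=\rho(\beta)$ via the monodromy representation $\rho:\pi_1(T\setminus\{0\},b)\to S_{2n}$. Since $\phi$ is fixed-point-free and permutes the $2n$ points of $f^{-1}(b)$ pairwise, it induces a fixed-point-free involution $\tau\in S_{2n}$. The relation $f\circ\phi=\mathrm{symm}\circ f$ together with $\mathrm{symm}_*\alpha=\beta$ (the reflection swaps the two loops) forces, upon lifting $\alpha$ and $\beta$ and tracking endpoints through $\phi$, the identity $\tau h\tau=v$, i.e. $h=\tau v\tau$. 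Transitivity of $\langle h,v,\tau\rangle$ follows from connectedness of $X/\mathrm{symm}$ exactly as in the real case.

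Next, for the converse, given $(h,v,\tau)$ satisfying Definition \ref{Def:MobCombi} I would build the cover $f:X\to T$ from $h,v$ and then define $\phi:X\to X$ by the same path-lifting recipe as in Proposition \ref{Prop:DefEqui}: for a regular point $x$, choose a path $s$ from $f(x)$ to $b$, lift it starting at $x$ to reach some $a_i\in f^{-1}(b)$, then lift $\mathrm{symm}\circ s^{-1}$ starting at $\tau a_i$ and call the endpoint $\phi(x)$. Well-definedness is checked on $s=\alpha$ and $s=\beta$; here the key point is that $\mathrm{symm}$ interchanges $\alpha$ and $\beta$, so lifting $\mathrm{symm}\circ\alpha=\beta$ from $\tau h(a_i)$ lands at $v^{-1}\tau h(a_i)=\tau a_i$ precisely because $\tau h\tau=v$, i.e. $v^{-1}\tau h=\tau$, and symmetrically for $\beta$. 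One then checks $\phi$ is anti-holomorphic, $\phi\circ\phi=\mathrm{id}$ since $\tau^2=\mathrm{id}$, and $\phi$ is fixed-point-free since $\tau$ is; the preimage-swapping condition on $\alpha$ and $\beta$ is automatic from $f\circ\phi=\mathrm{symm}\circ f$ and $\mathrm{symm}(\alpha)=\beta$. As in Proposition \ref{Prop:DefEqui} one must separately treat the subcase where $\langle h,v\rangle$ fails to act transitively, in which $X$ splits into two components swapped by $\phi$.

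I do not expect a genuine obstacle: the argument is structurally identical to Proposition \ref{Prop:DefEqui}, and the paper itself signals this by saying the proof proceeds ``following the same lines.'' The one spot demanding a little care is the bookkeeping of which loop maps to which under $\mathrm{symm}$ when verifying that $\phi$ extends continuously across the fibre over $b$ — one must be consistent about orientations and about the chosen ordering of $\alpha,\beta$ so that the combinatorial relation $h=\tau v\tau$ comes out with the correct placement of inverses. Thus the write-up can reasonably be compressed to ``the proof is identical to that of Proposition \ref{Prop:DefEqui} upon replacing $\mathrm{conj}$ by $\mathrm{symm}$ and using $\mathrm{symm}_*\alpha=\beta$, which turns the condition $\tau h\tau h^{-1}=\tau v\tau v=\mathrm{id}$ into $h=\tau v\tau$,'' with the endpoint computations spelled out only for $s=\alpha$ and $s=\beta$.
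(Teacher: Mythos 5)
Your proposal is correct and follows exactly the route the paper intends: the paper gives no details here, stating only that the proof proceeds ``following the same lines'' as Proposition \ref{Prop:DefEqui}, and your write-up is precisely that adaptation, with the key bookkeeping ($\mathrm{symm}_*\alpha=\beta$ forcing $v^{-1}\tau h=\tau$, i.e.\ $h=\tau v\tau$) carried out correctly. In fact your version supplies more detail than the paper itself.
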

	
		Denote by $\mathcal{O}_{\text{mirr}}^{\circ}(\lambda)$ 
	the number of mirror origami in the sense of (slightly modified) Definition \ref{Def:MobCombi},
	where
	\begin{itemize}
		\item  $\tau:=(1\,\bar 1)\dots (n\,\bar n)$ is \emph{fixed},
		\item transitivity condition \emph{not required},
		\item $\lambda=[\lambda_1\ldots\lambda_s]$ is a partition of $n$ such that the commutator
		$[h,v]$ has cycle structure $[\lambda_1^2\ldots\lambda_s^2]$, see Remark \ref{Rem:Profil}.
	\end{itemize}
	
	We now introduce several technicalities in order to
	establish a bijection between the sets $\mathcal{O}_{\text{mirr}}^{\circ}(\lambda)$ 
	and $\mathcal{O}_{\Real}^{\circ}(\lambda)$.
	We start with a reformulation of Proposition 3 from \cite{ZJ09} (cf. also Lemma \ref{Lem:Monot}).
	
	\begin{lemma}\label{Lemma:MonoInvo}
		If $\iota$ is an involution without fixed points such
		that $\iota\tau$ has cycle decomposition $[\lambda_1^2,\dots,\lambda^2_s]$,
		then there exists a \emph{unique} sequence of $n-s$ transpositions
		$\sigma_1,\dots,\sigma_{n-s}$ such that
		\begin{itemize}
			\item for all $i\in \{1,\dots,n-s\}$, $\sigma_i$ is either of
			the form $(a_i,b_i)$ or $(\bar a_i,b_i)$ with $b_i\in\{1,\dots,n\}$ and
			$a_i<b_i$ (\text{remark that} $\bar a_i<b_i$),
			\item $b_i<b_{i+1}$ for $i=1,\dots,n-s$, 
			\item $\iota = \sigma_1\dots\sigma_{n-s}\tau\sigma_{n-s}\dots\sigma_{1}$.
		\end{itemize}
	\end{lemma}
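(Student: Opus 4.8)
The plan is to prove existence and uniqueness of the transposition sequence by induction on $n-s$, i.e., on the number of non-fixed cycles contributed by $\iota\tau$ after we strip out the cycle containing the largest label. First I would set up the base case $n=s$: here $\iota\tau$ has cycle type $[1^{2n}]$, so $\iota\tau=\mathrm{id}$, forcing $\iota=\tau$, and the empty sequence of transpositions is the unique choice. For the inductive step, let $b=n$ (the largest element of $\{1,\dots,n\}$) and examine the cycle $c$ of $\iota$ containing $b$. Since $\iota$ is a fixed-point-free involution, $c=(b,x)$ for some $x\neq b$; there are two possibilities matching the two allowed forms of $\sigma_{n-s}$: either $x=\bar b$, in which case $(b,\bar b)$ is already a cycle of $\tau$ and I claim $b,\bar b$ then form two fixed points of $\iota\tau$ that must be removed by descending induction on $b$ rather than contributing a transposition; or $x\in\{1,\dots,n-1\}\cup\{\bar 1,\dots,\overline{n-1}\}$, and then I set $\sigma_{n-s}=(a,b)$ or $(\bar a,b)$ accordingly (with $a<b$), conjugate, and reduce to a smaller instance.

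The technical heart is the conjugation-and-reduction step. Given a candidate last transposition $\sigma=\sigma_{n-s}$ of the prescribed form with $b_{n-s}=b$ maximal, set $\iota'=\sigma\iota\sigma$. I would verify that (i) $\iota'$ is again a fixed-point-free involution; (ii) $\iota'$ \emph{fixes} $b$ and $\bar b$ — equivalently, the cycle $(b,x)$ of $\iota$ is merged with the $\tau$-cycle through $x$ so that after conjugation $b$ is matched to $\bar b$; and (iii) the cycle type of $\iota'\tau$ on the remaining $2(n-1)$ points is $[\mu_1^2,\dots,\mu_t^2]$ with $\sum\mu_i=n-1$ and $t=s-1$, i.e. $n-1-t=n-s=(n-s)-1+1$... more precisely the number of transpositions needed drops by exactly one. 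This uses the standard fact that multiplying a permutation by a transposition either splits one cycle into two or merges two cycles into one, applied here to the product $\iota\tau$ where the transposition $\sigma$ acts on both factors. I would then invoke the inductive hypothesis on $\iota'$ restricted to $\{1,\bar 1,\dots,n-1,\overline{n-1}\}$ to obtain the unique sequence $\sigma_1,\dots,\sigma_{n-s-1}$, and prepend it to $\sigma_{n-s}$.

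For \textbf{uniqueness} I would argue that the last transposition $\sigma_{n-s}$ is forced: it must be the one whose larger element $b_{n-s}$ equals the largest label moved away from its $\tau$-partner by $\iota$, and among transpositions of the two allowed forms with that fixed larger element, the smaller element is determined by $\iota$ itself (it is $\iota(b)$, up to barring, because conjugation by $\sigma_{n-s}$ must send $b$ to $\bar b$). Once $\sigma_{n-s}$ is pinned down, uniqueness of the rest follows from the inductive hypothesis applied to $\iota'$. The monotonicity constraint $b_i<b_{i+1}$ is exactly what makes the peeling order canonical, so it is automatically respected by this top-down construction.

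The step I expect to be the main obstacle is the careful bookkeeping in (ii)–(iii): checking that conjugating $\iota$ by $\sigma_{n-s}$ genuinely isolates both $b$ and $\bar b$ as $\tau$-fixed points of $\iota'\tau$ simultaneously — one has to track how the transposition interacts with the $\tau$-structure on the barred copies, and confirm that no spurious self-symmetric cycle is created so that $\iota'$ stays a legitimate fixed-point-free involution of the smaller set. This is where the precise form $(a_i,b_i)$ versus $(\bar a_i,b_i)$ of the allowed transpositions matters, and it is essentially the content of Proposition 3 of \cite{ZJ09} transcribed into the present notation; I would cite that result for the delicate combinatorial verification and supply only the dictionary between the two setups.
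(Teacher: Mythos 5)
First, a remark on the comparison you asked for: the paper does not prove this lemma at all — it is stated as a reformulation of Proposition 3 of \cite{ZJ09} and used as a black box — so the only thing to assess is your argument on its own terms. Your global strategy (peel off the largest label by induction, with the count $\prod_{b=1}^{n}(2b-1)=(2n-1)!!$ matching the number of fixed-point-free involutions) is the right general idea, but the reduction step as you describe it is wrong, and the error is not a bookkeeping subtlety that the citation can absorb. In the expression $\iota=\sigma_1\cdots\sigma_{n-s}\,\tau\,\sigma_{n-s}\cdots\sigma_1$ the transposition with the largest $b$-value, $\sigma_{n-s}$, sits \emph{innermost}, adjacent to $\tau$; hence $\sigma_{n-s}\iota\sigma_{n-s}$ is \emph{not} $\sigma_1\cdots\sigma_{n-s-1}\tau\sigma_{n-s-1}\cdots\sigma_1$, since $\sigma_{n-s}$ need not commute with the earlier factors. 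Concretely, for $n=3$ take $\sigma_1=(1,2)$, $\sigma_2=(1,3)$, which produce $\iota=(3,\bar{1})(1,\bar{2})(2,\bar{3})$. Your two characterizations of the last transposition already disagree here: ``$\iota(b)$ up to barring'' points at $a_2=1$ (since $\iota(3)=\bar{1}$), while ``conjugation by $\sigma_{n-s}$ must match $b$ with $\bar b$'' forces $\sigma_2=(\iota(\bar{3}),3)=(2,3)$. Neither works: $(1,2)(2,3)\tau(2,3)(1,2)=(2,\bar{1})(3,\bar{2})(1,\bar{3})\neq\iota$, so the sequence selected by the conjugation criterion does not satisfy the required identity; and conjugating $\iota$ by the \emph{correct} $\sigma_2=(1,3)$ gives $(1,\bar{1})(3,\bar{2})(2,\bar{3})$, which pairs $3$ with $\bar{2}$ rather than $\bar{3}$, so your claims (ii)--(iii) fail and the induction cannot restrict to $\{1,\bar{1},2,\bar{2}\}$.

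The underlying obstruction is that $\iota(\bar{n})=\sigma_1\cdots\sigma_{n-s-1}(c)$, where $c\in\{a_{n-s},\bar{a}_{n-s}\}$ is the other entry of $\sigma_{n-s}$: the earlier transpositions scramble $c$, so $\sigma_{n-s}$ cannot be read off from $\iota$ alone. A correct induction runs in the opposite order. If $\iota(\bar{n})=n$, delete the pair and recurse. If not, delete $n,\bar{n}$ from the matching and re-pair $\iota(n)$ with $\iota(\bar{n})$ to obtain an involution $\iota^{\flat}$ of $\{1,\bar{1},\dots,n-1,\overline{n-1}\}$ (one checks $\iota(n)$ and $\iota(\bar{n})$ are exactly $\rho(a_{n-s})$ and $\rho(\bar{a}_{n-s})$, i.e.\ a pair of $\rho\tau^{\flat}\rho^{-1}$); apply the inductive hypothesis to get the unique $\rho=\sigma_1\cdots\sigma_{n-s-1}$ with $\rho\tau^{\flat}\rho^{-1}=\iota^{\flat}$; and only then set $\sigma_{n-s}=(\rho^{-1}(\iota(\bar{n})),\,n)$ — the earlier transpositions must be determined \emph{before} the last one, not after. (Your peeling would be valid verbatim for the reversed product $\sigma_{n-s}\cdots\sigma_1\tau\sigma_1\cdots\sigma_{n-s}$, with the largest $b$ outermost; that is a true but genuinely different statement, yielding different coset representatives, and it is not the one asserted here.)
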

	
	Notice that if $\zeta\in H_n$ then 
	$$\sigma_1\dots\sigma_{n-s}\zeta\tau \zeta^{-1}\sigma_{n-s}\dots\sigma_{1}=
	\sigma_1\dots\sigma_{n-s}\tau \sigma_{n-s}\dots\sigma_{1}$$ since
	$\zeta$ and $\tau$ commute by definition. 
	
	By Proposition \ref{Prop:PropertiesBn} the product $\sigma_1\dots\sigma_{n-s}$ is a representative
	of the right coset $S_{2n}/H_n$. Furthermore, the products
	$\sigma_1\dots\sigma_{n-s}\zeta$ for $\zeta\in H_n$ and
	$\zeta\neq id$ are the remaining elements in this coset.
	
	Similar to Lemma \ref{Lemma:MonoInvo}, any permutation $\alpha\in S_{2n}$ can be \emph{uniquely} 
	decomposed into a product of transpositions with an element $\zeta\in H_n$ so
	that $\alpha=\sigma_1\dots\sigma_{n-s}\zeta$. We put $\phi_v(\alpha)=\sigma_1\dots\sigma_{n-s}$
	and $\phi_h(\alpha)=\zeta$.
	
	Recall that for each element $\sigma\in B_n^\sim$,
	we can associate a unique involution without fixed points $\iota=\sigma\tau$.
	
	To fix the notation, let $h,v$ be such that $[v,h]\in\mathcal{O}_{\Real}^{\circ}(\lambda)$,
	and let $\xi,\eta$ be such that $[\eta,\xi] \in \mathcal{O}_{\text{mirr}}^{\circ}(\lambda)$.
Remark that
$$[\eta,\xi]=\eta\xi\eta^{-1}\xi^{-1}=\eta\tau \eta\tau \eta^{-1}\tau \eta^{-1}\tau
	=(\eta\tau \eta)\tau (\eta^{-1}\tau \eta^{-1})\tau,$$
	once again giving a commutator of two involutions without fixed points.
	Furthermore, Equation \ref{Eq:Commutator} and the discussion
	after it imply $[v,h]=[v\tau,h]$.

	Define the map $\Xi$ by the formula
	\begin{align}\label{Eq:BijMobReal}
			\Xi:S_{2n}\times S_{2n}&\rightarrow B_n^\sim\times H_n,\\
			(\eta,\xi)&\mapsto (\phi_v(\alpha)\tau(\phi_v(\alpha))^{-1}\tau,\phi_h(\alpha)).
	\end{align}
	The above results yield	
	\begin{proposition}
		The map $\Xi$ is a bijection between $\mathcal{O}_{\text{mirr}}^{\circ}(\lambda)$ and
		 $\mathcal{O}_{\Real}^{\circ}(\lambda)$.
	\end{proposition}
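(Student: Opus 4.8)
\medskip
\noindent The plan is to establish two things: that $\Xi$ is a bijection of the ambient sets, and that it is compatible with the splitting of both sides according to the ramification profile $\lambda\vdash n$. Together these prove the proposition.

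First I would observe that $\Xi$ is the composition of the (manifestly invertible) passage from the mirror datum $(\eta,\xi)$ --- which amounts to the single permutation $\eta$, since a mirror origami has $\xi=\tau\eta\tau$ forced --- to the auxiliary permutation $\alpha=\alpha(\eta,\xi)$, followed by $\alpha\mapsto\bigl(\phi_v(\alpha)\,\tau\,\phi_v(\alpha)^{-1}\,\tau,\ \phi_h(\alpha)\bigr)$. By Lemma~\ref{Lemma:MonoInvo} and the remarks following it, every $\alpha\in S_{2n}$ factors uniquely as $\alpha=\phi_v(\alpha)\phi_h(\alpha)$ with $\phi_v(\alpha)$ a monotone product of transpositions --- a canonical representative of the coset $\alpha H_n$ --- and $\phi_h(\alpha)\in H_n$; pushing this through the bijection of Proposition~\ref{Prop:PropertiesBn} between such representatives and $B_n^{\sim}$ (the representative $\sigma$ corresponds to $\sigma\tau\sigma^{-1}\tau\in B_n^{\sim}$, equivalently to the perfect matching $\sigma\tau\sigma^{-1}$ under $v\mapsto v\tau$) makes $\alpha\mapsto(\phi_v(\alpha)\tau\phi_v(\alpha)^{-1}\tau,\phi_h(\alpha))$ a bijection $S_{2n}\to B_n^{\sim}\times H_n$. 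Hence $\Xi$ is a bijection of the ambient sets; as a consistency check, both sides have $(2n)!=(2n-1)!!\cdot 2^n n!$ elements.

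Second I would track the ramification profile through the two descriptions of the commutator. On the mirror side the identity recorded above,
$$[\eta,\xi]=(\eta\tau\eta)\,\tau\,(\eta\tau\eta)^{-1}\,\tau,$$
presents $[\eta,\xi]$ as the product of the fixed-point-free involution $(\eta\tau\eta)\tau(\eta\tau\eta)^{-1}$ and $\tau$. On the real side, with $(v',h')=\Xi(\eta,\xi)$, the relation $[v',h']=[v'\tau,h']=(v'\tau)\bigl(h'(v'\tau)h'^{-1}\bigr)$ (Equation~\ref{Eq:Commutator} and the lines after it) presents $[v',h']$ as the product of the perfect matching $v'\tau=\phi_v(\alpha)\tau\phi_v(\alpha)^{-1}$ and its $H_n$-conjugate by $h'=\phi_h(\alpha)$. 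Now a product of two perfect matchings $\iota_1\iota_2$ has cycle type obtained by doubling the parts of $\Lambda(\iota_1,\iota_2)$ --- the partition of $n$ counting the connected components of the graph $G(\iota_1,\iota_2)$, as in the discussion preceding Lemma~\ref{Lem:HeckeAndMatching} --- and for $\iota_i=\rho_i\tau\rho_i^{-1}$ one has $\Lambda(\iota_1,\iota_2)=\lambda$ precisely when $\rho_1^{-1}\rho_2\in\mathcal{K}_\lambda$ (conjugate $\iota_1\iota_2$ by $\rho_1^{-1}$ and compare with the definition of the double coset $\mathcal{K}_\lambda$). Thus compatibility of $\Xi$ with the $\lambda$-grading is equivalent to the statement that the double coset carrying the mirror commutator --- that of $\eta\tau\eta$ --- coincides with the one carrying $[v',h']$ --- that of $\phi_v(\alpha)^{-1}\phi_h(\alpha)\phi_v(\alpha)$. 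This is exactly what the choice of $\alpha$ is designed to force: substituting the monotone factorization $\alpha=\phi_v(\alpha)\phi_h(\alpha)$ into the two displayed factorizations and using that $\phi_h(\alpha)\in H_n$ commutes with $\tau$, that $\phi_v(\alpha)$ represents the coset $\alpha H_n$, and that the double-coset index is invariant under left and right multiplication by $H_n$ and under inversion, one finds both elements in the same double coset. Granting this, $\Xi$ carries $\mathcal{O}_{\text{mirr}}^{\circ}(\lambda)$ into $\mathcal{O}_{\Real}^{\circ}(\lambda)$ for each $\lambda\vdash n$; the identical computation applied to $\Xi^{-1}$ gives the reverse inclusion, and injectivity is inherited from the ambient bijection.

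The step I expect to be the main obstacle is this final double-coset identification. The delicate point is that $\phi_v,\phi_h$ are defined relative to the \emph{fixed} involution $\tau$ and the \emph{fixed} ordering of $\{1,\bar1,\ldots,n,\bar n\}$, so one must check carefully that the perfect matching and the ``$H_n$-part'' implicitly packaged inside $\eta\tau\eta$ are transported by $\Xi$ onto exactly the pair $\bigl(v'\tau,\,h'\bigr)$ whose relative position has the correct double-coset index. Concretely this is a short computation in the Hecke algebra of the Gelfand pair $(S_{2n},H_n)$, and it is the only place in the argument requiring anything beyond formal bijection-chasing.
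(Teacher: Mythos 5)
Your first step (that $\Xi$, read with $\alpha=\eta$ as forced by the paper's description of the inverse, is a bijection of the ambient sets via the unique monotone factorization $\eta=\phi_v(\eta)\phi_h(\eta)$) is correct, and your reduction of the $\lambda$-compatibility to a double-coset identity is also correct: the mirror profile of $(\eta,\xi)$ is the double coset of $\eta\tau\eta$, the real profile of $\Xi(\eta,\xi)$ is the double coset of $\phi_v(\eta)^{-1}\phi_h(\eta)\phi_v(\eta)$. But the identification you defer to ``a short computation in the Hecke algebra'' is exactly where the argument fails, and it fails not for lack of detail but because the identity is false. Take $n=3$, $\tau=(1\,\bar1)(2\,\bar2)(3\,\bar3)$, $\phi_v=(\bar1\,2)(1\,3)$ (a legitimate strictly monotone product, $b_1=2<b_2=3$) and $\phi_h=(1\,\bar1)\in H_3$, so $\eta=\phi_v\phi_h=(1\,2\,\bar1\,3)$ and $\xi=\tau\eta\tau$. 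Then $[\eta,\xi]=(1\,2\,\bar3)(3\,\bar2\,\bar1)$ has cycle type $[3^2]$, i.e.\ mirror profile $\lambda=[3]$, while $v=\phi_v\tau\phi_v^{-1}\tau=(1\,\bar2\,3)(2\,\bar1\,\bar3)$ and $h=(1\,\bar1)$ give $[v,h]=(1\,\bar1)(\bar2\,\bar3)$ of cycle type $[2^21^2]$, i.e.\ real profile $\lambda=[2,1]$; equivalently $\eta\tau\eta\in\mathcal K_{[3]}$ but $\phi_v^{-1}\phi_h\phi_v=(2\,3)\in\mathcal K_{[2,1]}$. The invariances you invoke only reduce $\eta\tau\eta=\phi_v\tau\phi_h\phi_v\phi_h$ to $\phi_v\tau\phi_h\phi_v$ (stripping the right-hand $\phi_h$); the residual left factor $\phi_v\tau$ cannot be absorbed because $\phi_v\notin H_n$, and the example shows no formal manipulation can close this.

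For comparison, the paper's own proof is even shorter than yours: it only exhibits the inverse map $(v,h)\mapsto(\sigma_1\cdots\sigma_k h,\,\tau\sigma_1\cdots\sigma_k h\tau)$ and never discusses compatibility with $\lambda$, so you have in fact isolated the real content more honestly than the source. What does survive of your analysis is that the first matching is transported correctly ($\eta\tau\eta^{-1}=\phi_v\tau\phi_v^{-1}=v\tau$); the trouble is that the second matching on the mirror side is $\tau(\eta^{-1}\tau\eta)\tau$, which lies in the same $H_n$-orbit as $v\tau$ but is conjugate to it by an element of $H_n$ that differs from $\phi_h(\eta)^{-1}$ by a correction depending on $\phi_v(\eta)$. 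The equality of cardinalities $|\mathcal{O}_{\text{mirr}}^{\circ}(\lambda)|=|\mathcal{O}_{\Real}^{\circ}(\lambda)|$ is very likely true (for $\mathcal H(1,1)$ it is consistent with the totals $1,3,7,10,19$ in the paper's table against Theorem \ref{Th:Genfunction}), but establishing it requires either building that correction into the definition of $\Xi$ or a genuinely different (e.g.\ character-theoretic) argument; neither your write-up nor the paper supplies one.
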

	\begin{proof}
		The only point left to verify is the existence of the inverse map to $\Xi$.
		Indeed, to a pair $(v,h)\in B_n^\sim\times H_n$ we associate
		an involution $v\tau$ without fixed points, cf. Equation \ref{Eq:Commutator}. Then use the decomposition
		of Lemma \ref{Lemma:MonoInvo}, and map $(v,h)$ to 
		$(\sigma_1\dots\sigma_{k}h,\tau \sigma_1\dots\sigma_{k}h\tau)$.
	\end{proof}
	
	\begin{remark}
    Here we considered two kinds of real origami
    structures associated with two anti-holomorphic involutions on the torus, whose factors were a cylinder and a M\"obius band. The third anti-holomorphic involution is fixed point free and factorizes the torus to the Klein bottle.
    However, it has no independent interest since this case reduces to the case of complex origami enumeration.
    \end{remark}

    \begin{remark}
    Although real and mirror origami are in one-to-one correspondence, their vertical separatrix diagrams are different.
    For example, every real origami in the stratum $\mathcal{H}(1,1)$ has vertical separatrix diagram of type IIa, see Figure \ref{Fig.Separatrix1} and the discussion thereafter.
\begin{table}[h]\label{mirr}\caption{Number of mirror origami in the stratum $\mathcal{H}(1,1)$ for each degree and vertical separatrix diagram.} 	\begin{tabular}{|l|r|r|r|r|r|}
				\hline
type$\backslash$ degree &4&6&8&10&12\\ \hline
1-cyl type I&0&1/2&3/2&3&7/2\\\hline
2-cyl type IIa&0&1/2&1&3&7/2\\\hline
2-cyl type IIb&1&2&4&4&11\\\hline
3-cyl type III&0&0&1/2&0&1\\\hline
Total&1&3&7&10&19\\\hline
                \end{tabular}
                \end{table}
    At the same time, the vertical separatrix diagram of a mirror origami in $\mathcal{H}(1,1)$ can be any out of 4  admissible ones. Their distribution for the small values of the degree is displayed in Table \ref{mirr}.
    \end{remark}
       
       	\section{Complex origami and quasimodularity}
	In this section we provide a relatively fast algorithm for enumeration of complex origami in the spirit
	of Theorem \ref{Prop:SumZonal} \footnote{What
		we call complex origami are the ordinary ones, but we emphasize their
		difference from the ``real'' origami introduced earlier in the article.}.
	We then express complex origami in terms of strictly monotone
	double Hurwitz numbers, and recover the quasimodularity of their generating function.
	
	\subsection {Complex origami}\label{Section:ComplexOrigami}
	
	\begin{definition}\label{Def:ComplexOrigami}
		An origami $\mathcal{O}_{\lambda,n}$ is a (possibly disconnected) finite cover 
		$f: X\to\ \Complex/(\Integer\oplus i\Integer)$ of degree $n$,
		branched only over the origin $0$ with ramification profile $\lambda,\;\lambda\vdash n$.
        
		Equivalently, an origami $\mathcal{O}_{\lambda,n}$ is a pair of 
		permutations $(\sigma,\rho)$ in $S_n$ such that
		the commutator $[\sigma,\rho ]= \sigma\rho\sigma^{-1}\rho^{-1}$ belongs
		to the conjugacy class $C_\lambda$ in $S_n$;
		see \cite{Ker09} for details.
	\end{definition}
	
	We are interested in the enumeration of isomorphism classes of origami, and
	to this purpose define the following numbers:
	
	\begin{gather}
		h_{\lambda,n}^\circ= \frac{1}{n!}\#\{\mathcal{O}_{\lambda,n}\}.
	\end{gather}
	
	These numbers can be computed via the classical 
	representation theory of the symmetric group $S_n$. To
	this end, let $\lambda$ be a Young diagram drawn in English notation.
	Pick up a box $\square$ and denote by $a(\square)$ the \textit{arm-length} of $\square$ defined
	as the number of boxes lying strictly to the right of $\square$.
	Similarly denote by $\ell(\square)$ the \textit{leg-length} the
	number of boxes lying strictly below $\square$.
	We define $d^\lambda$ the hook product of $\lambda$ as 
	\begin{equation}
		d_\lambda\bydef\prod_{\square\in\lambda}(a(\square)+\ell(\square)+1).
	\end{equation}
	
	The quantity $d^\lambda$ has a well known representation theoretic interpretation,
	namely 
	
	\begin{equation}
		d_\lambda = \frac{|\lambda|!}{\chi^\lambda(1)}
	\end{equation}
	where $|\lambda|=\lambda_1+\lambda_2+\dots$
	and $\chi^\lambda(1)$ is the dimension of the irreducible representation of $S_n$ indexed
	by $\lambda$ (see \cite{Stanley}). The following proposition can be found
	in \cite{Stanley} exercise 7.68. For completeness we provide here a proof. 
	
	\begin{proposition}\label{Prop:EnumOriDisco}
		Denote by $p_i$ the $i^{th}$ power sum, and by $s_i$ 
		the $i^{th}$ Schur polynomial. For a partition $\lambda=[\lambda_1\lambda_2\dots]$
		put $p_\lambda=p_{\lambda_1}p_{\lambda_2}\dots$ and
		$s_\lambda=s_{\lambda_1}s_{\lambda_2}\dots$. Then the following equality holds:
		
		\begin{equation}\label{Eq:TorusSchur}
			Q_n^\circ(\mathbf{p}) =\sum_{\lambda\vdash n}d_\lambda s_\lambda = \sum_{\lambda\vdash n}
			h_{\lambda,n}^\circ p_\lambda.
		\end{equation}
	\end{proposition}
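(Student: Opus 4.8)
The plan is to prove the two equalities in \eqref{Eq:TorusSchur} separately, both by standard manipulations in the character theory of $S_n$. The right-hand equality $\sum_{\lambda\vdash n}d_\lambda s_\lambda = \sum_{\lambda\vdash n}h_{\lambda,n}^\circ p_\lambda$ is essentially a change of basis; the left-hand equality $Q_n^\circ(\mathbf p) = \sum_{\lambda\vdash n}d_\lambda s_\lambda$ is a definition-unpacking combined with the Frobenius-type formula for counting solutions of $[\sigma,\rho]\in C_\lambda$. So the first thing I would do is fix notation: recall the Frobenius characteristic map, the expansion $s_\lambda = \sum_{\mu\vdash n} z_\mu^{-1}\chi^\lambda(\mu)\,p_\mu$ (Schur functions in terms of power sums), and the orthogonality relations $\sum_{\lambda}\chi^\lambda(\mu)\chi^\lambda(\nu) = z_\mu\,\delta_{\mu\nu}$ and $\sum_{\mu} z_\mu^{-1}\chi^\lambda(\mu)\chi^{\lambda'}(\mu) = \delta_{\lambda\lambda'}$.

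Next I would compute $h_{\lambda,n}^\circ$ explicitly. The number of pairs $(\sigma,\rho)\in S_n^2$ with $[\sigma,\rho]$ in a fixed conjugacy class $C_\lambda$ is a classical application of the formula for the number of ways to write a given permutation as a product over given conjugacy classes; concretely, for any $g\in S_n$,
\begin{equation}
\#\{(\sigma,\rho) : \sigma\rho\sigma^{-1}\rho^{-1} = g\} = n!\sum_{\mu\vdash n}\frac{\chi^\mu(g)}{\chi^\mu(1)},
\end{equation}
which follows from expanding the indicator of $\{[\sigma,\rho]=g\}$ in irreducible characters and using the fact that $\sum_\rho \chi^\mu(\sigma\rho\sigma^{-1}\rho^{-1})$ collapses via orthogonality. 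Multiplying by $|C_\lambda| = n!/z_\lambda$ and dividing by $n!$ gives
\begin{equation}
h_{\lambda,n}^\circ = \frac{1}{z_\lambda}\sum_{\mu\vdash n}\frac{\chi^\mu(\lambda)}{\chi^\mu(1)} = \frac{1}{z_\lambda}\sum_{\mu\vdash n} d_\mu\,\frac{\chi^\mu(\lambda)}{n!/\,\,}
\end{equation}
— more cleanly, $h_{\lambda,n}^\circ = z_\lambda^{-1}\sum_{\mu}\chi^\mu(\lambda)/\chi^\mu(1)$. Then I substitute this into $\sum_\lambda h_{\lambda,n}^\circ p_\lambda$, interchange the sums over $\lambda$ and $\mu$, and recognize $\sum_\lambda z_\lambda^{-1}\chi^\mu(\lambda)p_\lambda = s_\mu$. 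Since $n!/\chi^\mu(1) = d_\mu$, this yields $\sum_\lambda h_{\lambda,n}^\circ p_\lambda = \sum_\mu d_\mu s_\mu$, establishing the right-hand equality.

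For the left-hand equality I would observe that $Q_n^\circ(\mathbf p)$ is, by the definition of origami via pairs of permutations in Definition \ref{Def:ComplexOrigami} together with the normalization $h_{\lambda,n}^\circ = \frac{1}{n!}\#\{\mathcal O_{\lambda,n}\}$, literally the generating polynomial $\sum_\lambda h_{\lambda,n}^\circ p_\lambda$ recording, monomial by monomial, the weighted count of origami with ramification profile $\lambda$; so in fact $Q_n^\circ(\mathbf p)$ is \emph{defined} to be this sum and the content is the middle equality, which I have just proved. If instead $Q_n^\circ$ is taken as a primitive object (e.g.\ a coefficient extracted from a global generating function), I would identify it with $\sum_\lambda h_{\lambda,n}^\circ p_\lambda$ by the same bijection between origami and commutator pairs used throughout Section \ref{Section:ComplexOrigami}. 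The main obstacle, such as it is, is bookkeeping: making sure the automorphism-weighting implicit in dividing by $n!$ matches the unweighted count of \emph{labelled} pairs $(\sigma,\rho)$, and that the conjugacy-class cardinality factor $|C_\lambda| = n!/z_\lambda$ is inserted in the right place so that the power-sum normalization $z_\lambda$ comes out correctly. Once the combinatorial factors are pinned down, both equalities are immediate consequences of the two orthogonality relations for irreducible characters of $S_n$.
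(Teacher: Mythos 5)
Your proposal is correct and follows essentially the same route as the paper: both count commutator pairs via the Frobenius character formula (the paper derives the commutator count $n!\sum_\mu\chi^\mu(g)/\chi^\mu(1)$ from the product formula for two elements of the same conjugacy class, while you quote it directly) and then convert to Schur functions through the expansion $s_\mu=\sum_\lambda z_\lambda^{-1}\chi^\mu(\lambda)p_\lambda$, which the paper packages as the Frobenius characteristic map applied to the class function $N$. The only blemish is that your intermediate display drops a factor of $n!$ — the correct value is $h^\circ_{\lambda,n}=z_\lambda^{-1}\sum_\mu d_\mu\chi^\mu(\lambda)$ rather than $z_\lambda^{-1}\sum_\mu\chi^\mu(\lambda)/\chi^\mu(1)$ — but your subsequent use of $n!/\chi^\mu(1)=d_\mu$ shows you intended the right normalization, and the final identity comes out correctly.
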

	
	In particular, Formula \eqref{Eq:TorusSchur} gives an expansion
	of normalized Schur polynomials in terms of the power sums.
	\begin{proof}
		The proof of \eqref{Eq:TorusSchur} is based on the Frobenius formula\footnote{It is also
			called Burnside formula in the literature.} that we recall now.
		Denote by 
		\begin{equation}
			\mathcal{N}(C_{\lambda_1},\dots,C_{\lambda_k},\omega)=\#\{(\sigma_1,\dots,
            \sigma_k,\omega)\in 
			C_{\lambda_1}\times\dots\times C_{\lambda_k}\times C_\rho\,|\, \sigma_1\dots \sigma_k
            =\omega\}\,,
		\end{equation}
		where $C_{\lambda_i}$'s are arbitrary conjugacy classes in $S_n$ and $C_\rho$ is fixed. Then the Frobenius formula states that
		
		\begin{equation}\label{Eq:FrobForm}
			\mathcal{N}(C_{\lambda_1},\dots,C_{\lambda_k},\omega)= \frac{|C_{\lambda_1}|\dots|C_{\lambda_k}|}{n!}
			\sum_{\mu \vdash n}\frac{\chi^\mu(C_{\lambda_1})\dots\chi^\mu(C_{\lambda_k})\chi^\mu(\omega)}{\chi^\mu(1)^{k-1}}\,,
		\end{equation}
		where $\chi^\mu$ is the character of the irreducible representation
		indexed by $\mu$ of dimension $\chi^\mu(1)$, and  $\chi^\mu(C_{\lambda})$ is its value on the conjugacy class $C_\lambda$.
		
		We refer to \cite{LZ04}, Appendix A, for a proof and further details.
		
		For a given $\sigma\in S_n$ define
		\begin{equation}
			N(\sigma) = \#\{(\alpha,\beta)\in S_n\times S_n\ | \ \alpha\beta\alpha^{-1}\beta^{-1}=\sigma\}.
		\end{equation} 
		The function $N$, the sum of all commutators, is a function class on $S_n$, i.e constant
		on the conjugacy classes, and thus can be expressed in terms of characters of $S_n$.
		Note that  $\alpha$ and $\alpha^{-1}$ are in the same conjugacy class, say $C_\lambda$.
		If $\hat{\alpha}$ is a fixed conjugate of $\alpha^{-1}$, then there are 
		$\frac{n!}{|C_\lambda|}$ elements $\beta\in S_n$ such that  
		$\hat{\alpha} = \beta\alpha^{-1}\beta^{-1}$, giving
		
		\begin{equation}
			N(\sigma) = \sum_{\lambda\vdash n}\frac{n!}{|C_\lambda|}
			\#\{(\alpha,\hat{\alpha})\in C_\lambda\times C_\lambda\ | \ \alpha\hat{\alpha}=\sigma\}.
		\end{equation} 
		Using Formula \eqref{Eq:FrobForm}, we get
		\begin{gather}
			\#\{(\alpha,\hat{\alpha})\in C_\lambda\times C_\lambda\ | \ \alpha\hat{\alpha}=\sigma\}=
			\frac{|C_\lambda|^2}{n!}\sum_{\mu\vdash n}\frac{\chi^\mu(C_\lambda)\chi^\mu(C_\lambda)
				\chi^\mu(\sigma)}{\chi^\mu(1)}\,,
		\end{gather}
		that implies
		\begin{equation}
			N = \sum_{\lambda}|C_\lambda|\sum_{\mu\vdash n}\frac{\chi^\mu(C_\lambda)\chi^\mu(C_\lambda)
				\chi^\mu}{\chi^\mu(1)}.
		\end{equation}
		The group algebra $\Complex[S_n]$ carries
		an inner product given by
		\begin{equation}
			\langle f,g\rangle=\frac{1}{n!}\sum_{\sigma\in S_n}f(\sigma)g(\sigma).
		\end{equation}
		From there we get that 
		\begin{gather}
				\langle N,\chi^\mu\rangle = \frac{1}{\chi^\mu(1)}\sum_\lambda |C_\lambda|
				\chi^\mu(C_\lambda)^2=\frac{n!}{\chi^\mu(1)} = d_\mu
		\end{gather}
		by the orthogonality of the characters.
		
		The Frobenius characteristic map $ch$ from the space of
		class functions $f$ to the space of symmetric polynomials is given by
		\begin{equation}
			ch(f) =\frac{1}{n!}\sum_{\sigma \in S_n}f(\sigma)p_{c(\sigma)}= \sum_{\mu\vdash n} \frac{1}{z_\mu}f(\mu)p_\mu\,,
		\end{equation}
		where $c(\sigma)$ stands for the cycle type of $\sigma\in S_n$, 
		$\mu=[1^{m_1} 2^{m_2} \ldots]$, and $z_\mu = \prod i^{m_i}m_i!$.
		Using the formula for the Schur polynomials
		$$s_\lambda = \sum_\mu\frac{1}{z_\mu}\chi^\lambda(\mu)p_\mu\,,$$
		we finally get the following equality (that is actually true for any class
		function of $S_n$)
		\begin{equation}
			ch(N) = \sum_{\lambda\vdash n} \langle N,\chi^\lambda\rangle s_\lambda
		\end{equation}
		finishing the proof.
	\end{proof}
	
	Consider now the generating function
	
	\begin{equation}\label{Eq:GenFunct}
		\mathcal{Q}(\mathbf{p},t)^\circ :=\sum_n{Q}_n^\circ(\mathbf{p}) t^n= \sum_n\sum_{\lambda\vdash n}
		h_{\lambda,n}^\circ p_\lambda t^n.
	\end{equation}
	Its logarithm 
	
	\begin{equation}\label{Eq:GenFunctConn}
		\mathcal{Q}(\mathbf{p},t):=\log\mathcal{Q}(\mathbf{p},t)^\circ=\sum_n Q_n(\mathbf{p})t^n
	\end{equation}
	enumerates \emph{connected} complex origami. More precisely, the coefficient $$Q_n(\mathbf{p})=\sum_{\lambda\vdash n} h_{\lambda,n} p_\lambda$$
    is a homogeneous polynomial 
    in $p_1,p_2,\ldots$ of degree $n$, where $p_i$ has degree $i$. 
    The number $h_{\lambda, n}$ counts the origami of degree $n$ and ramification profile 
    $\lambda\vdash n$ with weights reciprocal to the orders of their groups of automorphisms.
    
    Proposition \ref{Prop:EnumOriDisco} allows us to compute the polynomials $Q_n$ relatively fast (it took about 40 min. of an average PC time to get $Q_n$ up to $n=30$). The first
    15 polynomials are listed in Appendix B\footnote{The coefficients $h_{\lambda,n}$ with
    $\lambda$ of the form $[2i+1,\, 1^j],\;i=1,2,3,$ and $2i+1+j\leq n\leq 15$ were previously computed in \cite{Nenasheva} by a different method
    and agree with our results.}.
	
	\subsection{Enumeration by degree and genus}
    A simple specialization of \eqref{Eq:GenFunctConn}) provides the enumeration of
    origami by genus and degree. Under the substitution $p_i\mapsto x^{i-1}$ we have
    \begin{equation}
Q_n(\mathbf{p})\big |_{p_i=x^{i-1}}=\sum_{\lambda\vdash n} h_{\lambda,n} \prod_{i=1}^k x^{\lambda_i-1}\,,
        \end{equation}
	where $\lambda=[\lambda_1\ldots\lambda_k]$ is a partition of $n$ of length $k=\ell(\lambda)$.
    Regrouping the terms in the right hand side of the above formula and taking the Riemann-Hurwitz formula into account, we get
    \begin{equation}
        Q_n(\mathbf{p})\big |_{p_i=x^{i-1}}=\sum_{g\geq1,n\geq1}
		H_{g,n}x^{2g-2}\,,
    \end{equation}
	where 
\begin{equation}
    H_{g,n} = \sum_{\substack{\lambda\vdash n\\\ell(\lambda)=n+2-2g}}h_{\lambda,n}
	\end{equation} 
        is the number of connected origami of degree $n$ and genus $g$;
see Tables 1 to 4 at the end of the paper
	for the results up to degree 30 and genus 15.
    
	\subsection{Quasimodularity of generating functions} 
As it is proven in \cite{EO2001} the generating functions $\mathcal{Q}_g(t)=\sum_n H_{g,n} t^n$ are quasimodular for any fixed $g\geq 2$.
Here we describe a different approach to the quasimodularity of these generating functions. To do it,
we will relate them to the results of \cite{HIL22} using the
unique decomposition of an arbitrary permutation into a product of strictly monotone transpositions.
	
	To be more precise, a sequence of transpositions $\tau_1,\dots\tau_k,\;\tau_i=(a_i,b_i)$,
	is called \emph{strictly monotone} if $a_i<b_i$ and $b_i<b_{i+1}$ for all $i=1,\ldots,k$.
	Introduce the numbers $H^\sim_{g,n}$ by the formula
		\begin{align}\label{Def}
			\qquad H^\sim_{g,n}=&\frac{1}{n!}
            \#\left\{(\sigma,\rho,
		\tau_1,\dots,\tau_{2g-2})\,|\,\right. \\
		&\left.
		\tau_i\in C_{[2,1^{n-2}]},\; (\sigma,\rho)\in S_n\times S_n\ 
		\text{and}\ 
		\sigma\rho\sigma^{-1}\rho^{-1}\tau_1,\dots,\tau_{2g-2}=id  \right\}\,,
		\end{align}
		where the sequence of transpositions $\tau_1,\dots,\tau_{2g-2}$ 
        \begin{itemize}
            \item is strictly monotone, and
            \item $\langle \tau_1,\dots,\tau_{2g-2},\sigma,\rho \rangle$ acts
		transitively on $\{1,\dots,n\}.$
	\end{itemize}
	The numbers $H^\sim_{g,n}$ are known in the literature 
	as (a special case of) connected double strictly monotone Hurwitz numbers.
    
	The following fact was established in \cite{HIL22}:
	
	\begin{theorem}[\cite{HIL22}, Thm 4.4]
		For a fixed genus $g\geq2$ the generating function
		$${\mathcal{Q}}_g^\sim(t)=\sum_{n}H^\sim_{g,n}t^n$$
		is a quasimodular form of mixed weight not larger than $6g-6$.\footnote{What we denote by 
			$H^\sim_{g,n}$ is denoted by $H^{g,d}_{0,0,2g-2}()$ in
			\cite{HIL22}.}
	\end{theorem}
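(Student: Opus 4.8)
The plan is to run the character-theoretic machinery of Section~\ref{Section:ComplexOrigami} in parallel with the Bloch--Okounkov theorem (Theorem~\ref{Th:OkBlo}). Write $H^{\sim,\circ}_{g,n}$ for the disconnected analogue of $H^{\sim}_{g,n}$ (the same count with the transitivity condition dropped), and set $k=2g-2$. The first step is to obtain a closed formula for $H^{\sim,\circ}_{g,n}$. Recall the Jucys--Murphy elements $J_b=\sum_{a<b}(a,b)\in\Complex[S_n]$: the strictly monotone products $\tau_1\cdots\tau_k$ of $k$ transpositions sum, as a formal element of $\Complex[S_n]$, to the elementary symmetric polynomial $e_k(J_1,\dots,J_n)$, which is central and acts on the irreducible module $V_\mu$ by the scalar $e_k(\mathbf c_\mu)$, where $\mathbf c_\mu=\{\,c(\square)=j-i:\square\in\mu\,\}$ is the content multiset of $\mu$. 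Feeding this into the Frobenius formula \eqref{Eq:FrobForm} exactly as in the proof of Proposition~\ref{Prop:EnumOriDisco} --- the commutator contributes the class function $N=\sum_\mu d_\mu\chi^\mu$, while $\sum_{\pi}(\#\{\text{strictly monotone length-}k\text{ factorizations of }\pi\})\,\chi^\mu(\pi)=\chi^\mu(1)\,e_k(\mathbf c_\mu)$ --- I expect to arrive at
\[
H^{\sim,\circ}_{g,n}=\sum_{\mu\vdash n} e_{2g-2}(\mathbf c_\mu).
\]

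Second, I would pass from disconnected to connected counts. Package these numbers into $\mathcal Z^{\circ}(q,x):=\sum_{n,k\ge0}q^nx^k\sum_{\mu\vdash n}e_k(\mathbf c_\mu)=\sum_{n\ge0}q^n\sum_{\mu\vdash n}\prod_{\square\in\mu}(1+x\,c(\square))$. The exponential formula $\mathcal Z^{\circ}=\exp\mathcal Z$, where $\mathcal Z$ is the corresponding connected generating function, holds because restricting a tuple $(\sigma,\rho,\tau_1,\dots,\tau_k)$ to an orbit of $\langle\sigma,\rho,\tau_\bullet\rangle$ produces a transitive tuple on that orbit, and --- the strict monotonicity $b_1<\dots<b_k$ being equivalent to its restrictions to the orbits together with the canonical, multiplicity-free recovery by sorting --- the tuple is determined by its restrictions. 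Since $[x^0]\mathcal Z^{\circ}=\sum_\mu q^{|\mu|}=\prod_{m\ge1}(1-q^m)^{-1}$ and $\langle e_0(\mathbf c)\rangle_q=1$, one gets $\mathcal Z^{\circ}(q,x)=\bigl(\prod_m(1-q^m)^{-1}\bigr)\bigl(1+\sum_{k\ge1}\langle e_k(\mathbf c)\rangle_q\,x^k\bigr)$, where $\langle\cdot\rangle_q$ is the $q$-bracket; hence, identifying the variable $t$ with $q$,
\[
\mathcal{Q}_g^{\sim}(q)=[x^{2g-2}]\log\mathcal Z^{\circ}(q,x)=[x^{2g-2}]\log\Bigl(1+\sum_{k\ge1}\langle e_k(\mathbf c)\rangle_q\,x^k\Bigr)\qquad(g\ge2),
\]
the non-modular summand $-\sum_m\log(1-q^m)$ being annihilated because $2g-2>0$.

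Third, I would apply Bloch--Okounkov. The function $\mu\mapsto\sum_{\square\in\mu}c(\square)^{\ell}$ is a shifted symmetric function --- classically it is the central character of the power sum $p_\ell(J_1,\dots,J_n)$, hence lies in $\Lambda^*$, and its top-weight part is proportional to $\mathbf P_{\ell+1}$, so its weight is at most $\ell+2$. Expanding $e_k$ in content power sums by Newton's identities, the term $\tfrac1{k!}\,p_1(\mathbf c)^k$ is the unique one of maximal weight $3k$, so $e_k(\mathbf c)\in\Lambda^*$ has weight at most $3k$. By Theorem~\ref{Th:OkBlo}, $\langle e_k(\mathbf c)\rangle_q$ is then a quasimodular form of weight $\le 3k$. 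Finally, in the logarithm above each monomial $\langle e_{k_1}(\mathbf c)\rangle_q\cdots\langle e_{k_r}(\mathbf c)\rangle_q$ has $k_1+\dots+k_r=2g-2$, hence weight $\le 3(k_1+\dots+k_r)=6g-6$; summing the finitely many such terms shows that $\mathcal{Q}_g^{\sim}(q)$ is quasimodular of mixed weight not larger than $6g-6$.

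The step I expect to be the main obstacle is the third one: one must know that content power sums are \emph{genuine} shifted symmetric functions and, more delicately, locate them precisely in the weight filtration of $\Lambda^*$, so that the sharp bound $6g-6$ --- rather than merely ``quasimodular of some weight'' --- emerges. This is the Kerov--Olshanski description of the algebra of polynomial functions on Young diagrams, and one has to check that the weight bookkeeping survives both the Newton-identity expansion of $e_{2g-2}$ and the logarithm. By comparison, the closed formula $H^{\sim,\circ}_{g,n}=\sum_{\mu}e_{2g-2}(\mathbf c_\mu)$ and the exponential-formula reduction are essentially formal once the Jucys--Murphy dictionary is in place.
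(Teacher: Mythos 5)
The first thing to say is that the paper contains no proof of this statement: it is imported verbatim from \cite{HIL22} (Theorem 4.4 there) and used as a black box, so there is no internal argument to compare yours against. What you have written is in effect a reconstruction of the proof of the cited source, and it is correct. Your Step 1 checks out: strictly monotone length-$k$ products are precisely the terms of $e_k(J_1,\dots,J_n)$, this element is central and acts on $V_\mu$ by $e_k(\mathbf c_\mu)$, and pairing with the class function $N=\sum_\mu d_\mu\chi^\mu$ from the proof of Proposition \ref{Prop:EnumOriDisco} gives $\sum_\mu d_\mu\,\chi^\mu(1)\,e_k(\mathbf c_\mu)=n!\sum_\mu e_k(\mathbf c_\mu)$, hence $H^{\sim,\circ}_{g,n}=\sum_{\mu\vdash n}e_{2g-2}(\mathbf c_\mu)$. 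In Step 2 you correctly isolate the one point where strict monotonicity could break the exponential formula, namely compatibility with the orbit decomposition: since the $b_i$ are distinct within an orbit (by monotonicity) and across orbits (trivially), and transpositions supported on different orbits commute, sorting by $b$ gives a canonical bijection between global strictly monotone tuples and collections of per-orbit ones, so $\mathcal Z^\circ=\exp\mathcal Z$ holds with the extra variable $x$, and the Dedekind factor drops out of $[x^{2g-2}]$ for $g\ge2$. The weight bookkeeping in Step 3 is also right, and the dependency you flag as the main obstacle is in fact elementary: from $\sum_{j=1}^{\mu_i}(j-i)^\ell=S_\ell(\mu_i-i)-S_\ell(-i)$ with $S_\ell$ the Faulhaber polynomial of degree $\ell+1$, one gets $p_\ell(\mathbf c)=\tfrac{1}{\ell+1}\mathbf P_{\ell+1}+(\text{terms of lower weight})$, so $p_\ell(\mathbf c)\in\Lambda^*$ has filtered weight $\ell+2$; a Newton monomial $p_{j_1}\cdots p_{j_r}$ with $\sum j_i=k$ then has weight $k+2r\le 3k$, with equality only for $p_1^k$, whence $e_k(\mathbf c)$ has weight $\le 3k$ and each monomial $\langle e_{k_1}\rangle_q\cdots\langle e_{k_r}\rangle_q$ in the logarithm has weight $\le 3(2g-2)=6g-6$. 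The only cosmetic point is that Theorem \ref{Th:OkBlo} as stated in the paper is the graded version, so you should say explicitly that you first decompose $e_k(\mathbf c)$ into homogeneous components before applying it and then sum the resulting quasimodular forms of the various weights; with that, your argument is a complete proof, obtained by the same route (Jucys--Murphy elements, content functions, Bloch--Okounkov) as the reference the paper cites.
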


    Quasimodularity of the generating function $\mathcal{Q}_g(t)=\sum_n H_{g,n} t^n$ is then an immediate consequence of
    
	\begin{theorem}\label{Th:HsimisH}
		In the notation as above, we have 
        \begin{equation}
        H^\sim_{g,n}=H_{g,n}.
        \end{equation}
		\end{theorem}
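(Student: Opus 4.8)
The plan is to exhibit a bijection between the two combinatorial models underlying $H_{g,n}$ and $H_{g,n}^\sim$. On one side we have tuples $(\sigma,\rho)\in S_n\times S_n$ with $[\sigma,\rho]$ of cycle type $\lambda=[\lambda_1\ldots\lambda_k]$, and we sum over all $\lambda\vdash n$ of fixed length $k=n+2-2g$; equivalently (after dividing by $n!$) we count $(\sigma,\rho)$ with $[\sigma,\rho]$ having exactly $k$ cycles. On the other side we have tuples $(\sigma,\rho,\tau_1,\ldots,\tau_{2g-2})$ with the $\tau_i$ a strictly monotone sequence of transpositions and $[\sigma,\rho]\tau_1\cdots\tau_{2g-2}=\mathrm{id}$. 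The key is the unique factorization statement: any permutation $\pi\in S_n$ with exactly $k$ cycles (so $n-k=2g-2$ when $\ell(\lambda)=k$) admits a \emph{unique} decomposition $\pi=\tau_1\cdots\tau_{n-k}$ into a strictly monotone sequence of transpositions. This is the classical fact behind strictly monotone (equivalently, "Jucys--Murphy") factorizations; it is the analogue, for general permutations, of Lemma \ref{Lemma:MonoInvo} (which treats the fixed-point-free involution case), and the excerpt already flags this parallel.

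First I would set up the map in the direction $H_{g,n}\to H_{g,n}^\sim$: given $(\sigma,\rho)$ with $[\sigma,\rho]=\pi$ of cycle type $\lambda$, $\ell(\lambda)=n+2-2g$, apply the unique strictly monotone factorization to $\pi^{-1}$ (or to $\pi$, depending on the orientation convention) to obtain $\tau_1,\ldots,\tau_{2g-2}$ with $\tau_1\cdots\tau_{2g-2}=\pi^{-1}$, hence $[\sigma,\rho]\tau_1\cdots\tau_{2g-2}=\mathrm{id}$. Conversely, given a valid tuple $(\sigma,\rho,\tau_1,\ldots,\tau_{2g-2})$ on the monotone side, simply forget the $\tau_i$; the product $\tau_1\cdots\tau_{2g-2}=[\sigma,\rho]^{-1}$ is then a permutation which, because the $\tau_i$ are strictly monotone and there are exactly $2g-2$ of them, has exactly $n-(2g-2)=n+2-2g$ cycles, so $[\sigma,\rho]$ lies in a conjugacy class $C_\lambda$ with $\ell(\lambda)=n+2-2g$. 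Uniqueness of the monotone factorization makes these two maps mutually inverse. I would also check that the transitivity conditions match: the group $\langle\sigma,\rho\rangle$ equals $\langle\sigma,\rho,\tau_1,\ldots,\tau_{2g-2}\rangle$ after the $\tau_i$ are recovered from $[\sigma,\rho]$, since each $\tau_i$ is a word in $\sigma,\rho$ via the commutator; so transitivity of one side forces transitivity of the other. Finally both counts carry the same $\tfrac{1}{n!}$ normalization and the same automorphism weighting (the $S_n$-action by simultaneous conjugation is compatible with the bijection), so the weighted counts agree.

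The main obstacle is pinning down the uniqueness and the cycle-count bookkeeping in the strictly monotone factorization, in particular making sure the orientation conventions line up so that a genus-$g$ cover (for which $[\sigma,\rho]$ has $n+2-2g$ cycles by Riemann--Hurwitz) corresponds to precisely $2g-2$ monotone transpositions and that every such minimal-length monotone word arises. This is a standard but somewhat delicate combinatorial lemma; I expect the cleanest route is to either cite the known result on strictly monotone transitive factorizations (the same input used in \cite{HIL22}) or to adapt, essentially verbatim, the proof of Lemma \ref{Lemma:MonoInvo}, replacing the involution $\iota$ and the fixed $\tau$ there with a general target permutation and tracking how each monotone transposition merges two cycles, so that after $n-k$ steps one reaches a permutation with $k$ cycles and no fewer steps suffice. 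Everything else — the matching of transitivity and of automorphism weights — is routine once the bijection is in place.
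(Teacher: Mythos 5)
Your argument is correct and is essentially the paper's own proof: both rest on the unique strictly monotone factorization of a permutation of cycle type $\lambda$ into $n-\ell(\lambda)$ transpositions (Lemma \ref{Lem:Monot}, quoted from \cite{BCDG19}) together with the Riemann--Hurwitz identity $n-\ell(\lambda)=2g-2$. One small inaccuracy in a side check: the individual $\tau_i$ are \emph{not} words in $\sigma,\rho$ (only their product $[\sigma,\rho]^{-1}$ is), so the transitivity match should instead be justified by noting that in a minimal-length factorization each $\tau_i$ is supported on a single cycle of $[\sigma,\rho]$ and hence preserves the orbits of $\langle\sigma,\rho\rangle$.
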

        
	 \begin{proof}
	     The proof is based on
         \begin{lemma}[\cite{BCDG19}, Lemma 2.5]
        \label{Lem:Monot}
		Each permutation in $S_n$
		can be uniquely decomposed into the product of a strictly monotone sequence of
		transpositions. Moreover, if the permutation has cycle type $\lambda$, 
		then the number of transpositions is $n-\ell(\lambda)$, where $\ell(\lambda)$ is the
		length of the partition $\lambda$.
         \end{lemma}
         	
	In combinatorial terms, the enumeration of origami is given by 
	\begin{equation}\label{Eq:Commut}
		n!\cdot H_{g,n}=\# \{(\sigma,\rho)\in S_n\times S_n|\ \sigma\rho\sigma^{-1}\rho^{-1} =\omega\}
	\end{equation}
	  with a permutation $\omega$ in some fixed conjugacy class $C_\lambda$ of $S_n$
	where $\lambda=[\lambda_1,\dots,\lambda_k]$.
	  By lemma \ref{Lem:Monot}, we can further rewrite \eqref{Eq:Commut} as
	\begin{equation}
		n!\cdot H_{g,n}=\#\{\sigma,\rho)\in S_n\times S_n|\ \sigma\rho\sigma^{-1}\rho^{-1} =\tau_1\dots\tau_{n-\ell(\lambda)}\} 
	\end{equation}
        in a \emph{unique} way,
	where $\tau_1,\dots,\tau_{n-\ell(\lambda)}$ is a strictly increasing sequence of transpositions.
	Furthermore, $n-\ell(\lambda)=2g-2$ by the Riemann--Hurwitz formula, so that the right hand side
        in \eqref{Eq:Commut} becomes the definition \eqref{Def} of $H_{g,n}^\sim$, thus completing the proof.
	\end{proof}
	
	\section{Origami and Jack functions}\label{Section:JackFunction}
	Here we observe how the Jack symmetric functions interpolate between the complex and real origamis
    count. 
    
	We introduce the Jack functions following \cite{Mac95}.
	First, define a partial order on the set of partitions given by
	\begin{equation}
		\lambda\leq\mu\, \Leftrightarrow\, |\lambda|=|\mu|\text{ and } 
		\mu_1+\dots+\mu_i\leq\lambda_1+\dots+\lambda_i\text{ for $i\geq1$}
	\end{equation}
	
	Denote by $\Lambda$ the algebra of symmetric functions with rational
	coefficient. For any partition $\mu$ we denote by $m_\mu$ the
	monomial symmetric function, by $p_\mu$ the power sum function and
	by $s_\mu$ the Schur function associated to the partition $\mu$.
	For $\alpha$ an indeterminate let $\Lambda^\alpha := \mathbb{Q}[\alpha]\otimes\Lambda$ 
	the algebra of symmetric function with rational coefficient in $\alpha$.
	We denote by $\langle,\rangle_\alpha$ the $\alpha$-deformation of the Hall scalar product
	defined by
	\begin{equation}
		\langle p_\lambda, p_\mu \rangle = z_\lambda\alpha^{\ell(\lambda)}\delta_{\lambda,\mu}
	\end{equation}
	where $\lambda$ and $\mu$ are partitions, $z_\lambda$ is given by Definition \ref{Def:ZonPoly},
	and $\ell(\lambda)$ is the number of parts of the partition $\lambda$.
	\begin{definition}[\cite{Mac95} Chapter VI.10]
		There exists a unique family
		of symmetric functions $J_\lambda^{(\alpha)}\in \Lambda^\alpha$, called the Jack functions,
		indexed by partitions and satisfying the following properties
		\begin{itemize}
			\item orthogonality: $\langle J^{(\alpha)}_\lambda,J^{(\alpha)}_\mu\rangle = 0$ for $\lambda\neq\mu$,
			\item triangularity: $[m_\mu]J^{(\alpha)}_\lambda = 0$ except if $\mu\leq\lambda$,
			\item normalization: $[m_{1^n}]J^{(\alpha)}_\lambda = n!$ for $\lambda\vdash n$,
		\end{itemize} 
		where $[m_\mu]J^{(\alpha)}_\lambda$ denotes the coefficient by the monomial $m_\mu$ in 
		$J^{(\alpha)}_\lambda$, and  $[1^n]$ is the partition with $n$ parts equal to 1.
	\end{definition}
	
	In particular, for $\alpha=1$ and $\alpha=2$, the Jack polynomials are given by
	\begin{equation}
		J^{(1)}_\lambda=d_\lambda s_\lambda \text{ and } J^{(2)}_\lambda=z_\lambda.
	\end{equation}
	
	It was proved in Section 2 that the expansion
	in terms of power sums of 
	$\sum_n\sum_{\lambda\vdash n} J^{(2)}_\lambda t^n$
	is the generating function $\mathcal{P}^\circ(t,\mathbf{p})$ for (possibly disconnected) real origami.
	Respectively, in Section 3 we have shown that the expansion
	in terms of power sums of  $\sum_n\sum_{\lambda\vdash n} J^{(1)}_\lambda t^n$
	is the generating function $\mathcal{Q}^\circ(t,\mathbf{p})$ for (possibly 
	disconnected) complex origami.
	Taking their logarithm we get the generating functions that enumerate {\it connected} origamis,
    both real and complex. Thus, we arrive at
	
	\begin{proposition}
The	Jack polynomials interpolate between the numbers of real and complex origamis.
	\end{proposition}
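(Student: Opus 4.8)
The plan is to package the two computations already carried out --- Proposition \ref{Prop:EnumOriDisco} for complex origami and Theorem \ref{Prop:SumZonal} for real origami --- as the two boundary values $\alpha=1$ and $\alpha=2$ of one Jack-function family. First I would recall from \cite{Mac95}, Chapter VI.10, that the Jack function specializes to $J^{(1)}_\lambda=d_\lambda s_\lambda$ and to the zonal polynomial $J^{(2)}_\lambda=Z_\lambda$ (Definition \ref{Def:ZonPoly}), and that in general $J^{(\alpha)}_\lambda=\sum_{\mu\vdash|\lambda|}\theta^\lambda_\mu(\alpha)\,p_\mu$ with $\theta^\lambda_\mu(\alpha)\in\mathbb{Q}[\alpha]$ polynomial in $\alpha$. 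Consequently, for each $n$ the symmetric function $\sum_{\lambda\vdash n}J^{(\alpha)}_\lambda$ has a power-sum expansion $\sum_{\mu\vdash n}c_\mu(\alpha)\,p_\mu$ whose coefficients $c_\mu(\alpha)$ are polynomials in $\alpha$, and I would set $\mathcal{J}_\alpha(\mathbf{p},t):=\sum_n\bigl(\sum_{\lambda\vdash n}J^{(\alpha)}_\lambda\bigr)t^n$.

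Next I would evaluate $\mathcal{J}_\alpha$ at the two relevant points. At $\alpha=1$, Proposition \ref{Prop:EnumOriDisco} (formula \eqref{Eq:TorusSchur}) gives $\sum_{\lambda\vdash n}J^{(1)}_\lambda=\sum_{\lambda\vdash n}d_\lambda s_\lambda=Q^\circ_n(\mathbf{p})=\sum_{\mu\vdash n}h^\circ_{\mu,n}\,p_\mu$, hence $c_\mu(1)=h^\circ_{\mu,n}$ and $\mathcal{J}_1(\mathbf{p},t)=\mathcal{Q}^\circ(\mathbf{p},t)$ by the definition \eqref{Eq:GenFunct} of the disconnected complex-origami series. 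At $\alpha=2$, Theorem \ref{Prop:SumZonal} (formula \eqref{Eq:ZonalHurw}) gives $\sum_{\rho\vdash n}J^{(2)}_\rho=\sum_{\rho\vdash n}Z_\rho=\frac{1}{2^n n!}\sum_{\lambda\vdash n}\mathcal{O}^\circ_\Real(\lambda)\,p_\lambda$, hence $c_\mu(2)=\frac{1}{2^n n!}\mathcal{O}^\circ_\Real(\mu)$ and $\mathcal{J}_2(\mathbf{p},t)=\mathcal{P}^\circ(\mathbf{p},t)$ by the definition \eqref{Eq:GenFunctR} of the disconnected real-origami series.

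Finally I would pass to connected counts. A disconnected origami (complex or real) is an unordered multiset of connected ones covering the torus, so the exponential formula yields $\log\mathcal{Q}^\circ(\mathbf{p},t)=\mathcal{Q}(\mathbf{p},t)$ and $\log\mathcal{P}^\circ(\mathbf{p},t)=\mathcal{P}(\mathbf{p},t)$, the generating functions \eqref{Eq:GenFunctConn} and \eqref{Eq:GenFunctConnR} whose $p_\lambda$-coefficients are the weighted connected numbers $h_{\lambda,n}$ and (for real origami) the weighted connected counts --- exactly the step already recorded after \eqref{Eq:GenFunct} and after \eqref{Eq:GenFunctR}. Thus $\log\mathcal{J}_\alpha$ specializes to the connected complex series at $\alpha=1$ and to the connected real series at $\alpha=2$, with power-sum coefficients that (through the $c_\mu(\alpha)$) are polynomial in $\alpha$. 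This is the precise content of the proposition: the one-parameter Jack family interpolates between the two origami enumerations, both at the disconnected level and, after taking logarithms, at the connected level.

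The argument has no hard step --- everything rests on Propositions \ref{Prop:EnumOriDisco} and \ref{Prop:SumZonal} --- but the point I would take care to pin down is the \emph{meaning} of ``interpolate'': one wants the coefficients $c_\mu(\alpha)$ to be genuine polynomials in $\alpha$ that hit the two origami values at $\alpha=1,2$, not merely two series that happen to arise from Jack functions. This needs the classical polynomiality of the $\theta^\lambda_\mu(\alpha)$ (\cite{Mac95}, Chapter VI) together with a check of normalizations --- the Jack normalization $[m_{1^n}]J^{(\alpha)}_\lambda=n!$ on one side and $z_{2\lambda}$, $|H_n|=2^n n!$ on the other --- so that the $\alpha=2$ value is exactly $\frac{1}{2^n n!}\mathcal{O}^\circ_\Real(\mu)$ and not a rescaling of it.
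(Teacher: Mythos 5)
Your proposal is correct and follows essentially the same route as the paper: identify $J^{(1)}_\lambda=d_\lambda s_\lambda$ and $J^{(2)}_\lambda=Z_\lambda$, invoke Proposition \ref{Prop:EnumOriDisco} and Theorem \ref{Prop:SumZonal} to recover $\mathcal{Q}^\circ$ and $\mathcal{P}^\circ$ as the $\alpha=1$ and $\alpha=2$ specializations of $\sum_n\sum_{\lambda\vdash n}J^{(\alpha)}_\lambda t^n$, and take logarithms for the connected counts. Your added insistence that the power-sum coefficients be genuine polynomials in $\alpha$ is a worthwhile sharpening of what ``interpolate'' means, and it is consistent with the paper's subsequent definition of $\mathcal{R}(t,\mathbf{p};b)$ in \eqref{Eq:Interpolation}.
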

    Precisely, it means that the generating functions for the numbers of real and complex origamis admit identical
    expressions in terms of the Jack polynomials that differ only by the value of the parameter $\alpha$.
	
	Let  $b := \alpha-1$ and define the following expansion in terms of
	power sums depending on the parameter $b$:
	\begin{equation}\label{Eq:Interpolation}
		\mathcal{R}(t,\mathbf{p};b):=\log\left(\sum_n\sum_{\lambda\,\vdash n} J^{(b)}_\lambda t^n\right) = 
		\sum_n\sum_{\lambda\,\vdash n}r_\lambda(b)p_\lambda t^n\,;
	\end{equation} in particular, $\mathcal{R}(t,\mathbf{p};0) = \mathcal{Q}(t,\mathbf{p})$ and 
	$\mathcal{R}(t,\mathbf{p};1) = \mathcal{P}(t,\mathbf{p})$.
	Denote by $\mathcal{R}_\mu(t,\mathbf{p};0)$ the generating function for the numbers
of complex connected origami in $\mathcal{H}(\mu_1-1,\ldots,\mu_k-1)$.
	Similarly, denote by $\mathcal{R}_\mu(t,\mathbf{p};1)$ the generating function for the numbers
of connected real origami in $\mathcal{H}(\mu_1-1,\mu_1-1,\ldots,\mu_k-1,\mu_k-1)$.
	(For example, $\mathcal{R}_{[2]}(t,\mathbf{p};1)$ is computed in Theorem \ref{Th:Genfunction}.)

	By \cite{EO2001} $\mathcal{R}_\mu(t,\mathbf{p};0)$  is a quasimodular form
	for any $\mu$. If Conjecture \ref{Conj:QMF} is true, then
	
	\begin{conjecture}
		The generating series $\mathcal{R}_\mu(t,\mathbf{p};b)$ interpolates 
		between the quasimodular forms given by the generating functions for the numbers of connected 
            complex origamis belonging
		to $\mathcal{H}(\mu_1-1,\ldots,\mu_k-1)$
		and the quantum modular forms given by the generating function for the numbers of
		connected real origamis belonging to
		$\mathcal{H}(\mu_1-1,\mu_1-1,\ldots,\mu_k-1,\mu_k-1)$.
	\end{conjecture}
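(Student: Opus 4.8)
The plan is to deduce the statement from Conjecture \ref{Conj:QMF} together with the combinatorial identities of Sections 2 and 3, so that the argument reduces to isolating the two endpoint specializations $b=0$ and $b=1$ of the one-parameter family $\mathcal R_\mu(t,\mathbf p;b)$ and identifying each with a known generating function. The first step is to record that the family $b\mapsto\mathcal R(t,\mathbf p;b)$ is \emph{uniformly graded}: for each $n$ the coefficient of $t^n$ in $\sum_{\lambda\vdash n}J^{(b)}_\lambda$ is a homogeneous symmetric function of degree $n$ independently of $b$, and passing to the logarithm preserves this grading; consequently the operation ``take the coefficient of the monomial $p_{\mu\cup[1^{\,n-|\mu|}]}$ in the $t^n$-term'' is one and the same $\mathbb Q$-linear projection for every value of $b$, and one \emph{defines} $\mathcal R_\mu(t,\mathbf p;b)$ as its image (a series in $t$, converted to a $q$-series by $t\mapsto q$). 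This is exactly what gives the word ``interpolate'' its meaning: a single, $b$-independent extraction is applied across the whole family.

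Next I would evaluate the two ends. At $b=0$ the endpoint identity $\mathcal R(t,\mathbf p;0)=\mathcal Q(t,\mathbf p)$ (via $J^{(1)}_\lambda=d_\lambda s_\lambda$ and Proposition \ref{Prop:EnumOriDisco}) shows that the coefficient extracted above is the weighted count of connected complex origami of degree $n$ whose commutator has cycle type $\mu\cup[1^{\,n-|\mu|}]$; by Riemann--Hurwitz, $2g-2=\sum_i(\mu_i-1)$, so these are precisely the connected torus covers in the stratum $\mathcal H(\mu_1-1,\dots,\mu_k-1)$, and the resulting $q$-series is the Eskin--Okounkov generating function, quasimodular of weight $2g$ by \cite{EO2001}. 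At $b=1$ the endpoint identity $\mathcal R(t,\mathbf p;1)=\mathcal P(t,\mathbf p)$ (via $J^{(2)}_\lambda=Z_\lambda$ and Theorem \ref{Prop:SumZonal}) shows that the coefficient of the \emph{same} monomial $p_{\mu\cup[1^{\,n-|\mu|}]}$ now counts connected real origami of degree $2n$, whose ramification profile over $0$ is $[\mu_1^2\cdots\mu_k^2\,1^{\,2(n-|\mu|)}]$ by Remark \ref{Rem:Profil}, i.e. the real origami in $\mathcal H(\mu_1-1,\mu_1-1,\dots,\mu_k-1,\mu_k-1)$; by Conjecture \ref{Conj:QMF} its generating function is a quantum modular form. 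Comparing the two computations, $\mathcal R_\mu(t,\mathbf p;b)$ specializes to the quasimodular complex-stratum series at $b=0$ and to the quantum modular real-stratum series at $b=1$, which is the assertion.

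The main obstacle is that the whole difficulty has been pushed into Conjecture \ref{Conj:QMF}; modulo it, the only thing to be verified carefully is the bookkeeping of the previous paragraph --- that the single projection ``keep $p_{\mu\cup[1^{\ast}]}$'' really does land in $\mathcal H(\mu_1-1,\dots)$ at $b=0$ and in $\mathcal H(\mu_1-1,\mu_1-1,\dots)$ at $b=1$, and that the two padding conventions (adjoining fixed points on the complex side versus doubling the nontrivial profile and adjoining pairs of fixed points on the real side) are indeed both encoded by the same index partition $\mu\cup[1^{\,n-|\mu|}]\vdash n$. For an unconditional proof one would have to remove the dependence on Conjecture \ref{Conj:QMF}, presumably by establishing the ``real'' analogue of the Bloch--Okounkov theorem (Theorem \ref{Th:OkBlo}) --- that $q$-brackets of $2$-shifted symmetric functions are quantum modular forms --- and applying it to $\nu_\Real(\lambda)$ and its products, in exact parallel with the way \cite{EO2001} underlies the $b=0$ end; that analogue is Conjecture \ref{Conj:QMF} for these strata in another guise, and proving it is where the real work lies.
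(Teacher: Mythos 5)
Your argument matches the paper's own (implicit) reasoning: the statement is presented there as a conjecture conditional on Conjecture \ref{Conj:QMF}, with the $b=0$ endpoint identified with the connected complex-origami generating function (quasimodular by \cite{EO2001}) and the $b=1$ endpoint with the connected real-origami generating function, via the Jack specializations $J^{(1)}_\lambda=d_\lambda s_\lambda$ and $J^{(2)}_\lambda=Z_\lambda$ together with Proposition \ref{Prop:EnumOriDisco} and Theorem \ref{Prop:SumZonal}. Your bookkeeping of the $b$-independent projection onto the coefficient of $p_{\mu\cup[1^{\,n-|\mu|}]}$ and of the ramification profiles at the two ends is correct, and you rightly observe that all remaining difficulty resides in Conjecture \ref{Conj:QMF} (quantum modularity on the real side), which is exactly where the paper leaves it.
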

	
	We finish this paper with two open questions.
	The first one has the flavor of the $b-$conjecture from \cite{GJ96a} (see \cite{CD} for recent results).
	
	\begin{question}
	Is it possible to interpret the coefficients $r_\lambda(b)$ of the series $\mathcal{R} 
        (t,\mathbf{p};b)$ in terms of origami count for other values of $b$?
	\end{question}
	
	The second open question is related to integrable hierarchies. Recall that
	\begin{equation}
		\frac{1}{d_\lambda}= d_\lambda^{-1}= s_\lambda(1,0,0,\dots)
	\end{equation}
	where $d_\lambda$ is the hook product and $s_\lambda(1,0,0,\dots)$ is a
	specialization of the Schur function. 
	
	The following facts are well known:
	\begin{proposition}
		The series
		\begin{equation}\label{Eq:KP}
			\tau_{-1}(\mathbf{p}) = \sum_\lambda d_\lambda^{-1} s_\lambda(\mathbf{p})
		\end{equation}
		is a tau function of the KP (Kadomtsev--Petviashvili) integrable hierarchy, see \cite{Ok2000},
        while the series
				\begin{equation}\label{Eq:BKP}
		\tau_{0}(\mathbf{p}) =	\sum_\lambda d_\lambda^{0} s_\lambda(\mathbf{p})
		\end{equation}
		is a tau function of the large BKP 
        integrable hierarchy, see \cite{KvdL98} and  \cite{NO2017}, Section 5.2.
	\end{proposition}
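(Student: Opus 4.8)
The plan is to reduce each claim to a closed-form evaluation of the corresponding sum of Schur functions and then to invoke the standard boson--fermion description of the KP and (large) BKP tau functions; no ingredient beyond \cite{Ok2000,KvdL98,NO2017} is needed.

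For the KP assertion, the first step is to recognise $d_\lambda^{-1}$ as a specialisation of $s_\lambda$. From the expansion $s_\lambda=\sum_{\mu\vdash n}z_\mu^{-1}\chi^\lambda(\mu)\,p_\mu$, setting $p_1=1$ and $p_i=0$ for $i\ge2$ leaves only the term $\mu=[1^n]$ and yields $z_{[1^n]}^{-1}\chi^\lambda(1)=\chi^\lambda(1)/n!=d_\lambda^{-1}$. Introducing an auxiliary set of power sums and applying the Cauchy identity $\sum_\lambda s_\lambda(\tilde{\mathbf p})s_\lambda(\mathbf p)=\exp\bigl(\sum_{k\ge1}\tilde p_k p_k/k\bigr)$ at $\tilde p_1=1$, $\tilde p_{\ge2}=0$, I would then get
\begin{equation}
\tau_{-1}(\mathbf p)=\sum_\lambda d_\lambda^{-1}s_\lambda(\mathbf p)=e^{p_1}.
\end{equation}
Since $p_1$ is (up to normalisation) the first KP time, $e^{p_1}$ is the exponential of a linear form in the times; every Hirota bilinear differential polynomial of the KP hierarchy vanishes at the origin, so it annihilates $\tau\cdot\tau$ when $\tau$ depends on $t_1$ alone and exponentially. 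Thus $\tau_{-1}$ is a KP tau function, which is the degenerate (trivial-weight) case of Okounkov's hypergeometric tau functions \cite{Ok2000}.

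For the BKP assertion, $d_\lambda^{0}=1$, so the series is the sum of all Schur functions, $\tau_0(\mathbf p)=\sum_\lambda s_\lambda(\mathbf p)$. Here I would invoke Littlewood's identity $\sum_\lambda s_\lambda(x)=\prod_i(1-x_i)^{-1}\prod_{i<j}(1-x_ix_j)^{-1}$, which in power-sum coordinates gives
\begin{equation}
\tau_0(\mathbf p)=\exp\Bigl(\sum_{k\ge1}\tfrac{p_k}{k}+\sum_{k\ge1}\tfrac{p_k^2-p_{2k}}{2k}\Bigr).
\end{equation}
This is precisely the shape of a group element acting on the vacuum for the large (charged) BKP hierarchy, and the statement that $\sum_\lambda s_\lambda$ is a tau function of that hierarchy is established in \cite{KvdL98} and recalled in \cite{NO2017}, Section~5.2; at the fermionic level one checks that $\sum_\lambda|\lambda\rangle$ lies on the BKP orbit of the vacuum.

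The one point requiring care rather than real work is the identification of the hierarchy in the second part: ``BKP'' names several inequivalent systems, and it is the \emph{large}/charged BKP of Kac--van de Leur, associated with $\mathfrak{gl}_\infty$ carrying an involution and whose natural Schur-type basis is $\{s_\lambda\}$ --- not the DJKM ``small'' BKP, whose tau functions expand in Schur $Q$-functions --- for which $\tau_0$ above is a tau function. The main obstacle, such as it is, is to match the conventions of \cite{KvdL98,NO2017} against the normalisation of $\tau_0$ displayed above; once that bookkeeping is done, both statements follow at once.
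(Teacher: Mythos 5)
Your proposal is correct, and it actually does more than the paper, which states this proposition as ``well known'' and offers no proof beyond the citations to \cite{Ok2000}, \cite{KvdL98} and \cite{NO2017}. Your KP half is a genuine, self-contained argument: the identification $d_\lambda^{-1}=s_\lambda|_{p_1=1,\,p_{\ge2}=0}$ (consistent with the paper's own remark that $d_\lambda^{-1}=s_\lambda(1,0,0,\dots)$) together with the Cauchy identity gives $\tau_{-1}=e^{p_1}$, and an exponential of a linear form in the times satisfies the Hirota bilinear identity trivially, so the KP claim is settled without any appeal to Okounkov's machinery --- this is the degenerate case of his hypergeometric tau functions, as you say. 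For the BKP half, your Littlewood evaluation $\tau_0=\exp\bigl(\sum_k p_k/k+\sum_k(p_k^2-p_{2k})/(2k)\bigr)$ is correct, but note that at this point you have not \emph{proved} membership in the large BKP hierarchy; you have put $\tau_0$ in a recognizable normal form and then deferred the actual orbit statement to \cite{KvdL98} and \cite{NO2017}. That is exactly the level of rigor the paper itself adopts, so it is acceptable here, and your explicit warning that the relevant hierarchy is the charged Kac--van de Leur BKP (Schur basis) rather than the DJKM small BKP (Schur $Q$-basis) is a worthwhile clarification that the paper leaves implicit in the word ``large''.
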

	
	This raises the following natural question:
	\begin{question}
	Consider the series $\tau_{c}(\mathbf{p}) :=\sum_\lambda d_\lambda^{c} s_\lambda(\mathbf{p})$
	 for a general integer $c$. Is it related to any integrable hierarchy? (For $c=1$ this is
     the generating function for the numbers of complex origami, see Proposition \ref{Prop:EnumOriDisco}.)
	
	\end{question}

     \subsection*{Acknowledgments}
The authors thank Marvin Anas Hahn and Hannah Markwig for the detailed explanations of their results from \cite{HIL22} and \cite{HM24}. Our special thanks are to Valery Gritsenko for drawing our attention to quantum modular forms.
We also extend our gratitude to Maxim Kazarian, Maksim Karev, Sergei Lando, Alexander Orlov and Anton Zorich for stimulating discussions. 
The first author is grateful to the Steklov Mathematical Institute in St. Petersburg for its hospitality during the preparation of this work.

	\lstset{basicstyle=\ttfamily,breaklines=true,columns=flexible}
	\pagestyle{empty}
		
	\begin{landscape}
    
		\section*{Appendix A} 
        \noindent
        The first 13 polynomials $P_n$ for real origami count.
		\begin{equation}
			\begin{gathered}
				P_{1} = 
				\begin{minipage}[t]{1 \displaywidth}
					\linespread{1.2}
					\selectfont
					\begin{math}
						p_{1}
					\end{math}
				\end{minipage}
			\end{gathered}
		\end{equation}
		\begin{equation}
			\begin{gathered}
				P_{2} = 
				\begin{minipage}[t]{1 \displaywidth}
					\linespread{1.2}
					\selectfont
					\begin{math}
						\frac{3 p_{1}^{2}}{2}+p_{2}
					\end{math}
				\end{minipage}
			\end{gathered}
		\end{equation}
		\begin{equation}
			\begin{gathered}
				P_{3} = 
				\begin{minipage}[t]{1 \displaywidth}
					\linespread{1.2}
					\selectfont
					\begin{math}
						\frac{4}{3} p_{1}^{3}+3 p_{1} p_{2}+8 p_{3}
					\end{math}
				\end{minipage}
			\end{gathered}
		\end{equation}
		\begin{equation}
			\begin{gathered}
				P_{4} = 
				\begin{minipage}[t]{1 \displaywidth}
					\linespread{1.2}
					\selectfont
					\begin{math}
						\frac{7}{4} p_{1}^{4}+7 p_{1}^{2} p_{2}+26 p_{1} p_{3}+\frac{35}{2} p_{2}^{2}+36 p_{4}
					\end{math}
				\end{minipage}
			\end{gathered}
		\end{equation}
		\begin{equation}
			\begin{gathered}
				P_{5} = 
				\begin{minipage}[t]{1 \displaywidth}
					\linespread{1.2}
					\selectfont
					\begin{math}
						10 p_{1}^{3} p_{2}+74 p_{1}^{2} p_{3}+63 p_{1} p_{2}^{2}+180 p_{1} p_{4}+128 p_{2} p_{3}+360 p_{5}+\frac{6}{5} p_{1}^{5}
					\end{math}
				\end{minipage}
			\end{gathered}
		\end{equation}
		\begin{equation}
			\begin{gathered}
				P_{6} = 
				\begin{minipage}[t]{1 \displaywidth}
					\linespread{1.2}
					\selectfont
					\begin{math}
						790 p_{1} p_{2} p_{3}+19 p_{1}^{4} p_{2}+132 p_{1}^{3} p_{3}+\frac{421}{2} p_{1}^{2} p_{2}^{2}+510 p_{1}^{2} p_{4}+2072 p_{1} p_{5}+1256 p_{2} p_{4}+3364 p_{6}+2 p_{1}^{6}+\frac{292}{3} p_{2}^{3}+748 p_{3}^{2}
					\end{math}
				\end{minipage}
			\end{gathered}
		\end{equation}
		\begin{equation}
			\begin{gathered}
				P_{7} = 
				\begin{minipage}[t]{1 \displaywidth}
					\linespread{1.2}
					\selectfont
					\begin{math}
						2616 p_{1}^{2} p_{2} p_{3}+9368 p_{1} p_{2} p_{4}+21 p_{1}^{5} p_{2}+250 p_{1}^{4} p_{3}+425 p_{1}^{3} p_{2}^{2}+1232 p_{1}^{3} p_{4}+667 p_{1} p_{2}^{3}+7212 p_{1}^{2} p_{5}+4320 p_{1} p_{3}^{2}+3228 p_{2}^{2} p_{3}+23272 p_{1} p_{6}+14268 p_{2} p_{5}+12140 p_{3} p_{4}+43008 p_{7}+\frac{8}{7} p_{1}^{7}
					\end{math}
				\end{minipage}
			\end{gathered}
		\end{equation}
		\begin{equation}
			\begin{gathered}
				P_{8} = 
				\begin{minipage}[t]{1 \displaywidth}
					\linespread{1.2}
					\selectfont
					\begin{math}7152 p_{1}^{3} p_{2} p_{3}+37308 p_{1}^{2} p_{2} p_{4}+25116 p_{1} p_{2}^{2} p_{3}+111148 p_{1} p_{2} p_{5}+98212 p_{1} p_{3} p_{4}+35 p_{1}^{6} p_{2}+390 p_{1}^{5} p_{3}+\frac{1845}{2} p_{1}^{4} p_{2}^{2}+2404 p_{1}^{4} p_{4}+2773 p_{1}^{2} p_{2}^{3}+18452 p_{1}^{3} p_{5}+18018 p_{1}^{2} p_{3}^{2}+91512 p_{1}^{2} p_{6}+37360 p_{2}^{2} p_{4}+31480 p_{2} p_{3}^{2}+338616 p_{1} p_{7}+200372 p_{2} p_{6}+159096 p_{3} p_{5}+595872 p_{8}+\frac{15}{8} p_{1}^{8}+\frac{7855}{4} p_{2}^{4}+81300 p_{4}^{2}
					\end{math}
				\end{minipage}
			\end{gathered}
		\end{equation}
		\begin{equation}
			\begin{gathered}
				P_{9} = 
				\begin{minipage}[t]{1 \displaywidth}
					\linespread{1.2}
					\selectfont
					\begin{math}
						15480 p_{1}^{4} p_{2} p_{3}+109640 p_{1}^{3} p_{2} p_{4}+115154 p_{1}^{2} p_{2}^{2} p_{3}+505236 p_{1}^{2} p_{2} p_{5}+427408 p_{1}^{2} p_{3} p_{4}+322216 p_{1} p_{2}^{2} p_{4}+289148 p_{1} p_{2} p_{3}^{2}+1792892 p_{1} p_{2} p_{6}+1443088 p_{1} p_{3} p_{5}+913052 p_{2} p_{3} p_{4}+39 p_{1}^{7} p_{2}+572 p_{1}^{6} p_{3}+1523 p_{1}^{5} p_{2}^{2}+4336 p_{1}^{5} p_{4}+7985 p_{1}^{3} p_{2}^{3}+42600 p_{1}^{4} p_{5}+50336 p_{1}^{3} p_{3}^{2}+14263 p_{1} p_{2}^{4}+274412 p_{1}^{3} p_{6}+73072 p_{2}^{3} p_{3}+1514640 p_{1}^{2} p_{7}+689124 p_{1} p_{4}^{2}+552924 p_{2}^{2} p_{5}+5316368 p_{1} p_{8}+3079760 p_{2} p_{7}+2399024 p_{3} p_{6}+2176784 p_{4} p_{5}+9732480 p_{9}+\frac{13}{9} p_{1}^{9}+\frac{415568}{3} p_{3}^{3}
					\end{math}
				\end{minipage}
			\end{gathered}
		\end{equation}
		\begin{equation}
			\begin{gathered}
				P_{10} = 
				\begin{minipage}[t]{1 \displaywidth}
					\linespread{1.2}
					\selectfont
					\begin{math}
						31880 p_{1}^{5} p_{2} p_{3}+276200 p_{1}^{4} p_{2} p_{4}+381852 p_{1}^{3} p_{2}^{2} p_{3}+1667396 p_{1}^{3} p_{2} p_{5}+1438732 p_{1}^{3} p_{3} p_{4}+1612012 p_{1}^{2} p_{2}^{2} p_{4}+1435712 p_{1}^{2} p_{2} p_{3}^{2}+736416 p_{1} p_{2}^{3} p_{3}+8847740 p_{1}^{2} p_{2} p_{6}+7208984 p_{1}^{2} p_{3} p_{5}+5485904 p_{1} p_{2}^{2} p_{5}+30502912 p_{1} p_{2} p_{7}+23714864 p_{1} p_{3} p_{6}+21771640 p_{1} p_{4} p_{5}+14693000 p_{2} p_{3} p_{5}+175455360 p_{10}+56 p_{1}^{8} p_{2}+854 p_{1}^{7} p_{3}+2713 p_{1}^{6} p_{2}^{2}+7260 p_{1}^{6} p_{4}+20654 p_{1}^{4} p_{2}^{3}+82140 p_{1}^{5} p_{5}+128886 p_{1}^{4} p_{3}^{2}+\frac{149129}{2} p_{1}^{2} p_{2}^{4}+674208 p_{1}^{4} p_{6}+4998640 p_{1}^{3} p_{7}+3460860 p_{1}^{2} p_{4}^{2}+1322144 p_{1} p_{3}^{3}+1122384 p_{2}^{3} p_{4}+1581772 p_{2}^{2} p_{3}^{2}+26393184 p_{1}^{2} p_{8}+8987972 p_{2}^{2} p_{6}+6735424 p_{2} p_{4}^{2}+6003272 p_{3}^{2} p_{4}+96544960 p_{1} p_{9}+54907152 p_{2} p_{8}+41882080 p_{3} p_{7}+36556576 p_{4} p_{6}+9129800 p_{1} p_{2} p_{3} p_{4}+\frac{9}{5} p_{1}^{10}+\frac{134316}{5} p_{2}^{5}+17986368 p_{5}^{2}
					\end{math}
				\end{minipage}
			\end{gathered}
		\end{equation}
		\begin{equation}
			\begin{gathered}
				P_{11} = 
				\begin{minipage}[t]{1 \displaywidth}
					\linespread{1.2}
					\selectfont
					\begin{math}
						55 p_{1}^{9} p_{2}+1140 p_{1}^{8} p_{3}+3870 p_{1}^{7} p_{2}^{2}+11384 p_{1}^{7} p_{4}+43404 p_{1}^{5} p_{2}^{3}+153484 p_{1}^{6} p_{5}+272328 p_{1}^{5} p_{3}^{2}+264251 p_{1}^{3} p_{2}^{4}+1491220 p_{1}^{5} p_{6}+300805 p_{1} p_{2}^{5}+13705008 p_{1}^{4} p_{7}+12392112 p_{1}^{3} p_{4}^{2}+7385264 p_{1}^{2} p_{3}^{3}+2122172 p_{2}^{4} p_{3}+96366464 p_{1}^{3} p_{8}+19973912 p_{2}^{3} p_{5}+14883776 p_{2} p_{3}^{3}+528511152 p_{1}^{2} p_{9}+193616576 p_{1} p_{5}^{2}+173402912 p_{2}^{2} p_{7}+108350608 p_{3}^{2} p_{5}+101430112 p_{3} p_{4}^{2}+1918857600 p_{1} p_{10}+1076045456 p_{2} p_{9}+807949616 p_{3} p_{8}+692682192 p_{4} p_{7}+648080528 p_{5} p_{6}+3538971648 p_{11}+12584624 p_{1} p_{2}^{3} p_{4}+16850520 p_{1} p_{2}^{2} p_{3}^{2}+167234832 p_{1}^{2} p_{2} p_{7}+130549184 p_{1}^{2} p_{3} p_{6}+118145736 p_{1}^{2} p_{4} p_{5}+98793352 p_{1} p_{2}^{2} p_{6}+74388360 p_{1} p_{2} p_{4}^{2}+66080032 p_{1} p_{3}^{2} p_{4}+50078040 p_{2}^{2} p_{3} p_{4}+600663776 p_{1} p_{2} p_{8}+458233888 p_{1} p_{3} p_{7}+402046336 p_{1} p_{4} p_{6}+270568528 p_{2} p_{3} p_{6}+243473664 p_{2} p_{4} p_{5}+56242 p_{1}^{6} p_{2} p_{3}+600920 p_{1}^{5} p_{2} p_{4}+1035758 p_{1}^{4} p_{2}^{2} p_{3}+4587924 p_{1}^{4} p_{2} p_{5}+3874532 p_{1}^{4} p_{3} p_{4}+5861024 p_{1}^{3} p_{2}^{2} p_{4}+5235648 p_{1}^{3} p_{2} p_{3}^{2}+3974286 p_{1}^{2} p_{2}^{3} p_{3}+32531652 p_{1}^{3} p_{2} p_{6}+26243392 p_{1}^{3} p_{3} p_{5}+29801064 p_{1}^{2} p_{2}^{2} p_{5}+49884592 p_{1}^{2} p_{2} p_{3} p_{4}+158652640 p_{1} p_{2} p_{3} p_{5}+\frac{12}{11} p_{1}^{11}
					\end{math}
				\end{minipage}
			\end{gathered}
		\end{equation}
        
		\begin{equation}
			\begin{gathered}
				P_{12} = 
				\begin{minipage}[t]{1 \displaywidth}
					\linespread{1.2}
					\selectfont
					\begin{math}
						91 p_{1}^{10} p_{2}+1540 p_{1}^{9} p_{3}+\frac{12915}{2} p_{1}^{8} p_{2}^{2}+16998 p_{1}^{8} p_{4}+\frac{273205}{3} p_{1}^{6} p_{2}^{3}+257756 p_{1}^{7} p_{5}+555712 p_{1}^{6} p_{3}^{2}+\frac{3223801}{4} p_{1}^{4} p_{2}^{4}+2943628 p_{1}^{6} p_{6}+1816638 p_{1}^{2} p_{2}^{5}+32613208 p_{1}^{5} p_{7}+37407906 p_{1}^{4} p_{4}^{2}+\frac{88100192}{3} p_{1}^{3} p_{3}^{3}+287566896 p_{1}^{4} p_{8}+38271412 p_{2}^{4} p_{4}+66631804 p_{2}^{3} p_{3}^{2}+2104344944 p_{1}^{3} p_{9}+1162388176 p_{1}^{2} p_{5}^{2}+407596652 p_{2}^{3} p_{6}+462537480 p_{2}^{2} p_{4}^{2}+11464289616 p_{1}^{2} p_{10}+3642334416 p_{2}^{2} p_{8}+2331019824 p_{2} p_{5}^{2}+2175531184 p_{3}^{2} p_{6}+42258213120 p_{1} p_{11}+23426252496 p_{2} p_{10}+17367281152 p_{3} p_{9}+14652355776 p_{4} p_{8}+13389488640 p_{5} p_{7}+78107489280 p_{12}+2404196128 p_{1}^{2} p_{4} p_{6}+2066035248 p_{1} p_{2}^{2} p_{7}+1294670016 p_{1} p_{3}^{2} p_{5}+1207490320 p_{1} p_{3} p_{4}^{2}+972100840 p_{2}^{2} p_{3} p_{5}+815030648 p_{2} p_{3}^{2} p_{4}+12846295888 p_{1} p_{2} p_{9}+9642384528 p_{1} p_{3} p_{8}+8261151696 p_{1} p_{4} p_{7}+7758064752 p_{1} p_{5} p_{6}+5551682880 p_{2} p_{3} p_{7}+4888199952 p_{2} p_{4} p_{6}+3887608864 p_{3} p_{4} p_{5}+445351672 p_{1}^{2} p_{2} p_{4}^{2}+394132080 p_{1}^{2} p_{3}^{2} p_{4}+237898672 p_{1} p_{2}^{3} p_{5}+178529760 p_{1} p_{2} p_{3}^{3}+3586570032 p_{1}^{2} p_{2} p_{8}+2733056752 p_{1}^{2} p_{3} p_{7}+519616368 p_{1}^{3} p_{3} p_{6}+472997216 p_{1}^{3} p_{4} p_{5}+590228172 p_{1}^{2} p_{2}^{2} p_{6}+78639768 p_{1}^{4} p_{3} p_{5}+118718688 p_{1}^{3} p_{2}^{2} p_{5}+75443516 p_{1}^{2} p_{2}^{3} p_{4}+101222238 p_{1}^{2} p_{2}^{2} p_{3}^{2}+25292562 p_{1} p_{2}^{4} p_{3}+664565320 p_{1}^{3} p_{2} p_{7}+96810356 p_{1}^{4} p_{2} p_{6}+99792 p_{1}^{7} p_{2} p_{3}+1200424 p_{1}^{6} p_{2} p_{4}+2515692 p_{1}^{5} p_{2}^{2} p_{3}+10940516 p_{1}^{5} p_{2} p_{5}+9358988 p_{1}^{5} p_{3} p_{4}+17593430 p_{1}^{4} p_{2}^{2} p_{4}+15764100 p_{1}^{4} p_{2} p_{3}^{2}+15975672 p_{1}^{3} p_{2}^{3} p_{3}+199191488 p_{1}^{3} p_{2} p_{3} p_{4}+949890208 p_{1}^{2} p_{2} p_{3} p_{5}+598894168 p_{1} p_{2}^{2} p_{3} p_{4}+3236864904 p_{1} p_{2} p_{3} p_{6}+2909892120 p_{1} p_{2} p_{4} p_{5}+\frac{7}{3} p_{1}^{12}+\frac{2052388}{3} p_{2}^{6}+61638120 p_{3}^{4}+608665728 p_{4}^{3}+6554232744 p_{6}^{2}
					\end{math}
				\end{minipage}
			\end{gathered}
		\end{equation}
        \clearpage
		\begin{equation}
			\begin{gathered}
				P_{13} = 
				\begin{minipage}[t]{1 \displaywidth}
					\linespread{1.2}
					\selectfont
					\begin{math}
						643859068 p_{1}^{4} p_{2} p_{3} p_{4}+4093431816 p_{1}^{3} p_{2} p_{3} p_{5}+3874065256 p_{1}^{2} p_{2}^{2} p_{3} p_{4}+20914479904 p_{1}^{2} p_{2} p_{3} p_{6}+18842052576 p_{1}^{2} p_{2} p_{4} p_{5}+12645149408 p_{1} p_{2}^{2} p_{3} p_{5}+10553645728 p_{1} p_{2} p_{3}^{2} p_{4}+71992619712 p_{1} p_{2} p_{3} p_{7}+63054973696 p_{1} p_{2} p_{4} p_{6}+50445809472 p_{1} p_{3} p_{4} p_{5}+78 p_{1}^{11} p_{2}+1958 p_{1}^{10} p_{3}+8305 p_{1}^{9} p_{2}^{2}+24524 p_{1}^{9} p_{4}+160354 p_{1}^{7} p_{2}^{3}+423924 p_{1}^{8} p_{5}+1000824 p_{1}^{7} p_{3}^{2}+2044805 p_{1}^{5} p_{2}^{4}+5497112 p_{1}^{7} p_{6}+7777788 p_{1}^{3} p_{2}^{5}+70668936 p_{1}^{6} p_{7}+95736936 p_{1}^{5} p_{4}^{2}+95413672 p_{1}^{4} p_{3}^{3}+8293615 p_{1} p_{2}^{6}+745897520 p_{1}^{5} p_{8}+65634668 p_{2}^{5} p_{3}+6818573984 p_{1}^{4} p_{9}+4991110064 p_{1}^{3} p_{5}^{2}+783593872 p_{1} p_{3}^{4}+796611148 p_{2}^{4} p_{5}+1180544128 p_{2}^{2} p_{3}^{3}+49520174800 p_{1}^{3} p_{10}+7879041744 p_{1} p_{4}^{3}+9058317248 p_{2}^{3} p_{7}+4712743184 p_{3}^{3} p_{4}+273753243648 p_{1}^{2} p_{11}+84485536624 p_{1} p_{6}^{2}+84858380624 p_{2}^{2} p_{9}+48592010112 p_{3}^{2} p_{7}+40792326592 p_{3} p_{5}^{2}+38243343808 p_{4}^{2} p_{5}+1011362247168 p_{1} p_{12}+555386417280 p_{2} p_{11}+407551789696 p_{3} p_{10}+339652019200 p_{4} p_{9}+305715407616 p_{5} p_{8}+291371529472 p_{6} p_{7}+189776794816 p_{1} p_{4} p_{8}+173644723456 p_{1} p_{5} p_{7}+127364328048 p_{2} p_{3} p_{8}+109256967056 p_{2} p_{4} p_{7}+101982926496 p_{2} p_{5} p_{6}+85032513488 p_{3} p_{4} p_{6}+155782 p_{1}^{8} p_{2} p_{3}+2214512 p_{1}^{7} p_{2} p_{4}+5344350 p_{1}^{6} p_{2}^{2} p_{3}+23698592 p_{1}^{6} p_{2} p_{5}+19983660 p_{1}^{6} p_{3} p_{4}+45421680 p_{1}^{5} p_{2}^{2} p_{4}+40577388 p_{1}^{5} p_{2} p_{3}^{2}+51305366 p_{1}^{4} p_{2}^{3} p_{3}+251938712 p_{1}^{5} p_{2} p_{6}+203127576 p_{1}^{5} p_{3} p_{5}+384416636 p_{1}^{4} p_{2}^{2} p_{5}+324867016 p_{1}^{3} p_{2}^{3} p_{4}+434306912 p_{1}^{3} p_{2}^{2} p_{3}^{2}+164214258 p_{1}^{2} p_{2}^{4} p_{3}+2158625288 p_{1}^{4} p_{2} p_{7}+1685821112 p_{1}^{4} p_{3} p_{6}+1523696152 p_{1}^{4} p_{4} p_{5}+2549409104 p_{1}^{3} p_{2}^{2} p_{6}+1920972056 p_{1}^{3} p_{2} p_{4}^{2}+1706890192 p_{1}^{3} p_{3}^{2} p_{4}+1545765412 p_{1}^{2} p_{2}^{3} p_{5}+1149514008 p_{1}^{2} p_{2} p_{3}^{3}+487600060 p_{1} p_{2}^{4} p_{4}+867439976 p_{1} p_{2}^{3} p_{3}^{2}+15497789280 p_{1}^{3} p_{2} p_{8}+11829753456 p_{1}^{3} p_{3} p_{7}+10378483712 p_{1}^{3} p_{4} p_{6}+13411499296 p_{1}^{2} p_{2}^{2} p_{7}+8373969184 p_{1}^{2} p_{3}^{2} p_{5}+7847449728 p_{1}^{2} p_{3} p_{4}^{2}+5260464648 p_{1} p_{2}^{3} p_{6}+5933482352 p_{1} p_{2}^{2} p_{4}^{2}+2655872524 p_{2}^{3} p_{3} p_{4}+83233125264 p_{1}^{2} p_{2} p_{9}+62485128224 p_{1}^{2} p_{3} p_{8}+53613891552 p_{1}^{2} p_{4} p_{7}+50122945456 p_{1}^{2} p_{5} p_{6}+47173787152 p_{1} p_{2}^{2} p_{8}+30266786896 p_{1} p_{2} p_{5}^{2}+28040715136 p_{1} p_{3}^{2} p_{6}+21147875680 p_{2}^{2} p_{3} p_{6}+19049391216 p_{2}^{2} p_{4} p_{5}+16931248224 p_{2} p_{3}^{2} p_{5}+15881677056 p_{2} p_{3} p_{4}^{2}+303335034544 p_{1} p_{2} p_{10}+224865824832 p_{1} p_{3} p_{9}+\frac{14}{13} p_{1}^{13}+1883040929280 p_{13}
					\end{math}
				\end{minipage}
			\end{gathered}
		\end{equation}
	\end{landscape}
	\clearpage
    
	\begin{landscape} 
    \section*{Appendix B}
	\noindent	
        The first 15 polynomials $Q_n$ for complex origami count.	
		\begin{equation}
			Q_{1} = 
			\begin{minipage}[t]{1 \displaywidth}
				\begin{math}
					p_{1}
				\end{math}
			\end{minipage}
		\end{equation}
		\begin{equation}
			Q_{2} = 
			\begin{minipage}[t]{1 \displaywidth}
				\begin{math}
					\frac{3 p_{1}^{2}}{2}
				\end{math}
			\end{minipage}
		\end{equation}		
		\begin{equation}
			Q_{3} = 
			\begin{minipage}[t]{1 \displaywidth}
				\begin{math}
					\frac{4 p_{1}^{3}}{3}+3 p_{3}
				\end{math}
			\end{minipage}
		\end{equation}	
		\begin{equation}
			Q_{4} = 
			\begin{minipage}[t]{1 \displaywidth}
				\begin{math}
					\frac{7}{4} p_{1}^{4}+9 p_{1} p_{3}+7 p_{2}^{2}
				\end{math}
			\end{minipage}
		\end{equation}	
		\begin{equation}
			Q_{5} = 
			\begin{minipage}[t]{1 \displaywidth}
				\begin{math}
					\frac{6}{5} p_{1}^{5}+27 p_{1}^{2} p_{3}+24 p_{1} p_{2}^{2}+40 p_{5}
				\end{math}
			\end{minipage}
		\end{equation}
		\begin{equation}
			Q_{6} = 
			\begin{minipage}[t]{1 \displaywidth}
				\begin{math}
					2 p_{1}^{6}+45 p_{1}^{3} p_{3}+80 p_{1}^{2} p_{2}^{2}+225 p_{1} p_{5}+128 p_{2} p_{4}+\frac{189}{2} p_{3}^{2}
				\end{math}
			\end{minipage}
		\end{equation}
		\begin{equation}
			Q_{7} = 
			\begin{minipage}[t]{1 \displaywidth}
				\begin{math}
					1260 p_{7}+\frac{8}{7} p_{1}^{7}+1024 p_{1} p_{2} p_{4}+90 p_{1}^{4} p_{3}+160 p_{1}^{3} p_{2}^{2}+775 p_{1}^{2} p_{5}+486 p_{1} p_{3}^{2}+360 p_{2}^{2} p_{3}
				\end{math}
			\end{minipage}
		\end{equation}
		\begin{equation}
			Q_{8} = 
			\begin{minipage}[t]{1 \displaywidth}
				\begin{math}
					\frac{495}{2} p_{2}^{4}+4032 p_{1}^{2} p_{2} p_{4}+2700 p_{1} p_{2}^{2} p_{3}+\frac{15}{8} p_{1}^{8}+2584 p_{4}^{2}+9800 p_{1} p_{7}+5832 p_{2} p_{6}+4500 p_{3} p_{5}+135 p_{1}^{5} p_{3}+345 p_{1}^{4} p_{2}^{2}+1925 p_{1}^{3} p_{5}+\frac{4131}{2} p_{1}^{2} p_{3}^{2}
				\end{math}
			\end{minipage}
		\end{equation}
		\begin{equation}
			\begin{gathered}
				Q_{9} = 
				\begin{minipage}[t]{1 \displaywidth}
					\linespread{1.2}
					\selectfont
					\begin{math}
						72576 p_{9}+11776 p_{1}^{3} p_{2} p_{4}+12780 p_{1}^{2} p_{2}^{2} p_{3}+51840 p_{1} p_{2} p_{6}+41625 p_{1} p_{3} p_{5}+26496 p_{2} p_{3} p_{4}+\frac{13}{9} p_{1}^{9}+4232 p_{3}^{3}+201 p_{1}^{6} p_{3}+568 p_{1}^{5} p_{2}^{2}+4550 p_{1}^{4} p_{5}+5544 p_{1}^{3} p_{3}^{2}+1560 p_{1} p_{2}^{4}+43512 p_{1}^{2} p_{7}+20608 p_{1} p_{4}^{2}+16500 p_{2}^{2} p_{5}
					\end{math}
				\end{minipage}
			\end{gathered}
		\end{equation}
		\begin{equation}
			\begin{gathered}
				Q_{10} = 
				\begin{minipage}[t]{1 \displaywidth}
					\linespread{1.2}
					\selectfont
					\begin{math}
						29568 p_{1}^{4} p_{2} p_{4}+41580 p_{1}^{3} p_{2}^{2} p_{3}+252720 p_{1}^{2} p_{2} p_{6}+208125 p_{1}^{2} p_{3} p_{5}+161500 p_{1} p_{2}^{2} p_{5}+\frac{9}{5} p_{1}^{10}+138735 p_{5}^{2}+266112 p_{1} p_{2} p_{3} p_{4}+297 p_{1}^{7} p_{3}+1008 p_{1}^{6} p_{2}^{2}+8470 p_{1}^{5} p_{5}+\frac{28539}{2} p_{1}^{4} p_{3}^{2}+8340 p_{1}^{2} p_{2}^{4}+142835 p_{1}^{3} p_{7}+102976 p_{1}^{2} p_{4}^{2}+37224 p_{1} p_{3}^{3}+31488 p_{2}^{3} p_{4}+48384 p_{2}^{2} p_{3}^{2}+714420 p_{1} p_{9}+409600 p_{2} p_{8}+312669 p_{3} p_{7}+269568 p_{4} p_{6}
					\end{math}
				\end{minipage}
			\end{gathered}
		\end{equation}
		\begin{equation}
			\begin{gathered}
				Q_{11} = 
				\begin{minipage}[t]{1 \displaywidth}
					\linespread{1.2}
					\selectfont
					\begin{math}
						64512 p_{1}^{5} p_{2} p_{4}+113400 p_{1}^{4} p_{2}^{2} p_{3}+933120 p_{1}^{3} p_{2} p_{6}+756000 p_{1}^{3} p_{3} p_{5}+865500 p_{1}^{2} p_{2}^{2} p_{5}+368640 p_{1} p_{2}^{3} p_{4}+495720 p_{1} p_{2}^{2} p_{3}^{2}+4456448 p_{1} p_{2} p_{8}+3395259 p_{1} p_{3} p_{7}+2985984 p_{1} p_{4} p_{6}+2029536 p_{2} p_{3} p_{6}+6652800 p_{11}+1820800 p_{2} p_{4} p_{5}+\frac{12}{11} p_{1}^{11}+1451520 p_{1}^{2} p_{2} p_{3} p_{4}+405 p_{1}^{8} p_{3}+1440 p_{1}^{7} p_{2}^{2}+16125 p_{1}^{6} p_{5}+29646 p_{1}^{5} p_{3}^{2}+29280 p_{1}^{3} p_{2}^{4}+389305 p_{1}^{4} p_{7}+360960 p_{1}^{3} p_{4}^{2}+214110 p_{1}^{2} p_{3}^{3}+62640 p_{2}^{4} p_{3}+3892860 p_{1}^{2} p_{9}+1448425 p_{1} p_{5}^{2}+1296540 p_{2}^{2} p_{7}+812700 p_{3}^{2} p_{5}+760320 p_{3} p_{4}^{2}
					\end{math}
				\end{minipage}
			\end{gathered}
		\end{equation}
		\begin{equation}
			\begin{gathered}
				Q_{12} = 
				\begin{minipage}[t]{1 \displaywidth}
					\linespread{1.2}
					\selectfont
					\begin{math}
						\frac{67420}{3} p_{2}^{6}+\frac{1945689}{4} p_{3}^{4}+127232 p_{1}^{6} p_{2} p_{4}+273420 p_{1}^{5} p_{2}^{2} p_{3}+2757240 p_{1}^{4} p_{2} p_{6}+2256750 p_{1}^{4} p_{3} p_{5}+3429000 p_{1}^{3} p_{2}^{2} p_{5}+2202880 p_{1}^{2} p_{2}^{3} p_{4}+2971620 p_{1}^{2} p_{2}^{2} p_{3}^{2}+735480 p_{1} p_{2}^{4} p_{3}+26517504 p_{1}^{2} p_{2} p_{8}+20125917 p_{1}^{2} p_{3} p_{7}+17770752 p_{1}^{2} p_{4} p_{6}+15324260 p_{1} p_{2}^{2} p_{7}+9653850 p_{1} p_{3}^{2} p_{5}+8945280 p_{1} p_{3} p_{4}^{2}+7222500 p_{2}^{2} p_{3} p_{5}+6092928 p_{2} p_{3}^{2} p_{4}+\frac{7}{3} p_{1}^{12}+12477456 p_{6}^{2}+5765760 p_{1}^{3} p_{2} p_{3} p_{4}+24245568 p_{1} p_{2} p_{3} p_{6}+21654400 p_{1} p_{2} p_{4} p_{5}+32722380 p_{3} p_{9}+27594752 p_{4} p_{8}+25112500 p_{5} p_{7}+525 p_{1}^{9} p_{3}+2380 p_{1}^{8} p_{2}^{2}+26600 p_{1}^{7} p_{5}+\frac{121149}{2} p_{1}^{6} p_{3}^{2}+88740 p_{1}^{4} p_{2}^{4}+921739 p_{1}^{5} p_{7}+1094400 p_{1}^{4} p_{4}^{2}+841740 p_{1}^{3} p_{3}^{3}+15445080 p_{1}^{3} p_{9}+\frac{17425525}{2} p_{1}^{2} p_{5}^{2}+3066120 p_{2}^{3} p_{6}+3537536 p_{2}^{2} p_{4}^{2}+79035264 p_{1} p_{11}+44100000 p_{2} p_{10}
					\end{math}
				\end{minipage}
			\end{gathered}
		\end{equation}
		\begin{equation}
			\begin{gathered}
				Q_{13} = 
				\begin{minipage}[t]{1 \displaywidth}
					\linespread{1.2}
					\selectfont
					\begin{math}
						889574400 p_{13}+236544 p_{1}^{7} p_{2} p_{4}+582120 p_{1}^{6} p_{2}^{2} p_{3}+7185024 p_{1}^{5} p_{2} p_{6}+5816250 p_{1}^{5} p_{3} p_{5}+11089500 p_{1}^{4} p_{2}^{2} p_{5}+9461760 p_{1}^{3} p_{2}^{3} p_{4}+12668400 p_{1}^{3} p_{2}^{2} p_{3}^{2}+4817880 p_{1}^{2} p_{2}^{4} p_{3}+114163712 p_{1}^{3} p_{2} p_{8}+87106761 p_{1}^{3} p_{3} p_{7}+76640256 p_{1}^{3} p_{4} p_{6}+99496460 p_{1}^{2} p_{2}^{2} p_{7}+62185050 p_{1}^{2} p_{3}^{2} p_{5}+58216320 p_{1}^{2} p_{3} p_{4}^{2}+39268800 p_{1} p_{2}^{3} p_{6}+44394496 p_{1} p_{2}^{2} p_{4}^{2}+19975680 p_{2}^{3} p_{3} p_{4}+568800000 p_{1} p_{2} p_{10}+421694424 p_{1} p_{3} p_{9}+355827712 p_{1} p_{4} p_{8}+325752000 p_{1} p_{5} p_{7}+240205824 p_{2} p_{3} p_{8}+206260992 p_{2} p_{4} p_{7}+192456000 p_{2} p_{5} p_{6}+160621056 p_{3} p_{4} p_{6}+\frac{14}{13} p_{1}^{13}+18627840 p_{1}^{4} p_{2} p_{3} p_{4}+155434464 p_{1}^{2} p_{2} p_{3} p_{6}+140067200 p_{1}^{2} p_{2} p_{4} p_{5}+94446000 p_{1} p_{2}^{2} p_{3} p_{5}+78928128 p_{1} p_{2} p_{3}^{2} p_{4}+693 p_{1}^{10} p_{3}+3080 p_{1}^{9} p_{2}^{2}+44275 p_{1}^{8} p_{5}+108108 p_{1}^{7} p_{3}^{2}+225360 p_{1}^{5} p_{2}^{4}+1993222 p_{1}^{6} p_{7}+2762496 p_{1}^{5} p_{4}^{2}+2752695 p_{1}^{4} p_{3}^{3}+245920 p_{1} p_{2}^{6}+49878180 p_{1}^{4} p_{9}+36985775 p_{1}^{3} p_{5}^{2}+5885586 p_{1} p_{3}^{4}+6001000 p_{2}^{4} p_{5}+8881200 p_{2}^{2} p_{3}^{3}+510314112 p_{1}^{2} p_{11}+158827392 p_{1} p_{6}^{2}+160065072 p_{2}^{2} p_{9}+91846629 p_{3}^{2} p_{7}+76957875 p_{3} p_{5}^{2}+72176000 p_{4}^{2} p_{5}
					\end{math}
				\end{minipage}
			\end{gathered}
		\end{equation}
        \begin{equation}
			\begin{gathered}
				Q_{14} = 
				\begin{minipage}[t]{1 \displaywidth}
					\linespread{1.2}
					\selectfont
					\begin{math}
                    1668429476 p_{7}^{2}+3959000000 p_{1}^{2} p_{2} p_{10}+2934227916 p_{1}^{2} p_{3} p_{9}+2473000960 p_{1}^{2} p_{4} p_{8}+2267989500 p_{1}^{2} p_{5} p_{7}+2225062548 p_{1} p_{2}^{2} p_{9}+1277745462 p_{1} p_{3}^{2} p_{7}+1066416750 p_{1} p_{3} p_{5}^{2}+1001712000 p_{1} p_{4}^{2} p_{5}+969478524 p_{2}^{2} p_{3} p_{7}+840927744 p_{2}^{2} p_{4} p_{6}+748898784 p_{2} p_{3}^{2} p_{6}+412416 p_{1}^{8} p_{2} p_{4}+1172880 p_{1}^{7} p_{2}^{2} p_{3}+16562880 p_{1}^{6} p_{2} p_{6}+13554000 p_{1}^{6} p_{3} p_{5}+30841500 p_{1}^{5} p_{2}^{2} p_{5}+32993280 p_{1}^{4} p_{2}^{3} p_{4}+44292420 p_{1}^{4} p_{2}^{2} p_{3}^{2}+22402440 p_{1}^{3} p_{2}^{4} p_{3}+396886016 p_{1}^{4} p_{2} p_{8}+302522913 p_{1}^{4} p_{3} p_{7}+267245568 p_{1}^{4} p_{4} p_{6}+461080200 p_{1}^{3} p_{2}^{2} p_{7}+288584775 p_{1}^{3} p_{3}^{2} p_{5}+269170560 p_{1}^{3} p_{3} p_{4}^{2}+272937600 p_{1}^{2} p_{2}^{3} p_{6}+309372672 p_{1}^{2} p_{2}^{2} p_{4}^{2}+83637000 p_{1} p_{2}^{4} p_{5}+122778720 p_{1} p_{2}^{2} p_{3}^{3}+\frac{12}{7} p_{1}^{14}+3335307264 p_{1} p_{2} p_{3} p_{8}+2872105600 p_{1} p_{2} p_{4} p_{7}+2676888000 p_{1} p_{2} p_{5} p_{6}+2235382272 p_{1} p_{3} p_{4} p_{6}+1356192000 p_{2} p_{3} p_{4} p_{5}+51964416 p_{1}^{5} p_{2} p_{3} p_{4}+722094912 p_{1}^{3} p_{2} p_{3} p_{6}+651145600 p_{1}^{3} p_{2} p_{4} p_{5}+656734500 p_{1}^{2} p_{2}^{2} p_{3} p_{5}+550094976 p_{1}^{2} p_{2} p_{3}^{2} p_{4}+278069760 p_{1} p_{2}^{3} p_{3} p_{4}+1108344384 p_{1}^{2} p_{6}^{2}+421363712 p_{2}^{3} p_{8}+409239500 p_{2}^{2} p_{5}^{2}+210182144 p_{2} p_{4}^{3}+199624500 p_{3}^{3} p_{5}+284539392 p_{3}^{2} p_{4}^{2}+12367555200 p_{1} p_{13}+6772211712 p_{2} p_{12}+4930038036 p_{3} p_{11}+4067200000 p_{4} p_{10}+3614673600 p_{5} p_{9}+3384115200 p_{6} p_{8}+918 p_{1}^{11} p_{3}+4704 p_{1}^{10} p_{2}^{2}+66475 p_{1}^{9} p_{5}+191970 p_{1}^{8} p_{3}^{2}+533040 p_{1}^{6} p_{2}^{4}+3933041 p_{1}^{7} p_{7}+6482688 p_{1}^{6} p_{4}^{2}+7638273 p_{1}^{5} p_{3}^{3}+1720560 p_{1}^{2} p_{2}^{6}+138874500 p_{1}^{5} p_{9}+\frac{258762175}{2} p_{1}^{4} p_{5}^{2}+41211072 p_{1}^{2} p_{3}^{4}+10241280 p_{2}^{5} p_{4}+23788080 p_{2}^{4} p_{3}^{2}+2368405116 p_{1}^{3} p_{11}
                    \end{math}
				\end{minipage}
			\end{gathered}
		\end{equation}
        \raggedbottom
         \begin{equation}
			\begin{gathered}
				Q_{15} = 
				\begin{minipage}[t]{1 \displaywidth}
					\linespread{1.2}
					\selectfont
					\begin{math}
                    163459296000 p_{15}+20415628800 p_{4} p_{5} p_{6}+16605275724 p_{1}^{2} p_{2}^{2} p_{9}+9518013918 p_{1}^{2} p_{3}^{2} p_{7}+7986713625 p_{1}^{2} p_{3} p_{5}^{2}+7488768000 p_{1}^{2} p_{4}^{2} p_{5}+6294863872 p_{1} p_{2}^{3} p_{8}+6055371000 p_{1} p_{2}^{2} p_{5}^{2}+3151167488 p_{1} p_{2} p_{4}^{3}+2989955250 p_{1} p_{3}^{3} p_{5}+4215780864 p_{1} p_{3}^{2} p_{4}^{2}+2828872512 p_{2}^{3} p_{3} p_{6}+2545728000 p_{2}^{3} p_{4} p_{5}+3402135000 p_{2}^{2} p_{3}^{2} p_{5}+3186012672 p_{2}^{2} p_{3} p_{4}^{2}+1886913792 p_{2} p_{3}^{3} p_{4}+100997922816 p_{1} p_{2} p_{12}+73498522284 p_{1} p_{3} p_{11}+60646400000 p_{1} p_{4} p_{10}+53899508025 p_{1} p_{5} p_{9}+50557353984 p_{1} p_{6} p_{8}+40816080000 p_{2} p_{3} p_{10}+34008864768 p_{2} p_{4} p_{9}+30628659200 p_{2} p_{5} p_{8}+29166752160 p_{2} p_{6} p_{7}+25543213056 p_{3} p_{4} p_{8}+23348722950 p_{3} p_{5} p_{7}+98385920 p_{1}^{5} p_{2}^{3} p_{4}+131607504 p_{1}^{5} p_{2}^{2} p_{3}^{2}+83577600 p_{1}^{4} p_{2}^{4} p_{3}+1186988032 p_{1}^{5} p_{2} p_{8}+906266466 p_{1}^{5} p_{3} p_{7}+796925952 p_{1}^{5} p_{4} p_{6}+1724128700 p_{1}^{4} p_{2}^{2} p_{7}+1077211575 p_{1}^{4} p_{3}^{2} p_{5}+1009958400 p_{1}^{4} p_{3} p_{4}^{2}+1359970560 p_{1}^{3} p_{2}^{3} p_{6}+1537148928 p_{1}^{3} p_{2}^{2} p_{4}^{2}+621208000 p_{1}^{2} p_{2}^{4} p_{5}+921050640 p_{1}^{2} p_{2}^{2} p_{3}^{3}+156702720 p_{1} p_{2}^{5} p_{4}+349742880 p_{1} p_{2}^{4} p_{3}^{2}+19702720000 p_{1}^{3} p_{2} p_{10}+14601393963 p_{1}^{3} p_{3} p_{9}+12327714816 p_{1}^{3} p_{4} p_{8}+11287058125 p_{1}^{3} p_{5} p_{7}+680960 p_{1}^{9} p_{2} p_{4}+2163240 p_{1}^{8} p_{2}^{2} p_{3}+35582976 p_{1}^{7} p_{2} p_{6}+28746000 p_{1}^{7} p_{3} p_{5}+76857000 p_{1}^{6} p_{2}^{2} p_{5}+\frac{422039538}{5} p_{3}^{5}+\frac{8}{5} p_{1}^{15}+\frac{9811577245}{3} p_{5}^{3}+20187936000 p_{1} p_{2} p_{3} p_{4} p_{5}+129217536 p_{1}^{6} p_{2} p_{3} p_{4}+2691009216 p_{1}^{4} p_{2} p_{3} p_{6}+2426860800 p_{1}^{4} p_{2} p_{4} p_{5}+3274254000 p_{1}^{3} p_{2}^{2} p_{3} p_{5}+2733087744 p_{1}^{3} p_{2} p_{3}^{2} p_{4}+2071296000 p_{1}^{2} p_{2}^{3} p_{3} p_{4}+24932450304 p_{1}^{2} p_{2} p_{3} p_{8}+21398803328 p_{1}^{2} p_{2} p_{4} p_{7}+19979784000 p_{1}^{2} p_{2} p_{5} p_{6}+16649349120 p_{1}^{2} p_{3} p_{4} p_{6}+14400795024 p_{1} p_{2}^{2} p_{3} p_{7}+12611137536 p_{1} p_{2}^{2} p_{4} p_{6}+11206180224 p_{1} p_{2} p_{3}^{2} p_{6}+1062 p_{1}^{12} p_{3}+5920 p_{1}^{11} p_{2}^{2}+102235 p_{1}^{10} p_{5}+311958 p_{1}^{9} p_{3}^{2}+1117120 p_{1}^{7} p_{2}^{4}+7400323 p_{1}^{8} p_{7}+13636608 p_{1}^{7} p_{4}^{2}+19128492 p_{1}^{6} p_{3}^{3}+8468992 p_{1}^{3} p_{2}^{6}+345532149 p_{1}^{6} p_{9}+384100260 p_{1}^{5} p_{5}^{2}+202786308 p_{1}^{3} p_{3}^{4}+17624448 p_{2}^{6} p_{3}+8841342708 p_{1}^{4} p_{11}+5497694208 p_{1}^{3} p_{6}^{2}+909144432 p_{2}^{4} p_{7}+92296713600 p_{1}^{2} p_{13}+24782355199 p_{1} p_{7}^{2}+27814506192 p_{2}^{2} p_{11}+15128277255 p_{3}^{2} p_{9}+11347796736 p_{3} p_{6}^{2}+10950460416 p_{4}^{2} p_{7}
                        \end{math}
                    \end{minipage}
			\end{gathered}
		\end{equation}
	\end{landscape}
	\clearpage
	
	\begin{landscape}
		\begin{table}[h]\label{Table:Origamis(g,n)}\caption{Number of complex origami of genus $g\leq 7$  and degree $d\leq 30$} \vspace*{0cm}
			\hspace*{0cm}	\begin{tabular}{|l|l|l|l|l|l|l|l|}
				\hline
				$d\backslash g$ & 1     & 2      & 3            & 4               & 5                   & 6                    & 7                         \\ \hline
				1     & 1     &        &              &                 &                     &                      &                           \\ \hline
				2     & 3/2   &        &              &                 &                     &                      &                           \\ \hline
				3     & 4/3   & 3      &              &                 &                     &                      &                           \\ \hline
				4     & 7/4   & 16     &              &                 &                     &                      &                           \\ \hline
				5     & 6/5   & 51     & 40           &                 &                     &                      &                           \\ \hline
				6     & 2     & 125    & 895/2        &                 &                     &                      &                           \\ \hline
				7     & 8/7   & 250    & 2645         & 1260            &                     &                      &                           \\ \hline
				8     & 15/8  & 480    & 10970        & 22716           &                     &                      &                           \\ \hline
				9     & 13/9  & 769    & 36210        & 204813          & 72576               &                      &                           \\ \hline
				10    & 9/5   & 1305   & 204455/2     & 1251364         & 1844992             &                      &                           \\ \hline
				11    & 12/11 & 1845   & 252963       & 5897515         & 22898872            & 6652800              &                           \\ \hline
				12    & 7/3   & 2905   & 1153133/2    & 68997247/3      & 755203519/4         & 221042352            &                           \\ \hline
				13    & 14/13 & 3773   & 1196407      & 77420987        & 1177955068          & 3541805088           & 889574400                 \\ \hline
				14    & 12/7  & 5622   & 2376781      & 232385498       & 11951703583/2       & 37242034072          & 36804223224               \\ \hline
				15    & 8/5   & 6982   & 4375513      & 632608951       & 25793853778         & 1463867417063/5      & 2187354789568/3           \\ \hline
				16    & 31/16 & 9920   & 7893228      & 1595092400      & 97761236930         & 1850410329588        & 9404060075112             \\ \hline
				17    & 18/17 & 11700  & 13323270     & 3745811960      & 332544854429        & 9847878739864        & 89951590073796            \\ \hline
				18    & 13/6  & 16523  & 44766941/2   & 24999351608/3   & 1033399238519       & 45566512617381       & 4117963046068735/6        \\ \hline
				19    & 20/19 & 18615  & 35362698     & 17543425750     & 2969117673405       & 187597330815919      & 4376648043083560          \\ \hline
				20    & 21/10 & 25389  & 112412637/2  & 35504174989     & 31918356295471/4    & 3497439457170376/5   & 24092889582888032         \\ \hline
				21    & 32/21 & 28720  & 84383518     & 206113869715/3  & 20197837305287      & 351691666938395144/3 & 15873381482595046147/7    \\ \hline
				22    & 18/11 & 37755  & 256592113/2  & 129103300974    & 97127880876161/2    & 7607528525353639     & 1026717048398989327/2     \\ \hline
				23    & 24/23 & 41118  & 184617019    & 233615759144    & 111343300254203     & 22632300940847170    & 2051782644293906901       \\ \hline
				24    & 5/2   & 55345  & 542403593/2  & 413542969895    & 980386498607549/4   & 63524925757973979    & 45419662418857625419/6    \\ \hline
				25    & 31/25 & 58001  & 376719701    & 708449494342    & 518957184833896     & 845989628823907901/5 & 26018590762599856469      \\ \hline
				26    & 21/13 & 75495  & 1073541961/2 & 1193844359250   & 2126383631890665/2  & 430034339069982127   & 167910837473121614903/2   \\ \hline
				27    & 40/27 & 81430  & 725727194    & 5863055947867/3 & 2108534616523198    & 1047135281059367535  & 255943686574745486898     \\ \hline
				28    & 2     & 102942 & 1007386665   & 3160184047614   & 8139312351181037/2  & 2453325851086614562  & 741262551284963908481     \\ \hline
				29    & 30/29 & 106785 & 1327427217   & 4977886895985   & 7640320671017787    & 5546005695691838947  & 2048919731443074360567    \\ \hline
				30    & 12/5  & 139242 & 1807688043   & 7772645020410   & 28044834986384939/2 & 12138674333408224675 & 32565071476679640061757/6 \\ \hline
			\end{tabular}
		\end{table}
	\end{landscape}
	
	\begin{landscape}
		\begin{table}[h]\caption{Number of complex origami of genus 
				$g=8\ldots 10$ and degree $d\leq 30$} \vspace*{0cm}
			\hspace*{0cm}	\begin{tabular}{|l|l|l|l|}
				\hline
				$d\backslash g$ & 8                          & 9                            & 10                              \\ \hline
				
				15    & 163459296000               &                              &                                 \\ \hline
				16    & 8143111914624              &                              &                                 \\ \hline
				17    & 193384506243840            & 39520825344000               &                                 \\ \hline
				18    & 2973952153937904           & 2312882651411712             &                                 \\ \hline
				19    & 33724882414394000          & 64337950293173568            & 12164510040883200               \\ \hline
				20    & 303335630797746596         & 1154562366493598912          & 820634271502694400              \\ \hline
				21    & 15214878006528625536       & 26262986467481393472         & 4644631106519040000             \\ \hline
				22    & 14555613814464572932       & 158352633319758487184        & 540735568245302924352           \\ \hline
				23    & 82163396151489103668       & 1364004293197120038484       & 8150321022900364573056          \\ \hline
				24    & 415373154121461526161      & 80372287483019487329407/8    & 96705245220413325506992         \\ \hline
				25    & 1907804084050076167468     & 64813171985485528190432      & 946502130038492766756112        \\ \hline
				26    & 8053321768082407917400     & 373030655339582803158424     & 7895472549944594518993908       \\ \hline
				27    & 31539903289944316470903    & 1943368628582588135753249    & 57502313484416742556875831      \\ \hline
				28    & 808548802975291149510846/7 & 37088009364066173383333343/4 & 372451801484855535477578996     \\ \hline
				29    & 398189251690176385353627   & 40904453684459735325696824   & 2177071023099342103410800332    \\ \hline
				30    & 1299460224149740821910012  & 168200816668468081373046107  & 34860944808099359359034732131/3 \\ \hline
			\end{tabular}
			\vspace*{0.5cm}            
			\caption{Number of complex origami of genus $g=11...13$ and degree $d\leq 30$}\vspace*{0cm}
			\hspace*{0cm}	\begin{tabular}{|l|l|l|l|}
				\hline
				$d\backslash g$ & 11                               & 12                              & 13                                  \\ \hline
				22    & 355818427261727293440            &                                 &                                     \\ \hline
				23    & 12914363474580728203776          & 2154334728240414720000          &                                     \\ \hline
				24    & 300943344434082542732160         & 185155053842445131980800        &                                     \\ \hline
				25    & 5121654150794434759618880        & 7532524379814235890656256       & 1193170003333152768000000           \\ \hline
				26    & 68439945654406252402152384       & 196450713160102079377793280     & 113894735112136044221644800         \\ \hline
				27    & 752423319177440922554720240      & 3734862321209227772164887552    & 5143210219224978114733350912        \\ \hline
				28    & 7031567014599687661186104688     & 55639757000650670155352449152   & 148721581521973424183863048128      \\ \hline
				29    & 57221172732267012922115463604    & 680496602208863462334037722368  & 3130300901582416277061327970560     \\ \hline
				30    & 826142276193803767564436150163/2 & 7059375340054134070837385574896 & 154630904804283210742240067755264/3 \\ \hline
			\end{tabular}
		\end{table}
	\end{landscape}
	
	\begin{table}[h]\caption{Number of complex origami of genus $g=14,15$  and degree $d\leq 30$} \hspace*{0cm}
		\begin{tabular}{|l|l|l|}
			\hline
			$d\backslash g$ & 14                                & 15                               \\ \hline
			27    & 777776389315596582912000000       &                                  \\ \hline
			28    & 81764157739216967337428582400     &                                  \\ \hline
			29    & 4064822526732317856433918771200   & 589450799582646796969574400000   \\ \hline
			30    & 129285359717415405598641857329152 & 67753528732337683444789063680000 \\ \hline
		\end{tabular}
	\end{table}

\end{document}